\documentclass{amsart}

\usepackage{graphicx}
\usepackage{latexsym,color}
\usepackage{amssymb}
 \usepackage{amsfonts}
 \usepackage{amsmath}
\usepackage{amsthm}
\usepackage{comment}

%
%
\newtheorem{thm}{Theorem}[section]
 \newtheorem{dfn}[thm]{Definition}
 \newtheorem{lem}[thm]{Lemma}
 \newtheorem{rem}[thm]{Remark}
\newtheorem{cor}[thm]{Corollary}

\newtheorem{asm}[thm]{Assumption}
\newtheorem{prp}[thm]{Proposition}
\newtheorem{exm}[thm]{Example}

%
%
\def\theequation{\thesection.\arabic{equation}}
\numberwithin{equation}{section}
\pagenumbering{arabic}

\newcommand{\cA}{\mathcal{A}}
\newcommand{\cB}{\mathcal{B}}
\newcommand{\cC}{\mathcal{C}}

\newcommand{\cG}{\mathcal{G}}
\newcommand{\cH}{\mathcal{H}}

\newcommand{\cK}{\mathcal{K}}
\newcommand{\cL}{\mathcal{L}}
\newcommand{\cM}{\mathcal{M}}
\newcommand{\cN}{\mathcal{N}}

 \newcommand{\cQ}{\mathcal{Q}}

\newcommand{\cX}{\mathcal{X}}

\newcommand{\cZ}{\mathcal{Z}}


\newcommand{\Om}{{\Omega}}
\newcommand{\om}{{\omega}}

\newcommand{\eps}{{\epsilon}}

\def \Q{\mathbb{Q}}
\def \R{\mathbb{R}}

\def \dP{\mbox{\bf{P}}}
\def \dD{\mbox{\bf{D}}}

\def \fH{\mathfrak{H}}
\def \fK{\mathfrak{K}}
\def \fA{\mathfrak{A}}
\def \fB{\mathfrak{B}}
\def \fC{\mathfrak{C}}
\def \fD{\mathfrak{D}}
\def \fU{\mathfrak{U}}
\def \fL{\mathfrak{L}}
\def \fF{\mathfrak{F}}

\def \fM{\mathfrak{M}}

\def \fS{\mathfrak{S}}
\def \fI{\mathfrak{I}}

\def \fa{\mathfrak{a}}
\def \fb{\mathfrak{b}}
\def \fc{\mathfrak{c}}

\def \ff{\mathfrak{f}}
\def \fn{\mathfrak{n}}
\def \fz{\mathfrak{z}}

\def \fh{\mathfrak{h}}
\def \fg{\mathfrak{g}}

\def \fm{\mathfrak{m}}

\def\1{{\bf{1}}}
\def\0{{\bf{0}}}
\def\e{{\bf{e}}}

\def\p{{\prime}}
\def\pp{{\prime \prime}}
\def\ppp{{\prime \prime\prime}}

\def\reff#1{{\rm(\ref{#1})}}

\begin{document}

\title{Constrained Optimal Transport}
\thanks{ Research is partly supported by the ETH Foundation, 
the Swiss Finance Institute and a Swiss National
Foundation Grant SNF 200021-153555.}
\author{
    Ibrahim Ekren
     \address{ETH Z\"urich, Departement f\"ur Mathematik, R\"amistrasse 101, 
    CH-8092, Z\"urich, Switzerland, email
    \texttt{ibrahim.ekren@math.ethz.ch}.      }
\and H.~Mete Soner
     \address{ETH Z\"urich, Departement f\"ur Mathematik, R\"amistrasse 101, CH-8092, Z\"urich, Switzerland, and Swiss Finance Institute, email \texttt{mete.soner@math.ethz.ch}. }
}
\date{\today}

\begin{abstract}
The classical duality theory
of Kantorovich \cite{Kan} and Kellerer \cite{Kel}
for the classical optimal transport 
is generalized to an  abstract framework
and a characterization 
of the dual elements is provided.
This abstract generalization is set in a
Banach lattice $\cX$ with a order unit.
The primal problem is given
as the supremum over a convex subset of 
the positive unit sphere of
the topological dual of $\cX$
and the dual problem is defined 
on the bi-dual of $\cX$.  
These results are then applied to 
several extensions of the
classical optimal transport.
\end{abstract}
 
\subjclass[2010]{91G99, 90C34, 90C48}
\keywords{Optimal Transport, Martingale Optimal Transport, 
Convex Duality, Regularly Convex Sets, Fenchel-Moreau Theorem}
\subjclass[JEL subject classification]{G12, D53}

\maketitle \markboth{Ekren \& Soner}
{Constrained Optimal Transport}

\renewcommand{\theequation}{\arabic{section}.\arabic{equation}}
\pagenumbering{arabic}

\section{Introduction}
\label{s.introduction}

The Kantorovich relaxation \cite{Kan} of Monge's optimal
transport problem \cite{Mon} is to maximize
$$
\eta(f):= \int_{\R^d\times \R^d} f(x,y)\ \eta(dx,dy),
$$
over all probability measures $\eta$ that have
given marginals $\mu$ and $\nu$. Kantorovich 
proved that the convex dual of this problem
is given by,
$$
\dD_{ot}(f):= \inf\left\{ \mu(h) + \nu(g)\ :\ \
h(x)+g(y) \ge f(x,y)\ \forall (x,y) \in \R^d \times \R^d\ \right\}.
$$
Indeed, a standard application of the Fenchel-Moreau
theorem shows that these two problems have the 
same value when $f$ is continuous and bounded.
We refer the reader to the lecture notes
of Ambrosio \cite{Amb}, the classical books
of Rachev and R\"uschendorf \cite{RR}, Villani \cite{Vil}
and the references therein  
for more information, and to
the recent article of Zaev \cite{Z} which
provides a new approach
to duality.

The above duality can be seen
as a consequence of a pairing between
the  primal measures and a set
of dual functions.  Indeed,
let $\cQ_{ot}$ be the set of
all probability measures with given marginals
$\mu$ and $\nu$ and $\cH_{ot}$ be the set
of all continuous and bounded functions of the form,
$$
k(x,y)= (h(x)-\mu(h)) + (g(y)- \nu(h)), \quad (x,y) \in \R^d \times \R^d,
$$
for some $h, g \in \cC_b(\R^d)$.
Then, we can rewrite the dual problem 
compactly as follows,
$$
\dD_{ot}(f)= \inf\left\{ c\in \R\ :\ \exists k \in \cH_{ot}\ \
{\mbox{such that}}\ \ c+k \ge f
\ \right\}.
$$

\noindent
Moreover, the dual functions $\cH_{ot}$ and 
the primal measures $\cQ_{ot}$ are in 
duality in the sense that $\cQ_{ot}$ is the set
of all probability measures that annihilate $\cH_{ot}$.

We extend the classical optimal transport 
problem based on this duality between the dual and 
primal elements.  Namely, we start with a
with a convex subset $\cQ$ of the positive unit sphere
in the topological dual $\cX^\p$
of a Banach lattice $\cX$.
We assume that there exists a
closed subspace $\cH_\cQ \subset \cX$
such that $\cQ$ is
given as the intersection 
of $\cH_\cQ^\perp$ (the annihilator of $\cH_\cQ$) 
with the positive unit sphere
in the topological dual $\cX^\p$.
Then, a direct application of
the classical Fenchel-Moreau Theorem
yields,
\begin{align}
\label{e.xd}
\dP(f)&:= \sup_{\eta \in \cQ} \eta(f)\\
\nonumber
&=\dD(f):=
\inf\{ c \in \R\ :\ \exists \ h \in \cH_\cQ\ \ 
{\mbox{s.t.}} \ \ c \ \e + h \ge f \},
\quad \forall f \in \cX,
\end{align}
where $\e$ is a unit oder in the Banach lattice $\cX$.
In fact Corollary \ref{c.cont} proves this duality
as a consequence of a general result. 

The above result is proved by applying
convex duality to the map $\dD :\cX \to \R$.
When $\cX = C_b(\Om)$ with a given topological 
space $\Om$, one could also consider
$\dD$ as a Lipschitz, convex function on bounded
and Borel measurable functions, $\cB_b(\Om)$.  
Then, the convex dual of this problem
would be a function on the topological
dual of $\cB_b(\Om)$, namely,
the set of
bounded and finitely additive functionals, $ba(\Om)$.
Hence, to have the duality for all
bounded and measurable functions and not only for 
continuous ones,
one needs to augment the primal measures
by adding an appropriate subset of $ba(\Om)$.
Indeed, Example 8.1 of \cite{BNT} shows 
that this extension to $ba(\Om)$ is necessary 
if one fixes the dual elements $\cH_\cQ$  or equivalently
the dual problem $\dD$.  

Instead, we start with  the primal problem 
defined on the bidual given by,
$$
\dP: \fa \in \cX^\pp \ \ \to \ \ 
\dP(\fa):= \sup_{\eta \in \cQ} \fa(\eta),
$$ 
where $\cQ$ is a given closed convex set in the positive
unit sphere of $\cX^\p$. Since one may view 
$\cX$ as a subset of its bidual, this approach
includes the duality \reff{e.xd}.  Also,
with appropriate choices of $\cX$ one may embed
$\cB_b(\Om)$ as a closed subset of $\cX^\pp$.

Once $\dP$ is defined on a dual space,
the duality can be proved by direct
separation arguments.  In particular,
we prove in Theorem \ref{t.main} that
\begin{align*}
\dP(\fa)&:= \sup_{\eta \in \cQ} \fa(\eta)\\&
=\dD(\fa):=
\inf\{ c \in \R\ :\ \exists\ \fz \in \fK_\cQ\ \ 
{\mbox{s.t.}} \ \ c \ \e + \fz = \fa \},
\quad \forall \fa \in \cX^\pp,
\end{align*}
where the dual set $\fK_\cQ$ is given by,
$$
\fK_\cQ:= \{ \fz \in \cX^\pp\ :\
\fz(\eta) \le 0\ \ \forall \eta \in \cQ\ \}.
$$
Moreover, by its definition $\fK_\cQ$
is a convex cone and is weak$^*$ closed.
Yoshida \cite{Y} shows that such sets in a dual space
are {\em{regularly convex}} as defined by
by Krein \& \v{S}mulian \cite{KS}.  The defining property
of regular convexity is convex separation in the pre-dual 
(see Definition \ref{d.regular} below). 
This allows us to prove the stated duality in $\cX^\pp$
with a fixed primal set $\cQ$ in $\cX^\p$.  In particular,
we identify the dual elements {\em{without}}
augmenting $\cQ$
or equivalently without
extending $\cQ$ to $ba(\Om)$.  Additionally, Theorem \ref{t.main}
proves that on any closed subspace $\fL$
of $\cX^\pp$, the
duality can hold only with the dual set $\fL\cap \fK_\cQ$.

In the applications further characterization
of $\fK_\cQ$ is desired.
Indeed, for the classical 
optimal transport with $\cX=C_b(\Om)$
and $\Om=\R^d \times \R^d$,
Proposition \ref{p.kot} 
proves that the dual set 
$\fK_{ot}:=\fK_{\cQ_{ot}}$
is given by $\fH_{ot}+\cX^\pp_-$,
where $\fH_{ot}=(\cH_{ot}^\perp)^\perp$ is the annihilator of 
 the subspace $\cH_{ot}^\perp$.
The set $\fH_{ot}$ is also characterized as 
the sum of two natural sets.
Then, the duality in  $\cB_b(\Om)$ is proved in
Proposition \ref{p.lsonsuz} as a consequence of 
these results.

Our approach
 is quite different than the previous studies and is based 
on the notion of regular convexity 
as developed by Krein \& 
\v{S}mulian \cite{KS}. 
Indeed, as a general result
proved in Lemma \ref{l.connect},
 the dual set $\fK_\cQ$
is the weak$^*$ closure, or equivalently the
regular convex envelope of $\fH_\cQ+\cX^\pp_-$,
where $\fH_\cQ:= (\cH_\cQ^\perp)^\perp$.
Then, to prove that these two sets are
equal it suffices to show that 
$\fH_\cQ+\cX^\pp_-$ is weak$^*$ closed.
The main difficulty in proving this
or in characterizing  $\fH_\cQ$ 
emanates from the fact that sum of unbounded
regularly convex sets may not be regularly convex.
We use two results from \cite{KS}
to overcome this.  One is the classical
Krein \& 
\v{S}mulian Theorem.
It states  
that a set is regularly convex if 
and only if its intersection with all
bounded balls are  regularly convex.  Secondly,
sums of bounded regularly convex sets 
is again regularly convex.  
Therefore, in applications,
our method necessitates to
prove uniform pointwise
estimates
of the decomposition of dual elements.
Indeed, Lemma \ref{l.technical} proves 
this estimate for the optimal transport
and allows for duality results
Proposition \ref{p.kot} in $\cC_b^\pp(\Om)$
and Proposition \ref{p.lsonsuz} in $\cB_b(\Om)$.
A similar estimate
for super-martingales is obtained in 
Step 4 of the proof of Proposition \ref{p.drc}
and the inequality \reff{e.est} in
Theorem \ref{t.mot}  proves it
for the  martingale optimal transport.

In Section
\ref{s.cot}, we successfully apply this technique to an
extension of the optimal transport which
we call {\em{constrained
optimal transport}}.  
In this problem, the set of primal measures
are further constrained by specifying their actions
on a finite dimensional subset of $\cX$.  This class
of problems was also considered 
by Rachev and R\"uschendorf in  \cite{RR}[Section 4.6.3] for
lower semi-continuous functions. 
Proposition \ref{p.cot}
proves the duality for this extension
by the outlined method in the bidual
and also in $\cB_b(\Om)$.  

A motivating example of the abstract extension is the
martingale optimal transport. In this
problem,
$\cQ_{mot}$ is the set of probability
measures in $\cQ_{ot}$
that also annihilate all functions
of the form $\gamma(x)\cdot (x-y)$. 
Indeed, let $\cH_{mot}$ be the set of 
all linearly growing functions of
the form 
$$
k(x,y)=(h(x)-\mu(h))+(g(y)-\nu(g)) +\gamma(x)\cdot(x-y),
\quad
(x,y)\in \Om,
$$ 
for some linearly growing, continuous functions $h, g$ 
and a bounded, continuous vector valued function  $\gamma$.
Then, $\cQ_{mot}$ is the 
intersection of $\cH_{mot}^\perp$ with the unit
positive sphere.
This problem can also be seen as
a constrained optimal transport
but the dual set is now
enlarged with countably and not finitely many functions.

The martingale optimal transport is first introduced in 
discrete time by Beiglb\"ock, Henry-Labord\`ere and Penkner
\cite{BHLP} and in continuous time
by Galichon, Henry-Labord\`ere and Touzi \cite{GLT}.
The main motivation for this extension
comes from model-free finance or robust hedging
results of Hobson \cite{Hob} and Hobson and Neuberger 
\cite{H5}.  
Initial papers \cite{BHLP,GLT}
also prove the duality for continuous functions.
The duality is then further extended  by Dolinsky and the second author
\cite{DS,DS1,DS2} to 
the case when $\Om$
is the Skorokhod space
of c\`adl\`ag functions by discretization techniques
and later Hou and Ob{\l}{\'o}j \cite{HO} extended these by
considering further constraints.
Also recent manuscripts \cite{GTT1,GTT2}
use the S-topology of Jakubowski
in the Skorokhod space to study
the properties of
the martingale optimal transport.

Several other types of extensions 
of the martingale optimal transport duality are
studied in the literature.
Indeed, especially in financial applications,
it is needed
to relax the pointwise inequalities in the definition
of the dual problem.
The first relaxation is already given in 
the initial paper \cite{GLT} by
using the quasi-sure framework developed
in \cite{STZ,STZ1}. 
In the other types of extensions,
one keeps $\Om=\R^d \times \R^d$ 
but studies the duality for all bounded
measurable  not only
continuous or upper semi-continuous 
functions.
This problem poses interesting
new questions. 
In one space dimension,
they are analyzed thoroughly
in a recent paper by Beiglb\"ock, Nutz and Touzi
\cite{BNT}.  This paper contains, in addition to the
characterization of the dual set, 
several motivating examples and counter-examples.
A recent manuscript \cite{NS} studies 
the super-martingale couplings.

We study the problem of martingale optimal
transport in the 
Banach lattice of linearly growing
continuous functions.  In
Theorem \ref{t.mot}, we
obtain a complete characterization
of the set $\fH_{mot}=(\cH_{mot}^\perp)^\perp$ that annihilates
the primal signed measures in 
$\cH_{mot}^\perp$.
However,
the set $\fH_{mot} +\cX^\pp_-$ is not
equal to $\fK_{mot}=\fK_{\cQ_{mot}}$ as shown
in Example \ref{ex.gap}
and in Example 8.4 of \cite{BNT}.
Similarly, in Section \ref{s.mart},
we study the related problem
defined through martingale measures.
These problems are very closely related
to the convex functions and also to the 
classical result of Strassen \cite{S}.
These connections are made precise in Section
\ref{s.convex}.

In a series of papers, 
Bartl, Cheredito, Kupper
and Tangpi \cite{BCKT,CKT1,CKT}
also develop a functional analytic 
framework for duality problems
of these type.  They, however, start with
the dual problem
and characterize the 
primal measures using
semi-continuity assumptions
on the dual functional. 
In a large variety of problems,
including the financial markets with friction,
they very efficiently  obtain duality results for
upper semi-continuous functions. Another related
subject is the model-free fundamental theorem 
of asset pricing.  In recent years,
many interesting results in this direction 
have been proved \cite{ABS,BN1, BN,  milano, milano1}.
These results essentially start
with the dual elements and define the 
primal measures, $\cQ$ as their annihilators.  Then,
their main concern is to prove that 
$\cQ$ contains elements that 
are countably additive Borel measures.  In this manuscript,
we start with the set $\cQ$ as a subset 
of $\cX^\p$ and prove duality.

The paper is organized as follows.  
Section \ref{s.pre} introduces
the notations and basic results 
used in the paper.
Section \ref{s.setup} defines
the abstract problem
and introduces the regular convexity.
The duality results are proved in
the next section.
Subsection \ref{ss.main} proves the first
duality result in $\cX^\pp$
through $\fK_\cQ$
and the complete 
duality in $\cX$ is established
in the subsection \ref{ss.dualityinX}.  
The necessary and sufficient conditions
for duality with a dual space of
the form $\fH +\cX^\pp_-$ with a subspace $\fH$, 
is obtained in Theorem \ref{t.char} in subsection 
\ref{ss.lower}.
Subsection \ref{ss.factor}
proves a duality result
in the quotient spaces.
The classical optimal transport is
studied in Section \ref{s.ot}
and its extension to
constrained optimal transport in Section \ref{s.cot}.
Section \ref{s.mart} defines
and characterizes martingale measures.
These results are used in Section \ref{s.mot}
to study the multi-dimensional
martingale optimal transport.
Final section states results for the convex functions
defined on the bidual.

\section{Preliminaries}
\label{s.pre}

For convex closed subsets of $X, Y \subset \R^d$, we denote $\Omega:=X \times Y$,
 and for
a Banach lattice $\cX$, we use the 
following standard notations for which
we refer to the 
classical books of Aliprantis \& Border \cite{AB}
and Yoshida \cite{Y} or
to the lecture notes by Kaplan \cite{Kap}.

\begin{itemize}
\item $\cX^\prime$ is the topological dual of $\cX$
and $\cX^\pp$ is its bidual,
\item $\cB_b(\Omega)$ is the Banach space
of bounded real valued  
Borel measurable functions with the supremum norm,
\item $C(\Om)$ is the set of all continuous real-valued  functions,
\item $C_b(\Om)$ is the Banach lattice of all 
continuous real-valued bounded functions
with the supremum norm.
\end{itemize}
    
For $h\in C_b(X)$ and 
$g \in C_b(Y)$, we set
$$
(h \oplus g )(x,y):= h(x)+g(y), \quad
\forall \om=(x,y) \in \Om.
$$
For $\eta \in \left(C_b(\Om)\right)^\p$, its {\em{marginals,}}
$\eta_x \in \left(C_b(X)\right)^\p$ and 
$\eta_y \in \left(C_b(Y)\right)^\p$ are given by
$$
\eta_x(h):= \eta\left(h \oplus{\bf{0}}\right),\quad
\eta_y(g):= \eta\left({\bf{0}}\oplus g\right),
$$
where ${\bf{0}}$ is the constant function
identically equal to zero.

The Banach space $\cX$ is embedded 
into $\cX^{\prime \prime}$ by the canonical mapping,
$$
x \in \cX\ \mapsto \ \fI(x) \in \cX^{\prime\prime}
\quad
{\mbox{where}}
\quad
\fI(x)(x^\prime):= x^\prime(x), \ \forall x^\prime \in \cX^\prime.
$$
 Clearly this notation of $\fI$ depends on the 
 underlying space and we suppress this
 dependence in our notation.  
 
On $\cX^\prime$, we use the order 
induced by the order of $\cX$.
Let $\cX^\prime_+$ be the set of all
positive elements in $\cX^\prime$, i.e.,
$\eta \in \cX^\prime_+$ if
$ \eta(f) \ge 0$ for every
$f \in \cX$ and $f \ge 0$.
We define $\cX^\prime_-$ similarly.
On $\cX^{\prime\prime}$ 
we use the order induced by $\cX^\prime$.

We always assume that the Banach lattice $\cX$
is an $AM$-space endowed with the lattice norm 
induced by an order unit ${\bf{e}} \in \cX_+$, i.e.,
\begin{equation}
\label{e.norm}
\| f \|_\cX = \inf \left\{ c \in \R\ : \
-{\bf{c}} \le f \le {\bf{c}} \ \right\}, \quad
{\mbox{where}} \quad 
{\bf{c}}:= c\ {\bf{e}}.
\end{equation}
Then, the bidual $\cX^\pp$ is also
an $AM$-space with $\fI({\bf{e}})$
as its order unit; \cite{AB}[Theorem 9.31].  
Moreover,
\begin{equation}
\label{e.dnorm}
\| \fa \|_{\cX^\pp} = \inf \left\{ c \in \R\ : \
-{\bf{c}} \le f \le {\bf{c}} \ \right\}, \quad
{\mbox{where}} \quad 
{\bf{c}}:= c\ \fI({\bf{e}}).
\end{equation}
We also have,
\begin{equation}
\label{e.norm2}
 \eta({\bf{e}}) = \|\eta^+\|_{\cX^\prime} -\|\eta^-\|_{\cX^\prime},
 \quad \forall \ \eta \in \cX^\p.
\end{equation}

We denote the
unit ball in $\cX$ by 
$B_1$  and set $B_+:= B_1 \cap \cX_+$.
Similarly, we let $B_1^\prime$ and $B_1^{\prime\prime}$
be the unit balls
in $\cX^\prime$ and  in $\cX^{\prime\prime}$, respectively.
We set $B_+^{\prime} := B_1^\prime \cap \cX_+$,
$B_+^{\prime\prime} := B_1^{\prime\prime} \cap \cX_+$.

For a given 
a subset $A$ of a Banach space $\cZ$
and a subset $\Theta \subset \cZ^\prime$,
the annihilator of $A$ and the pre-annihilator 
of $\Theta$ are given by, 
$$A^\perp := \left\{\eta \in \cZ^{\p} \ :\
\eta(a)=0, \ \  \forall a \in A\ \right\},\quad
\Theta_\perp := \left\{a \in \cZ \ :\
\eta(a)=0, \ \  \forall\eta \in \Theta\ \right\}.
$$
It is clear that $A^\perp$ is weak$^*$
closed in $\cZ^\p$ and $\Theta_\perp$
is weakly closed in $\cZ$.

For a scalar $c$, $\bf{c}$  denotes the 
function  equal to $c\ {\bf{e}}$.
Clearly, this notation depends on the order unit
but with an abuse of notation, we use the same notation
in all domains.

Throughout the paper,  we mostly use
the Banach lattice $C_b(\Om)$  or 
the space $C_\ell(\Om)$ of linearly
growing continuous functions defined by,
$$
C_\ell(\Om):= \left\{ \ f \in C(\Om)\ :\
\| f\|_\ell < \infty\ \right\},
$$
where, with  $\ell_X(x):= 1+|x|$,  $\ell_Y(y):= 1+|y|$
and $\ell(x,y)= \ell_X(x)+ \ell_Y(y)$, 
\begin{equation}
\label{e.lnorm}
\| f\|_\ell := \sup_{ \om \in \Om} \ \frac{|f(\om)|}{\ell(\om)}.
\end{equation}
To simplify the presentation, we denote
$$
\cC_b:=C_b(\Om) \quad
{\mbox{and}}
\quad
\cC_\ell:=C_\ell(\Om).
$$
It is clear that both of these spaces are $AM$-spaces 
and the order unit in $\cC_b$ is ${\bf{e}} \equiv 1$.  The space $\cC_\ell$
has the order unit ${\bf{e}}(x,y)= \ell(x,y)$.  Moreover, 
the weighted spaces $\cC_{\ell_X}(X)$ and
$\cC_{\ell_Y}(Y)$ are also $AM$-spaces
with order units $\ell_X$ and $\ell_Y$, respectively.

We also use the notation
$\cC:= C(\Omega)$
and view it as a Frechet space.
Then,
$\cC^\p$ is equal to $ca_{r,c}(\Om)$,
all countably additive, regular measures
that are compactly supported. 
\vspace{2pt}

\section{Abstract Problem}
\label{s.setup}
Let $\cX$ be a Banach lattice with
 a lattice norm
given by \reff{e.norm} and with an
order unit ${\bf{e}}$.  Recall the notation,
${\bf{c}} := c \ {\bf{e}}$ and with an abuse
of notation, we use the same notation in the bidual as well.
Let $\partial B^\p_1$ be the unit sphere in $\cX^\p$.  In view of
\reff{e.norm2}, we have
$$
\partial B^\p_+ := \partial  B^\p_1 \cap \cX^\p_+
= \left\{ \eta \in \cX^\p_+\ :\ 
\eta({\bf{e}})=1\ \right\}.
$$
The starting point of our analysis is a 
closed convex set $\cQ \subset \partial B^\p_+$.
We make the following standing assumption.
\begin{asm}
\label{a.main}
We assume that $\cQ$
is a non-empty, closed, convex 
subset of $\cX^\p$
and that there exists a closed subspace $\cH_{\cQ}\subset \cX$ 
such that 
$$
\cQ=\cH_\cQ^\perp \ \cap  \partial B^\p_+.
$$ 
\end{asm}
Set
$$
\cA_\cQ:= \cH_\cQ^\perp,
\qquad
\cC_\cQ:= \left\{ \lambda \eta\ :\ \eta \in \cQ,\ \ \lambda \ge 0\ 
\right\} = \cH_\cQ^\perp \cap \cX^\p_+.
$$
Note that the closed linear span of $\cQ$
is equal to  $\cC_\cQ-\cC_\cQ$
and is always a subset of $\cA_\cQ$.
But in general
this inclusion could be strict.
\vspace{2pt}

\subsection{Definitions}
\label{ss.def}
Given $\cQ$, 
the {\em{constrained optimal transport}} is given by,
$$
\dP(\fa; \cQ):= \sup_{\eta \in \cQ}\ \fa(\eta),
\quad
\fa \in \cX^{\prime\prime}.
$$

On the dual side,
we start with a cone $\fK \subset \cX^\pp$
satisfying
\begin{equation}
\label{e.cond}
\fz \in \fK\ \  {\mbox{and}} \ \ 
\fn \in \cX^{\prime\prime}_-\ \ 
\Rightarrow
\quad 
\fz +\fn \in \fK .
\end{equation}
Then, the {\em{dual constrained optimal
transport}} problem is defined by,
$$
\dD(\fa; \fK):= \inf \left\{ \ c \in  \R\ : \
\exists\ \fz \in \fK\ \ 
{\mbox{such that}}\ \ 
 {\bf{c}}+ \fz = \fa\ \right\},\quad
\fa \in \cX^{\prime\prime}.
$$
We always use the convention that
the infimum over an empty set is plus infinity.

The chief concern of this paper
is to relate these two problems.
In particular, the following sets
are relevant,
\begin{align}\label{eq:defh}\fH_\cQ:= \cA_\cQ^\perp, \qquad
\fK_{\cQ}:= \left\{ \fz \in \cX^{\prime\prime}\ :\
\fz(\eta) \le 0, \ \ \forall \  \eta \in \cQ\ \right\}.
\end{align}
It is then immediate that
$\fK_{\cQ}= \left\{ \fz \in \cX^{\prime\prime}\ :\
\fz(\eta) \le 0, \ \ \forall \  \eta \in \cC_\cQ\ \right\}.$

\begin{rem}
\label{r.kh}
{\rm{These two sets are closely related
to each other as we always have the inclusion,
$\fH_\cQ + \cX^{\prime\prime}_- \subset \fK_{\cQ}$.
We also show in Lemma \ref{l.connect}
that the weak$^*$ closure of $\fH_\cQ + \cX^{\prime\prime}_-$
is equal to $\fK_\cQ$.
On the other hand
this inclusion might be strict
as shown in Example \ref{ex.gap} below.
Indeed, $\fK_{\cQ}$
is always weak$^*$ closed, while $\fH_\cQ + \cX^{\pp}_- $
may not even be strongly closed.  
\qed
}}
\end{rem}

\subsection{Properties of $\dD$}
\label{ss.properties}

We prove several easy properties of $\dD$
for future reference.  Let $\overline \fK$ be the closure of
$\fK$ under the strong topology of $\cX^{\prime\prime}$.

\begin{lem}
\label{l.properties} Suppose that
$\fK$ satisfies \reff{e.cond}.  Then,
for every $\fa \in \cX^\pp$,
$$
\dD(\fa;\fK) =\dD(\fa;\overline{\fK})\le \|\fa\|_{\cX^\pp}.
$$
Moreover, if $\dD(\fa;\fK)>-\infty$,
then there exists $\fz_\fa \in \overline{\fK}$
satisfying,
$$
\fa= \fz_\fa + \dD(\fa;\overline{\fK})\ \e.
$$
\end{lem}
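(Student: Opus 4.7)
The plan is to derive both assertions from the order-unit representation \reff{e.dnorm} of the bidual norm combined with the one-sided stability condition \reff{e.cond}. A preliminary observation is that, since $\fK$ is a cone, $0 \in \fK$, so \reff{e.cond} forces $\cX^\pp_- \subset \fK$.

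For the upper bound, I would set $c := \|\fa\|_{\cX^\pp}$; then \reff{e.dnorm} gives $\fa - c\,\e \le 0$, so $\fa - c\,\e \in \cX^\pp_- \subset \fK$, showing that $c$ is admissible for $\dD(\fa;\fK)$. For the equality $\dD(\fa;\fK)=\dD(\fa;\overline{\fK})$, the nontrivial direction is $\dD(\fa;\fK)\le \dD(\fa;\overline{\fK})$. I would fix any $c$ admissible for the right-hand side, so that $\fz := \fa - c\,\e \in \overline{\fK}$, and pick $\fz_n \in \fK$ with $\epsilon_n := \|\fz_n - \fz\|_{\cX^\pp}\to 0$. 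By \reff{e.dnorm}, $\fz-\fz_n \le \epsilon_n \e$, and
$$
\fa - (c+\epsilon_n)\,\e \;=\; \fz_n + \bigl((\fz-\fz_n) - \epsilon_n\,\e\bigr),
$$
where the bracketed term lies in $\cX^\pp_-$. Hence \reff{e.cond} yields $\fa-(c+\epsilon_n)\,\e \in \fK$, so $c+\epsilon_n$ is admissible for $\dD(\fa;\fK)$. Letting $n\to\infty$ gives the desired inequality.

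For the attainment claim, assuming $c^* := \dD(\fa;\overline{\fK})>-\infty$, I would take $c_n \searrow c^*$ with corresponding $\fz_n \in \fK \subset \overline{\fK}$ satisfying $\fa = c_n\,\e + \fz_n$. Then $\fz_n = \fa - c_n\,\e$ converges in norm to $\fz_\fa := \fa - c^*\,\e$; strong closedness of $\overline{\fK}$ gives $\fz_\fa \in \overline{\fK}$, and the identity $\fa = \fz_\fa + c^*\,\e$ holds by construction.

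The statement is essentially bookkeeping, so there is no deep obstacle. The only step requiring care is the approximation argument in the second paragraph, where one must convert a mere norm approximation by elements of $\fK$ into an exact membership in $\fK$ by absorbing the error into the scalar $c$ and then invoking the one-sided stability \reff{e.cond}.
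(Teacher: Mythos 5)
Your proof is correct and follows essentially the same strategy as the paper's: the upper bound via $\fa \le \|\fa\|_{\cX^\pp}\e$ and $\cX^\pp_-\subset\fK$, the equality $\dD(\cdot;\fK)=\dD(\cdot;\overline{\fK})$ by absorbing the approximation error $\|\fz-\fz_n\|$ into the scalar and invoking \reff{e.cond}, and attainment by noting that the admissible $\fz$'s differ by multiples of $\e$ and hence converge in norm. The only cosmetic difference is that the paper phrases the attainment step as a Cauchy-sequence argument rather than directly identifying the limit as $\fa-c^*\e$; the content is identical.
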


\begin{proof}  Fix $\fa \in \cX^\pp$.
By \reff{e.dnorm},
$\|\fa\|_{\cX^\pp}\ \e \ge \fa$.  Since $\cX^\pp_-\subset \fK$,
$\dD(\fa;\fK) \le \|\fa\|_{\cX^\pp}$. 

Let $(c,\fz)\in \R\times \overline{\fK}$ be such that
$\fa=c\ \e+ \fz$.  Let $\{\fz_n\}_n \subset \fK$ be 
a sequence that converges to $\fz$.
In view of \reff{e.dnorm}, 
$$
\fa= c\  \e + \fz \le \left[c + \|\fz-\fz_n\|_{\cX^\pp}\right]\e
+ \fz_n.
$$
Hence, by \reff{e.cond},
$$
\fn_n:= \fa -\left[c + \|\fz-\fz_n\|_{\cX^\pp}\right]\e- \fz_n \in \cX^\pp_-
\quad
\Rightarrow
\quad
\fz_n+\fn_n \in \fK.
$$
Moreover, 
$$
\fa=  \left[c + \|\fz-\fz_n\|_{\cX^\pp}\right]\e
+\left[ \fz_n +\fn_n\right]
\quad
\Rightarrow
\quad
\dD(\fa;\fK) \le c + \|\fz-\fz_n\|_{\cX^\pp}.
$$
Since above holds for every pair $(c,\fz) \in \R\times \overline{\fK}$
satisfying $\fa=c \ \e + \fz$, we conclude that
 $\dD(\fa;\fK) \le \dD(\fa;\overline{\fK})$.  The opposite
 inequality is immediate, since $\fK \subset \overline{\fK}$.
 
Suppose that $\dD(\fa;\fK)>-\infty$.
Then, there is a sequence $(c_n,\fz_n)\in \R \times \fK$
such that $\fa= c_n \e +\fz_n$ and
$c_n$ tends to $\dD(\fa;\fK)$
as $n$ tends to infinity.  
Then, 
$$
\|\fz_n-\fz_m\|_{\cX^\pp} = |c_n-c_m|, \quad
\forall \ n, m.
$$
Hence, $\{\fz_n\}_n$ is a Cauchy sequence.
Let $\fz_\fa \in \overline{\fK}$ be its limit point.
Then, 
$$
\fa= \fz_\fa + \dD(\fa;\overline{\fK})\ \e.
$$
\end{proof}
\vspace{2pt}

\subsection{Regular Convexity}
\label{ss.regular}

The notion of regular convexity defined in  \cite{KS}
by Krein \& \v{S}mulian is useful
in this context as it allows for convex separation   
in the pre-dual. 

\begin{dfn}[Regular Convexity]
\label{d.regular} Let $\cZ$ be a Banach space.
A subset $\fA \subset \cZ^\prime$ is called
regularly convex if for any $\fb \not \in \fA$,
there exists $\eta \in \cZ$ such that
$$
\sup_{\ff \in \fA}\ \ff(\eta) < \fb(\eta).
$$
\end{dfn}
It is proved by Yoshida \cite{Y} that a set is regularly
convex if and only of it is convex and is weak$^*$ closed.
We also recall a condition for regular convexity
in Appendix at section \ref{ap.rc}. 

The sets $\fH_\cQ$ and $\fK_{\cQ}$ defined earlier  
are both weak$^*$ closed and convex.  Hence,
they are regularly convex. Moreover,
due to general facts of functional analysis and the 
relation between $\cA_\cQ$ and $\cQ$, 
we have the following connection
between  the spaces
$\fK_{\cQ}$
 and  $\fH_\cQ +\cX^{\prime\prime}_-$.

\begin{lem}
\label{l.connect} Under the Assumption \ref{a.main},
the weak$^*$ closure
of  $\fI(\cH_\cQ) +\cX^{\prime\prime}_-$
and $\fH_\cQ +\cX^{\prime\prime}_-$
are equal to $\fK_{\cQ}$.
\end{lem}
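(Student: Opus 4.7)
My plan is to sandwich both sets via the chain
$$
\text{weak$^*$-closure}(\fI(\cH_\cQ)+\cX^{\pp}_-) \ \subset\ \text{weak$^*$-closure}(\fH_\cQ+\cX^{\pp}_-)\ \subset\ \fK_\cQ\ \subset\ \text{weak$^*$-closure}(\fI(\cH_\cQ)+\cX^{\pp}_-),
$$
so all three closures coincide with $\fK_\cQ$. The first inclusion is automatic because $\fI(\cH_\cQ)\subset \fH_\cQ=\cA_\cQ^\perp$: indeed, every $h\in\cH_\cQ$ annihilates $\cA_\cQ=\cH_\cQ^\perp$. For the middle inclusion, if $\fh\in\fH_\cQ$ and $\fn\in\cX^{\pp}_-$, then for every $\eta\in\cQ\subset\cA_\cQ\cap\cX^{\p}_+$ we have $\fh(\eta)=0$ and $\fn(\eta)\le 0$, so $\fh+\fn\in\fK_\cQ$; closure is then preserved because $\fK_\cQ$ is a weak$^*$-closed intersection of half-spaces.

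The crux is the third inclusion, which I establish by contradiction using regular convexity (Definition \ref{d.regular}). Fix $\fa\in\fK_\cQ$ and suppose $\fa$ does not lie in the weak$^*$-closure of $\fI(\cH_\cQ)+\cX^{\pp}_-$. This closure is convex and weak$^*$-closed, hence regularly convex, so there exists $\eta\in\cX^{\p}$ with
$$
\sup\bigl\{\fb(\eta)\ :\ \fb\in \fI(\cH_\cQ)+\cX^{\pp}_-\bigr\}\ <\ \fa(\eta).
$$
Because $\fI(\cH_\cQ)$ is a linear subspace, finiteness of this supremum forces $\eta(h)=0$ for every $h\in\cH_\cQ$, i.e.\ $\eta\in \cH_\cQ^\perp=\cA_\cQ$. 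Because $\cX^{\pp}_-$ is a cone containing $-\fI(x)$ for every $x\in\cX_+$, the same finiteness forces $\fn(\eta)\le 0$ for all $\fn\in\cX^{\pp}_-$, equivalently $\eta(x)\ge 0$ for every $x\in\cX_+$, so $\eta\in\cX^{\p}_+$. Consequently the supremum above is attained (at $0$) and equals $0$, giving $\fa(\eta)>0$.

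To conclude, I rescale $\eta$ into $\cQ$. Since $\eta\in\cX^{\p}_+$ and $\fa(\eta)>0$, in particular $\eta\ne 0$, so identity \reff{e.norm2} yields $\eta(\e)=\|\eta\|_{\cX^\p}>0$. Setting $\tilde\eta:=\eta/\eta(\e)$ produces an element of $\cA_\cQ\cap\partial B^\p_+$, which by Assumption \ref{a.main} is exactly $\cQ$. But then the defining property of $\fK_\cQ$ gives $\fa(\tilde\eta)\le 0$, hence $\fa(\eta)\le 0$, contradicting $\fa(\eta)>0$. The only nontrivial ingredient is the separation step; everything else is bookkeeping. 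The argument is also robust in that it never asserts the strong (or weak$^*$) closedness of $\fH_\cQ+\cX^{\pp}_-$ itself, which will in fact fail in the applications of Section \ref{s.mot} and is precisely why regular convexity, rather than naive duality, is the right tool here.
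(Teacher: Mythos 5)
Your proposal is correct and follows essentially the same route as the paper: separate a hypothetical $\fa\in\fK_\cQ$ from the weak$^*$ closure of $\fI(\cH_\cQ)+\cX^{\pp}_-$ by an $\eta\in\cX^\p$, then exploit the subspace structure of $\cH_\cQ$ and the cone $\cX^{\pp}_-$ to force $\eta\in\cH_\cQ^\perp\cap\cX^\p_+$, contradicting $\fa\in\fK_\cQ$. The only cosmetic difference is that you normalize $\eta$ into $\cQ$ using \reff{e.norm2}, whereas the paper observes $\eta_0\in\cC_\cQ$ and invokes the remark that $\fK_\cQ$ is already characterized by testing against $\cC_\cQ$.
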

\begin{proof}
Let $\fK$ be the weak$^*$ closures of 
$\fI(\cH_\cQ) +\cX^{\prime\prime}_-$.
Since $\fI(\cH_\cQ) +\cX^{\prime\prime}_- \subset \fK_{\cQ}$,
we also have
$\fK \subset \fK_{\cQ}$.
For a contraposition 
argument, assume that there exists 
$\fa_0 \in \fK_{\cQ} \setminus \fK$.
Since $\fK$ is convex and weak$^*$ closed,
it is regularly convex.  Hence, there exists
$\eta_0 \in \cX^\prime$ satisfying,
\begin{equation}
\label{e.a0}
c_0:= \sup_{\fz \in \fK}\fz(\eta_0) < \fa_0(\eta_0).
\end{equation}
Since $ \fK$ is a cone and contains $0$,
we conclude that $c_0=0$. 
Therefore, $\fI(h)(\eta_0) \le 0$
for every $h  \in \cH_\cQ$.
By the fact $\cH_\cQ$ is linear,
we conclude that
$\eta_0 \in \cH_\cQ^\perp$.
Also, since $\cX^{\prime\prime}_-\subset \fK$, $\eta_0 \ge0$.  
In particular, $\eta_0 \in \cH_\cQ^\perp \cap \cX^\prime_+$ and by 
Assumption \ref{a.main}, and the 
definition $\cC_\cQ$,
we conclude that $\eta_0 \in \cC_\cQ$.
On the other hand, $\fa_0 \in \fK_{\cQ}$.
Hence, $\fa_0(\eta_0) \le 0$.  This
is in contradiction with \reff{e.a0} and the fact that
$c_0=0$.
This proves that the
weak$^*$ closure
of $\fI(\cH_\cQ) +\cX^{\prime\prime}_-$ is equal
to $\fK_\cQ$.  

Finally, since $\fH_\cQ + \cX^\pp_- \subset \fK_\cQ$,
and since $\fI(\cH_\cQ) \subset (\cH_\cQ^\perp)^\perp=\fH_\cQ$, 
we have
$$
\fI(\cH_\cQ) + \cX^\pp_-
\subset \fH_\cQ + \cX^\pp_- \subset \fK_\cQ.
$$
We have already shown that
 the weak$^*$ closure
of the smallest set above is 
equal to $\fK_\cQ$.
Hence,
 the weak$^*$ closures
are all above sets are equal to $\fK_\cQ$.
\end{proof}
\vspace{2pt}

\section{Duality}
\label{s.dual}
In this section, we prove several
duality results and also necessary and sufficient
conditions for certain types of duality.

\subsection{Main Duality}
\label{ss.main}

The following is the main duality result.

\begin{thm}[{\bf{Duality}}]
\label{t.main}  

Suppose that $\cQ$ is a convex subset 
of $\cX^\p$ satisfying Assumption \ref{a.main}.
Let $\fL$ be a strongly  closed subspace of $\cX^\pp$
containing $\e$
and $\fK \subset \fL$ be a
cone  satisfying
$$
\fz \in \fK\ \  {\mbox{and}} \ \ 
\fn \in \fL \cap \cX^{\prime\prime}_- \ \
\Rightarrow
\quad  \fz +\fn \in \fK .
$$
Then, the duality
\begin{equation}
\label{e.dual}
\dP(\fa;\cQ)= \dD(\fa;\fK),\quad
\forall \ \fa \in \fL,
\end{equation}
holds if and only if
the strong closure $\overline{\fK}$
of $\fK$ is equal to $\fK_\cQ \cap \fL$.  Moreover,
there is dual attainment in $\overline \fK$.   Namely, for every
$\fa \in \fL$ there exists $\fz_\fa \in \overline \fK$ satisfying,
\begin{equation}
\label{e.attainment}
\dD(\fa;\fK)\ \e + \fz_\fa = \fa.
\end{equation}
\end{thm}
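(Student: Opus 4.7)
The plan is to exploit Lemma \ref{l.properties}, which already provides $\dD(\fa;\fK)=\dD(\fa;\overline{\fK})$ together with attainment in $\overline{\fK}$ whenever $\dD(\fa;\fK)>-\infty$. This lets me work with $\overline{\fK}$ throughout and reduce the theorem to the equivalence of the duality on $\fL$ with the identification $\overline{\fK}=\fK_\cQ\cap\fL$. I will also record at the outset that, because $\cQ$ sits in $\partial B^\p_+$, we have $|\fa(\eta)|\le\|\fa\|_{\cX^\pp}$ for every $\eta\in\cQ$, so $\dP(\fa;\cQ)$ is a real number and (once duality is in hand) so is $\dD$, justifying the applicability of the attainment clause of Lemma \ref{l.properties}.

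For the sufficiency direction, suppose $\overline{\fK}=\fK_\cQ\cap\fL$. One inequality is immediate: any admissible pair $(c,\fz)$ for the dual problem satisfies $\fz\in\fK\subset\fK_\cQ$, so $\fa(\eta)=c+\fz(\eta)\le c$ for every $\eta\in\cQ$, whence $\dP(\fa;\cQ)\le\dD(\fa;\fK)$. For the reverse inequality I would directly exhibit a dual witness by setting $\fz_\fa:=\fa-\dP(\fa;\cQ)\,\e$: it lies in $\fL$ because $\fa,\e\in\fL$, and in $\fK_\cQ$ because $\fz_\fa(\eta)=\fa(\eta)-\dP(\fa;\cQ)\le 0$ for every $\eta\in\cQ$. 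Hence $\fz_\fa\in\fK_\cQ\cap\fL=\overline{\fK}$, which yields $\dD(\fa;\overline{\fK})\le\dP(\fa;\cQ)$ and, via Lemma \ref{l.properties}, establishes the attainment statement \reff{e.attainment}.

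For the necessity direction, assume duality holds on $\fL$, and prove the two inclusions. For $\overline{\fK}\subset\fK_\cQ\cap\fL$, take $\fz\in\fK$ and feed $\fa=\fz\in\fL$ into the duality with the admissible pair $(0,\fz)$; this gives $\dD(\fz;\fK)\le 0$, so $\sup_{\eta\in\cQ}\fz(\eta)\le 0$, i.e.\ $\fz\in\fK_\cQ$. Since $\fK_\cQ$ is weak$^\ast$ (hence strongly) closed and $\fL$ is strongly closed, passing to the closure yields $\overline{\fK}\subset\fK_\cQ\cap\fL$. For the opposite inclusion, fix $\fz\in\fK_\cQ\cap\fL$; then $\dP(\fz;\cQ)\le 0$ and duality gives $\dD(\fz;\fK)\le 0$, with finiteness following from $\dP(\fz;\cQ)\ge -\|\fz\|_{\cX^\pp}$. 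Lemma \ref{l.properties} then produces $\fz_0\in\overline{\fK}$ and $c:=\dD(\fz;\fK)\le 0$ such that $\fz=c\,\e+\fz_0$.

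The pivotal final step, and where I expect the main friction, is concluding that $\fz=\fz_0+c\,\e$ itself belongs to $\overline{\fK}$. Here I would invoke the absorption hypothesis $\fK+(\fL\cap\cX^\pp_-)\subset\fK$ and extend it from $\fK$ to $\overline{\fK}$ by continuity of translations: any strong approximants $\fz_n\in\fK$ of $\fz_0$ satisfy $\fz_n+c\,\e\in\fK$, and $\fz_n+c\,\e\to\fz_0+c\,\e$ strongly, placing the limit in $\overline{\fK}$. The inputs needed here are that $c\,\e\in\fL\cap\cX^\pp_-$ (because $c\le 0$, $\e\ge 0$, and $\e\in\fL$) and that $\fL$ is a subspace, so $c\,\e\in\fL$. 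Everything else is bookkeeping around Lemma \ref{l.properties}.
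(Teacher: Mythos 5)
Your proof is correct, and the necessity direction matches the paper's argument almost line for line (including the careful closure argument for extending the absorption hypothesis from $\fK$ to $\overline{\fK}$, which the paper states more tersely). The sufficiency direction, however, takes a genuinely cleaner route. The paper first establishes weak duality $\dP\le\dD$, then produces an attaining $\fz_\fa\in\fL_\cQ$ by extracting a limit of a Cauchy sequence of dual minimizers, and finally shows this attained minimizer satisfies $\dP(\fz_\fa;\cQ)=0$ by a separate minimality argument (the ``$c^*=0$'' step), from which duality follows. You instead observe that the explicit element $\fz_\fa:=\fa-\dP(\fa;\cQ)\e$ is an admissible dual variable: it lies in $\fL$ because $\fL$ is a subspace containing $\fa$ and $\e$, and it lies in $\fK_\cQ$ because $\fz_\fa(\eta)=\fa(\eta)-\dP(\fa;\cQ)\,\eta(\e)\le 0$ for every $\eta\in\cQ$ (using $\eta(\e)=1$). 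This single construction yields the reverse inequality and attainment simultaneously, bypassing both the Cauchy-sequence extraction and the $c^*=0$ argument. Your approach trades the paper's slightly more ``constructive optimization'' flavor for a direct exhibition of the optimal dual variable; it also makes it transparent why the hypothesis $\e\in\fL$ is indispensable.
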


\begin{proof}  
Fix $\fa \in \fL$.  Set $\fL_\cQ:= \fL \cap \fK_{\cQ}$ and
$$
\fD_\cQ := \{ (c,\fz) \in \R \times \fL_\cQ\ :\  {\bf{c}} +\fz = \fa\ \}.
$$
In view of \reff{e.dnorm}, 
$$
\fn: =- \| \fa\|_{\cX^\pp} \ {\bf{e}} + \fa \le 0 .
$$
Hence, trivially, $\fn \in \fL_\cQ$.  Therefore,
$(\| \fa\|_{\cX^\pp}, \fn) \in \fD_\cQ$.  In particular,
$\fD_\cQ$ is non-empty.
Suppose that $(c,\fz) \in \fD_\cQ$.
Let $\eta \in \cQ$.
Then, $\eta({\bf{e}})=1$, $\eta \ge 0$ and
$\fz(\eta)\le0$ for every 
$\fz \in \fL_{\cQ}$.
Consequently,
$$
\fa(\eta)= \eta({\bf{c}})+ \fz(\eta) \le c .
$$
This proves that $\dP(\fa;\cQ) \le \dD(\fa;\fL_{\cQ})$.

Since $\cQ$ is non-empty and $\eta({\bf{e}})=1$ for every
$\eta \in \cQ$, we also conclude that
$$
- \| \fa\|_{\cX^\pp} \le \dP(\fa;\cQ) \le \dD(\fa;\fL_{\cQ}).
$$
Hence, $\dD(\fa;\fL_{\cQ})$ is finite and there exists 
a sequence $(c_n, \fz_n) \in \fD_\cQ$
so that $c_n$ 
monotonically converges to $\dD(\fa;\fL_{\cQ})$.
Then,
$$
\left\| \fz_n-\fz_m \right\|_{\cX^\pp} = |c_n-c_m|
$$
for each $n,m$.  This implies that the strong limit $\fz_\fa$
of the sequence $\fz_n$ exists and satisfies \reff{e.attainment}.
It is clear that $\fK_\cQ$ is closed in the weak$^*$ topology and
hence is also closed in the strong topology.  
Since $\fL$ is closed by hypothesis, $\fz_\fa \in \fL_\cQ= \fL \cap \fK_\cQ$.
Set
$$
c^*:= \dP(\fz_\fa; \cQ).
$$
Since $\fz_\fa \in \fL_\cQ$, $c^* \le 0$.  Moreover,
$\fz_\fa - {\bf{c^*}} \in \fL_\cQ$ and
$$
\left[ \dD(\fa;\fL_{\cQ})+c^* \right] {\bf{e}} +\left[ \fz_\fa - {\bf{c^*}}\right] = \fa
\quad
\Rightarrow
\quad
\left(\dD(\fa;\fL_{\cQ})+c^*,  \fz_\fa - {\bf{c^*}}\right) \in \fD_\cQ.
$$
Since $\dD(\fa;\fL_{\cQ})$ is the minimum over all constants $c$
so that there is $\fz \in \fL_\cQ$ satisfying $(c,\fz) \in \fD_\cQ$
and since $c^* \le 0$,
we conclude that $c^*=0$.  Then,
$\fa= \fz_\fa + \dD(\fa;\fL_{\cQ})\ {\bf{e}}$ and
consequently,
$$
\dP(\fa;\fL_{\cQ}) = \sup_{\eta \in \cQ} \ 
\left[ \fz_\fa + \dD(\fa;\fL_{\cQ})\ {\bf{e}}\right](\eta)
= \dP(\fz_\fa;\fL_{\cQ}) + \dD(\fa;\fL_{\cQ}) = \dD(\fa;\fL_{\cQ}).
$$
Hence the duality on $\fL$ holds when $\fK= \fL_\cQ$.

We continue by proving the opposite implication.
Suppose that the duality \reff{e.dual} holds
for every $\fa \in \fL$. 
Set $\overline{\fK}$ be the strong closure of $\fK$.
Then, by Lemma \ref{l.properties}, $\dD(\cdot;\fK)= \dD(\cdot;\overline{\fK})$.
We first claim
that $\overline{\fK}$ is contained in $\fL_{\cQ}$. 
Fix $\fz_0 \in \overline{\fK}$.
By the
definition of the dual problem, 
$\dD(\fz_0;\overline{\fK})\le 0$.  
Since the duality holds,
$$
\sup_{\eta \in \cQ} \fz_0(\eta) = \dP(\fz_0;\cQ)
=\dD(\fz_0;\fK)= \dD(\fz_0;\overline{\fK})  \le 0.
$$
Hence, $\fz_0 \in \fK_{\cQ}$.
\vspace{4pt}

To prove the 
opposite inclusion, let $\fz^* \in \fL_\cQ$.
Then,
$\fz^*(\eta)\le0$ for every $\eta \in \cQ$.
Since, by hypothesis, the duality holds,
we conclude that
$$
c_0:=\dD(\fz^*;\fK)= \dP(\fz^*;\cQ) 
= \sup_{\eta \in \cQ} \fz^*(\eta) \le 0.
$$
By Lemma \ref{l.properties}, there are $\fa^* \in \overline{\fK}$
such that
$\fz^*= c_0 \ \e + \fa^*$.  Since $c_0 \le 0$, we  have $c_0 \ \e \in \cX^\pp_-$.
Since $\overline{\fK}$ satisfies  \reff{e.cond}, 
we conclude that $\fz^* \in \overline{\fK}$ and
consequently, $\fL_{\cQ} \subset \overline \fK$.
Therefore, 
$\overline \fK =\fL_{\cQ}$ whenever the duality holds.
 \end{proof}
\vspace{2pt}

\subsection{Duality in $\cX$}
\label{ss.dualityinX}

We continue by proving the duality in $\cX$.
For the optimal transport and 
its several extensions, Zaev \cite{Z} also
provides a proof
of this duality when $\cX=C_b(\Om)$.

For any $f \in \cX$,  with an abuse of notation, we write $\dP(f;\cQ)$
instead of $\dP(\fI(f);\cQ)$ and $\dD(f; \cH_\cQ+\cX_-)$ instead
of $\dD(\fI(f);\fI(\cH_\cQ+\cX_-))$.  
Next, we use Theorem \ref{t.main} with $\fL=\fI(\cX)$
to prove duality in $\cX$. This result
can also be proved as a direct
consequence of Theorem 7.51 of  \cite{AB}.

Recall that the sets
$\cH_\cQ$ and $\cC_\cQ$
are defined in Section \ref{s.setup}.

\begin{cor}[{\bf{Duality in $\cX$}}]
\label{c.cont}  Under Assumption \reff{a.main},
$$
\dP(f;\cQ)= \dD(f;{\cH_\cQ+\cX_-}), \quad \forall f \in \cX.
$$
\end{cor}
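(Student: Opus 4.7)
The plan is to apply Theorem \ref{t.main} with $\fL=\fI(\cX)$ and $\fK=\fI(\cH_\cQ+\cX_-)$. Since $\fI$ is an isometry, $\fI(\cX)$ is a strongly closed subspace of $\cX^{\pp}$ containing $\e$. The set $\fI(\cH_\cQ+\cX_-)$ is a cone contained in $\fL$, and the stability condition of Theorem \ref{t.main} is immediate: if $\fn=\fI(n)\in\fL\cap\cX^{\pp}_-$, then $n\le 0$ in $\cX$ (a standard property of Banach lattices: $\eta(n)\le 0$ for all $\eta\in\cX^\p_+$ forces $n\le 0$), so adding $\fn$ to an element $\fI(h+m)$ of $\fK$ yields $\fI(h+m+n)\in\fK$. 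With these verifications in place, Theorem \ref{t.main} will give the claimed duality provided we show that the strong closure of $\fK$ equals $\fK_\cQ\cap\fL$.

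The main step is therefore to prove, in $\cX$, the identity
\[
\overline{\cH_\cQ+\cX_-}=\{f\in\cX:\eta(f)\le 0\ \forall\eta\in\cQ\},
\]
whose image under $\fI$ is exactly $\fK_\cQ\cap\fI(\cX)$. The inclusion ``$\subseteq$'' is trivial because $\cQ\subset\cH_\cQ^\perp\cap\cX^\p_+$ annihilates $\cH_\cQ$ and is non-positive on $\cX_-$, and the right-hand side is strongly closed. For the converse, I would argue by contradiction using Hahn--Banach in $\cX$: if some $f$ in the right-hand side does not lie in the strongly closed convex set $\overline{\cH_\cQ+\cX_-}$, then there exists $\mu\in\cX^\p$ with $\mu(f)>\sup_{g\in\cH_\cQ+\cX_-}\mu(g)$. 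Because $\cH_\cQ+\cX_-$ is a cone containing $0$, the supremum is either $0$ or $+\infty$, so it must be $0$; then $\mu\in\cH_\cQ^\perp\cap\cX^\p_+=\cC_\cQ$.

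The remaining, very short, step is to renormalize $\mu$ into an element of $\cQ$. Since $\mu(f)>0$ we have $\mu\ne 0$, and because $\cX$ is an AM-space with order unit $\e$, positivity of $\mu$ yields $\mu(\e)=\|\mu\|_{\cX^\p}>0$ by \reff{e.norm2}. Hence $\eta_0:=\mu/\mu(\e)\in\cH_\cQ^\perp\cap\partial B^\p_+=\cQ$, but $\eta_0(f)>0$ contradicts the hypothesis on $f$. This gives ``$\supseteq$'', hence $\overline{\fK}=\fK_\cQ\cap\fL$, and Theorem \ref{t.main} yields $\dP(\fI(f);\cQ)=\dD(\fI(f);\fK)$ for every $f\in\cX$, which is the statement after unwinding the notational convention $\dP(f;\cQ)=\dP(\fI(f);\cQ)$ and similarly for $\dD$.

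The only genuinely delicate point is the separation argument in the second paragraph: it relies on the fact that the recession of $\cH_\cQ+\cX_-$ forces the separating supremum to be $0$, together with the AM-space identity $\mu(\e)=\|\mu\|_{\cX^\p}$ which guarantees that a non-zero positive $\mu$ can always be rescaled to sit on the unit sphere $\partial B^\p_+$. Both are already built into the standing hypotheses, so no additional regularity or tightness assumption on $\cQ$ is needed.
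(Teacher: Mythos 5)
Your proof is correct and follows essentially the same route as the paper: invoke Theorem \ref{t.main} with $\fL=\fI(\cX)$, and establish via Hahn--Banach separation in $\cX$ that the separating functional lies in $\cC_\cQ=\cH_\cQ^\perp\cap\cX^\p_+$, then contradict. The only cosmetic difference is the final step, where you renormalize $\mu$ to $\eta_0\in\cQ$, while the paper instead uses the remark following \reff{eq:defh} that $\fK_\cQ$ can equivalently be tested against all of $\cC_\cQ$; both close the argument in the same way.
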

\begin{proof} Let $\cK$ 
be the strong closure of $\cH_\cQ+\cX_-$. 
Since $\fI(\cX)$ is a closed set, in view of Theorem
\ref{t.main}, it suffices to show that $\cK$ is equal to
$$
\cK_\cQ:=\left\{
k \in \cX\ :\
 \fI(k) \in \fK_\cQ\ \right\}.
$$
Towards a contraposition, assume that there is 
$f_0 \in \cK_\cQ \setminus
\cK$.
Since $\cK$ is closed by its definition, by Hahn-Banach, there exists
$\eta_0 \in \cX^\p$ satisfying,
$$
c_0:=\sup_{f \in \cK} \eta_0(f) < \eta_0(f_0).
$$
Since $\cK$ contains 
$\cH_\cQ+\cX_-$, $c_0=0$
and also $\eta_0 \ge 0$.
Consequently, $\eta_0 \in \cH_\cQ^\perp \cap \cX^\p_+$
and this set is equal to  $\cC_\cQ$.  

Since $\fI(f_0) \in \fK_\cQ$ and $\eta_0 \in \cC_\cQ$,
$\eta_0(f_0)=\fI(f_0)(\eta_0) \le 0$.
This contradicts
the contraposition hypothesis.  
This proves that $\cK_\cQ=\cK=\overline{\cH_\cQ+\cX_-}$
and consequently,
$$
\dP(f;\cQ)=\dD(f;\overline{\cH_\cQ+\cX_-}).
$$
We now conclude by using Lemma \ref{l.properties}.
\end{proof}
\vspace{2pt}

\begin{rem}
\label{r.noattainment}
{\rm{It is clear that the dual attainment
is equivalent to the closedness of the set $\cH_\cQ+\cX_-$.
However, in general, this set 
may not be closed.  In such situations,
 the duality holds without dual attainment.
 
The corollary above shows why the duality is usually
easier to prove in $\cX$. Indeed, as shown in Lemma \ref{l.properties}, 
thanks to the {\em{a priori}} regularity of the value of the dual problem 
with respect to the lattice norm, a hedging set and 
its strong closure gives the same value for the dual problem. 
This invariance with respect to strong closure is 
exactly the crucial ingredient
used above to prove the duality in $\cX$. 
 }}
\qed
\end{rem}
\vspace{2pt}

An alternate approach to duality in $\cX$ is developed
in a  series of papers \cite{BCKT,CKT1,CKT}.  Indeed,
these papers also establish duality for continuous
functions very efficiently for a very general class.  Then, they extend their
results to upper semi-continuous
functions by analytic approximation techniques.

\subsection{Duality with Lower Subspaces}
\label{ss.lower}

We call a set in $\cX^\pp$ a {\em{lower subspace}} if it is
of the form $\fH+\cX^\pp_-$ for some subspace $\fH$.
In this section, we investigate when the duality holds
with these types of dual sets.  

Recall that $\cA_\cQ$, $\cC_\cQ$, $\fK_\cQ, \fH_\cQ$
are defined in Section \ref{s.setup}.
Futher let $\hat \cA_\cQ$ be the linear span of $\cQ$.
Then, $\hat \cA_\cQ= \cC_\cQ-\cC_\cQ$.  Set
$$
\hat \fh_\cQ:= \hat \cA_\cQ^\perp,\qquad
\fA_\cQ:= 
\overline{\hat \fH_\cQ+\cX^\pp_-}.
$$
For any a set $B$ in the dual  of a Banach space,  
$\overline{B}^*$ is the weak$^*$ closure of $B$.
\begin{thm}
\label{t.char} Under the Assumption \ref{a.main},
the following are equivalent:
\begin{enumerate}
\item 
\label{fH}
There exists a subspace $\fH$  of $\cX^\pp$ such that
the duality  on $\cX^\pp$ holds with $\fH +\cX^\pp_-$, i.e.,
$$
\dP(\fa;\cQ)= \dD(\fa; \fH+\cX^\pp_-),
\quad \forall \fa \in \cX^\pp.
$$
\item
\label{fK}
$\fA_\cQ= \fK_\cQ$.
\item
\label{fA}
The duality with $\fA_\cQ$ holds on $\cX^\pp$, i.e.,
$$
\dP(\fa;\cQ)= \dD(\fa; \fA_\cQ),
\quad \forall \fa \in \cX^\pp.
$$
\item 
\label{ppp}
$\overline{\fI(\hat \cA_\cQ)}^{\ *}
 \cap \cX^\ppp_+ = \overline{ \fI(\cC_\cQ)}^{\ *}$.
\end{enumerate}
Moreover, when \reff{fH} holds,
then, $\fH$
is a subset of  $\hat \fH_\cQ$ and 
the strong closure of 
$\fH +\cX^\pp_-$ is equal to $\fK_\cQ$.

\end{thm}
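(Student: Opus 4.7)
The plan is to establish the equivalences via the cycle $\eqref{fH}\Rightarrow\eqref{fK}\Rightarrow\eqref{fA}\Rightarrow\eqref{fH}$ using Theorem~\ref{t.main} and Lemma~\ref{l.properties}, and to treat $\eqref{fK}\Leftrightarrow\eqref{ppp}$ separately by a bipolar argument in the $(\cX^\pp,\cX^\ppp)$-pairing. The ``moreover'' assertion will emerge as a by-product of the argument for $\eqref{fH}\Rightarrow\eqref{fK}$.

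For $\eqref{fH}\Rightarrow\eqref{fK}$, I would first verify that $\fH+\cX^\pp_-$ is a cone satisfying \eqref{e.cond}, then apply Theorem~\ref{t.main} with $\fL=\cX^\pp$ and $\fK=\fH+\cX^\pp_-$ to obtain $\overline{\fH+\cX^\pp_-}=\fK_\cQ$; in particular $\fH\subset\fK_\cQ$. Since $\fH$ is a linear subspace, both $\fz$ and $-\fz$ lie in $\fK_\cQ$ for every $\fz\in\fH$, which forces $\fz$ to annihilate every $\eta\in\cQ$ and, by linearity, every $\eta\in\hat\cA_\cQ$. Hence $\fH\subset\hat\cA_\cQ^\perp=\hat\fH_\cQ$, which already is the first half of the ``moreover'' claim. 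Taking strong closures in
\[
\fH+\cX^\pp_-\ \subset\ \hat\fH_\cQ+\cX^\pp_-\ \subset\ \fK_\cQ,
\]
and using that $\fK_\cQ$ is weak$^*$ (hence strongly) closed, one obtains the sandwich $\fK_\cQ\subset\fA_\cQ\subset\fK_\cQ$, which yields \eqref{fK} together with the second half of the ``moreover''.

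The implications $\eqref{fK}\Rightarrow\eqref{fA}$ and $\eqref{fA}\Rightarrow\eqref{fH}$ are short. For the first, $\fA_\cQ$ is a strongly closed cone that inherits \eqref{e.cond} from $\hat\fH_\cQ+\cX^\pp_-$, so Theorem~\ref{t.main} applied with $\fK=\fA_\cQ$ yields duality since by hypothesis $\overline{\fA_\cQ}=\fA_\cQ=\fK_\cQ$. For the second, Lemma~\ref{l.properties} identifies $\dD(\,\cdot\,;\hat\fH_\cQ+\cX^\pp_-)$ with $\dD(\,\cdot\,;\fA_\cQ)$, so the choice $\fH=\hat\fH_\cQ$ works.

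The main obstacle is $\eqref{fK}\Leftrightarrow\eqref{ppp}$, which requires careful bookkeeping of polars across the three levels $\cX^\p,\cX^\pp,\cX^\ppp$. The plan is to write $\fK_\cQ=\fI(\cC_\cQ)^\circ$ and $\fA_\cQ=(\hat\fH_\cQ+\cX^\pp_-)^{\circ\circ}$, where $\circ$ denotes the polar in the $(\cX^\pp,\cX^\ppp)$-pairing. A short computation, using that $\hat\fH_\cQ$ is a subspace and $\cX^\pp_-$ a cone, yields $(\hat\fH_\cQ+\cX^\pp_-)^\circ=\hat\fH_\cQ^\perp\cap\cX^\ppp_+$, and this intersection is already weak$^*$ closed in $\cX^\ppp$ (the intersection of two weak$^*$ closed sets). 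The bipolar theorem then translates \eqref{fK} into $\hat\fH_\cQ^\perp\cap\cX^\ppp_+=\overline{\fI(\cC_\cQ)}^{\ *}$, and the standard annihilator identification $\hat\fH_\cQ^\perp=\overline{\fI(\hat\cA_\cQ)}^{\ *}$ converts this into precisely \eqref{ppp}. The delicate point is the interchange between strong closure (used to define $\fA_\cQ$) and weak$^*$ closure in $\cX^\ppp$; this is legitimate because strong and weak closures coincide for convex sets in $\cX^\pp$, and the bipolar operation transports weak closure in $\cX^\pp$ to weak$^*$ closure in $\cX^\ppp$.
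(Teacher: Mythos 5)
Your cycle $\eqref{fH}\Rightarrow\eqref{fK}\Rightarrow\eqref{fA}\Rightarrow\eqref{fH}$ is correct and essentially matches the paper's argument, though you streamline $\eqref{fH}\Rightarrow\eqref{fK}$ by invoking the ``only if'' direction of Theorem \ref{t.main} to get $\overline{\fH+\cX^\pp_-}=\fK_\cQ$ directly, whereas the paper re-derives this via dual attainment and property \eqref{e.cond}; both are valid, and yours is slightly shorter. The genuine departure is in $\eqref{fK}\Leftrightarrow\eqref{ppp}$: the paper proves each direction separately by contraposition, choosing a point in the set difference and running a Hahn--Banach separation in $\cX^\ppp$ (resp.\ in $\cX^\pp$), whereas you package both directions into one bipolar computation in the $(\cX^\pp,\cX^\ppp)$-pairing. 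Your identities $\fK_\cQ=\fI(\cC_\cQ)^\circ$, $\fA_\cQ=(\hat\fH_\cQ+\cX^\pp_-)^{\circ\circ}$, $(\hat\fH_\cQ+\cX^\pp_-)^\circ=\hat\fH_\cQ^\perp\cap\cX^\ppp_+$, and $\hat\fH_\cQ^\perp=\overline{\fI(\hat\cA_\cQ)}^{\,*}$ are all correct, and the key technical point --- that the $\sigma(\cX^\pp,\cX^\ppp)$-closure appearing in the bipolar coincides with the norm closure $\fA_\cQ$ because $\hat\fH_\cQ+\cX^\pp_-$ is convex --- is exactly the right observation and you flag it explicitly. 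What the bipolar route buys is symmetry: the equivalence falls out by taking polars once, rather than constructing two ad hoc separating functionals. What the paper's route buys is that it exposes, in both contrapositions, exactly which separating functional obstructs the equality; this is closer in spirit to the separation arguments used elsewhere in the paper (e.g.\ Lemma \ref{l.connect}, Corollary \ref{c.cont}). Either presentation is rigorous, and both rely on the same underlying Hahn--Banach/bipolar machinery.
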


\begin{proof}

$\reff{fH}  \Rightarrow  \reff{fK}$.
For any $\fa \in \fH$ and $\eta \in \cQ$,
$$
\fa(\eta) \le \dP(\fa;\cQ) = \dD(\fa; \fH+ \cX^\pp_-) \le 0.
$$
Since $\fH$ is a subspace, the above implies that
$\fH \subset \hat \fH_\cQ$.  Let $\fK$ be 
the strong closure of $\fH+\cX^\pp_-$.  
Then,  $\fK \subset \fA_\cQ \subset \fK_\cQ$.  

Let $\fa \in \fK_\cQ$.  Then, by the definition of $\fK_\cQ$,
$\dP(\fa;\cQ) \le 0$.  Hence,
$$
\dD(\fa;\fK)= \dP(\fa;\cQ) \le 0.
$$
Since $\fK$ is closed and has the property \reff{e.cond},
there is $\fz_0 \in \fK$ so that
$$
\dP(\fa;\cQ) \e + \fz_0 =\fa.
$$
We use the property \reff{e.cond} once more to conclude 
that $\fa \in \fK$.  So we have proved that
$\fK= \fK_\cQ$.  Since  $\fK \subset \fA_\cQ \subset \fK_\cQ$,
this also proves that $\fA_\cQ=\fK_\cQ$.
\vspace{2pt}

$\reff{fK} \Rightarrow \reff{fA} $. This follows directly
from Theorem \ref{t.main}.
\vspace{2pt}

$\reff{fA} \Rightarrow \reff{fH}$.  In view Lemma 
\ref{l.properties},  the 
primal problem with $\hat \fH_\cQ+\cX^\pp_-$ and
with its closure $\fA_\cQ$ have the same value.
\vspace{2pt}

$\reff{ppp} \Rightarrow \reff{fK}$.  
Towards a contraposition,
suppose that $\fA_\cQ$ is not equal 
to $\fK_\cQ$. Let
$\fa_0 \in \fK_\cQ \setminus \fA_\cQ$.  By Hahn-Banach,
there exists $\aleph_0 \in \cX^\ppp$ such that,
$$
c_0= \sup_{\fa \in \fA_\cQ} \aleph_0(\fa) < \aleph_0(\fa_0).
$$
As it is argued before in similar situations, 
we conclude that 
$\aleph_0 \in \hat \fH_\cQ^\perp \cap \cX^\ppp_+$
and  $c_0=0$.
Since $\hat \fH_\cQ^\perp$ is equal 
to the weak$^*$  closure of $\fI(\hat \cA_\cQ)$,
$\aleph_0 \in \overline{\fI(\hat \cA_\cQ)}^{\ *} \cap \cX^\ppp_+$
and hence, $\aleph_0
\in \overline{\fI(\cC_\cQ)}^{\ *}$.  
It is clear that
$$
\aleph(\fz) \le 0, \quad \forall \fz \in \fK_\cQ,
\ \ \aleph  \in \overline{\cC_\cQ}^{\ *}.
$$
Hence, $\aleph_0(\fa_0) \le 0$.
This contradicts with the contraposition hypothesis.
Hence, $\fA_\cQ=\fK_\cQ$.

\vspace{2pt}

$\reff{fK} \Rightarrow \reff{ppp}$. Suppose that the 
contrary of \reff{ppp} holds.  Then, there is 
$$
\gimel_0 \in \overline{\fI(\hat \cA_\cQ)}^{\ *} \cap \cX^\ppp_+ 
\setminus \overline{ \fI(\cC_\cQ)}^{\ *}.
$$
Since $\overline{ \fI(\cC_\cQ)}^{\ *}$ is regularly convex,
there exists $\fa_0 \in \cX^\pp$ satisfying,
$$
c_0=\sup_{ \gimel \in \overline{ \fI(\cC_\cQ)}^{\ *} }
\gimel(\fa_0) < \gimel_0(\fa_0).
$$
Then, it is clear that $c_0=0$
and consequently, $\fa_0 \in \fK_\cQ$. 
Then, by \reff{fK}, $\fa_0\in \fA_\cQ$ and there exists a sequence
$\fa_n = \fh_n + \fz_n \in \hat  \fH_\cQ+ \cX^\pp_-$ 
converging to $\fa_0$ in the strong topology of
$\cX^\pp$. For each $n$, since $\fh_n \in \hat \fH_\cQ$,
$\gimel_0(\fh_n)=0$ and since $\fz_n \le0$, $\gimel_0(\fz_n) \le0$.
Therefore, $\gimel_0(\fa_n) \le 0$ for each $n$ and 
by letting $n$ tend to infinity, we conclude that
$\gimel_0(\fa_0) \le0$.
This contradicts with the choice of $\gimel_0$,
namely, $\gimel_0(\fa_0)>c_0=0$.
\end{proof}
\vspace{2pt}

\begin{rem}
\label{r.condition}
{\rm{  One could prove the implication
$\reff{ppp} \Rightarrow \reff{fH}$ 
by considering the duality in the space
$\cX^\pp$ and then
applying Corollary \ref{c.cont}.  However,
for structural reasons,
this approach requires
the condition \reff{ppp}.

Example \ref{ex.gap} below shows that, in general
 neither $\fH_\cQ +\cX^\pp_-$ 
nor  $\hat \fH_\cQ +\cX^\pp_-$ are
equal to $\fK_\cQ$.
 Example 8.4 of \cite{BNT} also
demonstrates a similar phenomenon.
\qed
}}
\end{rem}
\vspace{2pt}

\subsection{Factor Spaces}
\label{ss.factor}

In our context, Theorem 12 of \cite{KS}
states that if $\fH$ is a regularly convex subspace of $\cX^\pp$,
then the dual of the subspace $\fH_\perp$
is equal to the quotient space $\cX^\pp/\fH$.
This result provides a statement quite similar
to the duality proved earlier
but with two-sided inequalities.
  
\begin{lem}
\label{l.factor}
Suppose that Assumption \ref{a.main} holds.
Then, $\cX^\pp/\fH_\cQ$
is the topological dual of $\cA_\cQ$.
Consequently,
$$
\sup_{\eta \in \cA_\cQ} \ \frac{\left| \fa(\eta) \right|}
{\|\eta\|_{\cX^\prime}} = \inf_{\fh \in \fH_\cQ}\ \|\fa - \fh\|_{\cX^{\prime\prime}},
\quad \fa \in \cX^{\prime\prime}.
$$
\end{lem}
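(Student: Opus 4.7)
The plan is to view the identity as a direct consequence of the Hahn--Banach duality theorem applied to the closed subspace $\cA_\cQ \subset \cX^\p$, which is the concrete form that the invocation of Theorem 12 of \cite{KS} takes here. The key object is the restriction map $r:\cX^\pp \to \cA_\cQ^\p$ defined by $r(\fa)(\eta) := \fa(\eta)$ for $\eta \in \cA_\cQ$.

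First I would observe that $\cA_\cQ = \cH_\cQ^\perp$ is weak$^*$ closed as an annihilator, hence norm closed in $\cX^\p$, and therefore a Banach space with the restricted norm. The map $r$ is linear and $1$-Lipschitz, and its kernel is
$$
\ker r = \{\fa \in \cX^\pp : \fa(\eta) = 0,\ \forall \eta \in \cA_\cQ\} = \cA_\cQ^\perp = \fH_\cQ.
$$
Surjectivity of $r$ then comes from Hahn--Banach: every $\Phi \in \cA_\cQ^\p$ extends to some $\fa \in (\cX^\p)^\p = \cX^\pp$ with $\|\fa\|_{\cX^\pp} = \|\Phi\|_{\cA_\cQ^\p}$. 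The same extension identity upgrades the induced bijection $\bar r : \cX^\pp/\fH_\cQ \to \cA_\cQ^\p$ to an isometric isomorphism, which is the first assertion of the lemma and matches Theorem 12 of \cite{KS} specialized to the (automatically regularly convex) subspace $\fH_\cQ$.

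The displayed norm identity is then a direct readout. The left-hand side is by definition the operator norm of $r(\fa)$, i.e.\ of the restriction of $\fa$ to $\cA_\cQ$; the right-hand side is the quotient norm of $[\fa]$ in $\cX^\pp/\fH_\cQ$. These two quantities are identified through $\bar r$, which is an isometry.

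I do not anticipate any real obstacle. The only non-formal ingredient is the norm-preserving Hahn--Banach extension that promotes $\bar r$ from an algebraic bijection to an isometric isomorphism; everything else is purely formal manipulation of annihilators and quotient norms.
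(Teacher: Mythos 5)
Your proof is correct, and the underlying mathematical content is the same as the paper's, but the route is more self-contained. The paper delegates the main step to two external facts from Krein--\v{S}mulian: it first invokes the Lemma on page~573 of \cite{KS} to establish $(\fH_\cQ)_\perp = (\cA_\cQ^\perp)_\perp = \cA_\cQ$ (this is needed because Theorem~12 of \cite{KS} is phrased in terms of a regularly convex subspace $\fH$ of $\cX^\pp$ and identifies $(\fH_\perp)^\p$ with $\cX^\pp/\fH$), and then cites Theorem~12 itself for the isometric identification. You instead give the classical Hahn--Banach proof of the standard fact that for a closed subspace $\cM$ of a Banach space $\cY$ one has $\cM^\p \cong \cY^\p/\cM^\perp$ isometrically, specialized to $\cY = \cX^\p$, $\cM = \cA_\cQ$. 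Your argument is complete: the restriction map $r$ is $1$-Lipschitz with kernel $\cA_\cQ^\perp = \fH_\cQ$, the norm-preserving Hahn--Banach extension gives both surjectivity and the inequality $\inf_{\fh}\|\fa-\fh\| \le \|r(\fa)\|$, and the reverse inequality is immediate from $1$-Lipschitz-ness, so $\bar r$ is an isometric isomorphism. One small remark: you observe that $\cA_\cQ$ is weak$^*$ closed hence norm closed; only norm closedness is actually used, and more importantly what matters for the quotient to be a Banach space is that $\fH_\cQ$ is norm closed in $\cX^\pp$, which it is as an annihilator. What the paper's route buys is brevity and an explicit tie to the regular-convexity machinery used throughout; what yours buys is a self-contained proof that does not require the reader to unpack the Krein--\v{S}mulian references.
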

\begin{proof}
Since by its definition $\cA_\cQ$ is closed,
by the Lemma on page 573 in \cite{KS},
we conclude that
$$
\left(\fH_\cQ\right)_\perp = 
\left(\cA_\cQ^\perp\right)_\perp = \cA_\cQ.
$$
Hence, Theorem 12 of \cite{KS} implies the 
duality statement of the lemma.  
Also observe that for any $\fa \in \cX^{\prime\prime}$,
$$
\sup_{\eta \in \cA_\cQ} \ \frac{\left| \fa(\eta) \right|}
{\|\eta\|_{\cX^{\prime}}} 
= \|\fa\|_{(\cA_\cQ)^\prime} = 
\|\fa\|_{\cX^{\prime\prime}/\fH_\cQ} = 
\ \inf_{\fh \in \fH_\cQ}\ \|\fa - \fh\|_{\cX^{\prime\prime}}.
$$
\end{proof}
  
\begin{rem}
\label{r.analog}  
{\rm{One may interpret the left hand side of 
the above equation
as a primal transport problem
and the right hand side as its dual.
Indeed,  \reff{e.dnorm}
implies  the following duality
with $\tilde \cQ:= \cH_\cQ^\perp \cap B^\pp_1$,
\begin{align*}
\tilde \dP(\fa;\tilde \cQ)&:= \sup_{\eta \in \tilde \cQ} \fa(\eta)\\
&=\tilde \dD(\fa;\fH)
 := \inf \left\{ \ c \ge 0\ :\ 
 \  \exists \ \fh \in \fH \  \
 {\mbox{such that}}\ \ - {\bf{c}} \le \fa -\fh \le  {\bf{c}}\ \right\}.
\end{align*}
Notice that $\tilde \cQ$ is not a subset of 
$\cX^\p_+$ and this is a crucial difference
between the above identity and the duality \reff{e.dual}.
\qed}}
\end{rem}
\vspace{2pt}
 \section{Classical optimal  transport}
 \label{s.ot}
 This section studies the classical
 duality result of Kantorovich \cite{Kan}
 in this context.
The optimal transport duality for general Borel measurable functions
was proved by Kellerer \cite{Kel}
and a very general extension was recently
given by Beiglb\"ock, Leonard and Schachermayer
\cite{BLS}.
We also refer to the lecture notes
of Ambrosio \cite{Amb} and the classical books of 
Rachev and R\"uschendorf \cite{RR}, Villani \cite{Vil}
and the references therein  
for more information.
 
 \subsection{Set-up}
 \label{ss.setup}
For two closed
 sets $X, Y \subset \R^d$  set $\Om:= X \times Y$, 
$$
 \cC_b= C_b(\Om),
 \quad
 \cC_x:= C_b(X), \quad
 \cC_y := C_b(Y).
$$
The Banach lattice $\cX= \cC_b$ has the order unit
${\bf{e}}\equiv 1$.  
We fix
 $$
 \mu \in \cM_1(X), 
 \quad
 {\mbox{and}}
 \quad
 \nu \in \cM_1(Y),
$$
where $\cM_1(Z)$ is the set of all probability measures
 on a given Borel subset $Z$ of a Euclidean space.
Set
\begin{align*}
\cH_{ot}&:= \left\{ h\oplus g \ :\
h \in \cC_x, \ g \in \cC_y\quad
{\mbox{and}}\quad \mu(h)=\nu(g)=0 \right\},\\
\cQ_{ot}&:=\left\{ \eta \in (\cC^\prime_b)_+ \cap B^\prime_1
\ :\ \ \eta(f)=0,\ \
\forall f \in \cH_{ot} \right\}.
\end{align*}
By its definition,
$\cQ_{ot}$ and $\cH_{ot}$ satisfy Assumption \ref{a.main}.
$\cQ_{ot}$ also has the following well-known representation \cite{Kel}.
We provide its simple proof for completeness.

We first note that by its definition $\cQ_{ot}$ is a subset of $\cC^\prime_b(\Om)$
and any element $\varphi \in \cC^\prime_b(\Om)$
is a {\em{regular}} bounded finitely additive measure on $\Omega$. 
Moreover, $\varphi$ is countably additive if and only if
it is tight, i.e., for every $\epsilon>0$,
there is a compact $K_\epsilon \subset \Om$
such that  $|\varphi(\Om \setminus K_\epsilon)| \le \epsilon$
(see \cite{AB}).
Since the marginals $\mu$ and $\nu$ 
are countably additive measures,
we  show in the Lemma below
that the elements of $\cQ_{ot}$ are 
tight and consequently are
countably additive probability measures.

\begin{lem}
\label{l.aot}
Any $\eta \in \cC_b^\p$ belongs to
$\cA_{ot}=\cH_{ot}^\perp$ if and only if
$\eta_x= \eta(\1) \mu$ and
$\eta_y= \eta( \1) \nu$.
Moreover,
$\cQ_{ot}$ is a non-empty subset of $\cM_1(\Om)$ and
$\eta \in\cQ_{ot}$ if and only if
 $\eta_x=\mu$ and 
 $\eta_y=\nu$.  
\end{lem}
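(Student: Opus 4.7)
The plan is to prove the annihilator identity in part 1 by a direct decomposition argument, and then to identify elements of $\cQ_{ot}$ with probability measures via a tightness argument, which is the main technical point.

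For part 1, the implication $(\Leftarrow)$ is immediate: if $\eta_x = \eta(\1)\mu$ and $\eta_y = \eta(\1)\nu$, then for any $h \oplus g \in \cH_{ot}$,
\[
\eta(h \oplus g) = \eta_x(h) + \eta_y(g) = \eta(\1)\bigl(\mu(h) + \nu(g)\bigr) = 0.
\]
For $(\Rightarrow)$, given $\eta \in \cA_{ot}$ and an arbitrary $h \in \cC_x$, the function $\tilde h := h - \mu(h)\,\1_X \in \cC_x$ has $\mu(\tilde h) = 0$, so $\tilde h \oplus \0 \in \cH_{ot}$. Using $\eta_x(\1_X) = \eta(\1_X \oplus \0) = \eta(\1)$, the identity $0 = \eta(\tilde h \oplus \0) = \eta_x(h) - \mu(h)\eta(\1)$ yields $\eta_x = \eta(\1)\mu$, and $\eta_y = \eta(\1)\nu$ is handled symmetrically.

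Non-emptiness of $\cQ_{ot}$ is witnessed by the product $\mu \otimes \nu \in \cM_1(\Om)$, which is positive, has unit mass, and satisfies the hypothesis of part 1 with $\eta(\1) = 1$. For any $\eta \in \cQ_{ot}$, positivity together with \reff{e.norm2} give $\eta(\1) = 1$, and part 1 then forces $\eta_x = \mu$ and $\eta_y = \nu$; the reverse implication in the iff is immediate from part 1 once $\eta \in \cM_1(\Om)$ is assumed.

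The remaining task---and the main obstacle---is to promote each such $\eta$ from a regular bounded finitely additive measure (its a priori status as an element of $\cC_b^\prime(\Om)$) to a countably additive Borel measure. The plan is to deduce tightness of $\eta$ from the tightness of $\mu$ and $\nu$ on the Polish spaces $X, Y \subset \R^d$ via a product test function. Given $\epsilon > 0$, pick compacts $K_\mu \subset X, K_\nu \subset Y$ with $\mu(K_\mu), \nu(K_\nu) > 1 - \epsilon/2$, and by a standard cutoff construction select $h_\mu \in \cC_x$ and $h_\nu \in \cC_y$ with $0 \le h_\mu, h_\nu \le 1$, equal to $1$ on the respective compacts, and supported in larger compacts $\tilde K_\mu, \tilde K_\nu$. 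The product $h(x,y) := h_\mu(x) h_\nu(y) \in \cC_b$ satisfies the pointwise bound
\[
\1 - h \le (\1_X - h_\mu)\oplus \0 + \0\oplus(\1_Y - h_\nu),
\]
so that by the marginal identities
\[
\eta(\1) - \eta(h) \le \mu(\1_X - h_\mu) + \nu(\1_Y - h_\nu) < \epsilon.
\]
Since $h \le \1_{\tilde K_\mu \times \tilde K_\nu}$ and $\eta \ge 0$, this forces $\eta(\tilde K_\mu \times \tilde K_\nu) > 1 - \epsilon$, i.e., $\eta$ is tight; combined with its regularity, this yields countable additivity, whence $\eta \in \cM_1(\Om)$.
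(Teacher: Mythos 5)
Your proposal is correct and follows essentially the same route as the paper: the $(\Rightarrow)$ direction of the annihilator characterization via the centered test function $\tilde h = h - \mu(h)\1_X$, non-emptiness via $\mu\otimes\nu$, and tightness deduced from tightness of $\mu$ and $\nu$ using a continuous product cutoff $h_\mu(x)h_\nu(y)$ together with the elementary pointwise inequality $1 - h_\mu h_\nu \le (1-h_\mu) + (1-h_\nu)$. The paper organizes the final estimate a bit differently (establishing $h_\mu h_\nu \ge h_\mu + h_\nu - 1$ and bounding $\eta$ of the compact rectangle from below), but this is exactly the same inequality rearranged, and your complement-side formulation is if anything the cleaner way to write it.
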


\begin{proof} 
Clearly $\mu \times \nu \in \cQ_{ot}$ and hence $\cQ_{ot}$ 
and therefore $\cH_{ot}^\perp$ are non-empty.

Let $\eta \in \cH_{ot}^\perp$.  
Then, for any
$h \in \cC_x$,
$h\oplus \0 -\mu(h) \in \cH_{ot}$.
Therefore, 
$$
0= \eta(h\oplus \0-\mu(h)) = \eta_x(h) - \eta(\1) \mu(h).
$$
Hence, $\eta_x=\eta(\1) \mu$.  Similarly, $\eta_y=\eta(\1)\nu$.
The opposite implication is immediate. 

Suppose $\eta \in \cQ_{ot}$.  Then,
$\eta(\1)=1$ and consequently, 
$\eta_x=\mu$ and $\eta_x=\nu$. 
It remains to show that $\eta$ is in $ca_r(\Om)$
or equivalently that it is countably additive.

For each $\eps >0$ choose 
compact sets $\hat K^\eps_x \subset X$,
$\hat K^\eps_y \subset Y$ so that
$$
\mu(\hat K_x^\eps), \nu(\hat K_y^\eps) > 1-\eps/2.
$$
Then, there exist $h \in \cC_x, g \in \cC_y$ 
and compact sets
$K_x^\epsilon \subset X$, $K_y^\eps \subset Y$
such that $0\le h, g \le 1$, 
$h(x)=1$ whenever $x \in \hat K_x^\eps$, $h(x)=0$
for all $x \not \in K_x^\eps$, and
$g(y)=1$ whenever $y \in \hat K_y^\eps$, $h(y)=0$
for all $y \not \in K_y^\eps$.
Set $\Omega_\eps:=K_x^\eps \times K_y^\eps$.
Then, for any  $\eta \in \cQ_{ot}$,
$$
\eta(\Omega_\eps) \ge  \eta(h \oplus g)/2.
$$
Since $g, h \le 1$, $h(x), g(y) \ge h(x)g(y)$.  Hence,
\begin{align*}
h(x)+g(y) &\ge 2 h(x) g(y)
 = h(x)+g(y) - h(x)(1-g(y))-h(y)(1-g(x)) \\
& \ge h(x)+g(y) -(1-g(y))-(1-h(x)) 
 =  2 [h(x)+g(y) -1].
\end{align*}
This implies that
$(h \oplus g)/2 \ge h \oplus \0 +\0 \oplus g-1$.
Combining all the above inequalities, 
we conclude the following for any $\eta \in \cQ_{ot}$,
\begin{align*}
\eta(\Omega_\eps) &\ge \eta(h \oplus g)/2
\ge  \eta(h \oplus \0) +\eta(\0 \oplus g) -1 =  \mu(h) + \nu(g)-1
 \\ &\ge  \mu(\hat K_\eps) + \nu(\hat K_\eps) -1 \ge  1- \eps.
\end{align*}
Hence, any $\eta \in \cQ_{ot}$ is tight.
In addition $\eta \in \cC_b^\prime$
and therefore,
it is regular and finitely additive. 
These imply that any $\eta \in \cQ_{ot}$ is a countably
additive.
\end{proof}
\vspace{2pt}

\subsection{Dual Elements}
\label{ss.elements}
Since $\cQ_{ot}$ is non-empty, by Corollary \ref{c.cont},
the duality 
holds for continuous functions, i.e.,
\begin{align*}
\dP(f;\cQ_{ot})&:= \sup_{\eta \in \cQ_{ot}} \eta(f)\\
&= \dD(f;\cH_{ot}+(\cC_b(\Om))_-)\\
&:= \inf \left\{ c \in \R\ :\
\exists h \in \cH_{ot}\ \
{\mbox{such that}}\ \ 
{\bf{c}}+h \ge f\ 
\right\},
\quad f \in \cC_b.
\end{align*} 

We continue by studying the duality in the bidual
and in $\cB_b(\Om)$.
By Theorem \ref{t.main}, the duality on
 $\cC^{\prime\prime}_b$ holds with 
 $$
 \fK_{ot}:= \left\{ \fz \in \cC_b^{\prime\prime}\ :\
 \fz(\eta) \le 0, \quad \forall \ \eta \in \cQ_{ot}\ 
 \right\}.
 $$
By Lemma \ref{l.connect}, $\fK_{ot}$ is the weak$^*$ closure
 of $\fH_{ot}+ (\cC^{\prime\prime}_b)_-$, where
 $$
\fH_{ot}:= \left\{ \fh \in \cC_b^{\prime\prime}:\ \
\fh(\eta)=0, \ \ \forall \ \eta \in \cA_{ot}=\cH_{ot}^\perp\  \right\}.
$$
  We continue by obtaining a characterization
 of  $\fH_{ot}$ and $\fK_{ot}$. 
 We then use these results to prove the duality in $\cB_b(\Om)$. 

 Towards this goal, first observe that the projection maps
 $$
  \Pi_x : \eta \in \cC_b^{\prime} \mapsto \eta_x \in \cC_x^\prime, \quad
 {\mbox{and}}
 \quad
  \Pi_y : \eta \in \cC_b^{\prime} \mapsto \eta_y \in \cC_y^\prime
  $$
 are  bounded linear maps with
operator norm equal to one.
 Also, for $\fb \in \cC_x^{\prime\prime}$, $\fc \in \cC_y^{\prime\prime}$,
 define  $ \fb \oplus \fc$ in $\cC_b^{\prime\prime}$ by
 $$
 \left(\fb \oplus \fc \right)(\eta):= 
 \fb(\eta_x)+ \fc(\eta_y), \quad
 \forall\ \eta \in \cC_b^{\prime}.
 $$

We start by proving that certain relevant sets
are regularly convex.
Recall that $\mu \in \cM_1(X)$ and $\nu \in \cM_1(Y)$
are given probability measures.  Set
\begin{align*}
\fB&=\fB_\mu:= \left\{ \fa \in \cC_b^{\prime\prime}\ :\ \exists \fb \in \cC_x^{\prime\prime}, \ \ 
{\mbox{such that}}\ \
\fa=\fb \oplus {\bf{0}}\ \ {\mbox{and}}\ \ 
\fb(\mu)=0\ \right\},\\
\fC&=\fC_\nu:= \left\{ \fa \in \cC_b^{\prime\prime}\ :\ \exists \fc \in \cC_y^{\prime\prime}, \ \ 
{\mbox{such that}}\ \
\fa={\bf{0}} \oplus \fc\ \ {\mbox{and}}\ \ 
\fc(\nu)=0\ \right\}.
\end{align*}
 
\begin{lem}
\label{l.erc}
$\fB$ and $\fC$ are regularly convex.
\end{lem}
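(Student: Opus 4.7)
The plan is to prove that both $\fB$ and $\fC$ are convex and weak$^*$ closed in $\cC_b^{\prime\prime}$, since by the Yoshida characterization recalled in Section \ref{ss.regular} this is equivalent to regular convexity. Convexity is immediate in both cases, as each is a linear subspace: for instance, $\fB$ is the image of the weak$^*$ closed hyperplane $\{\fb \in \cC_x^{\prime\prime} : \fb(\mu) = 0\}$ under the linear map $\fb \mapsto \fb \oplus \0$. I treat only $\fB$, since the argument for $\fC$ is identical after swapping the roles of $X$ and $Y$.

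My strategy for weak$^*$ closedness of $\fB$ is to realize it as a weak$^*$ closed subspace by exhibiting it as the intersection of annihilators of elements in the predual $\cC_b^\prime$. Concretely, I aim to establish the identity
$$\fB = \bigl(\Pi_x^{-1}(0)\bigr)^\perp \cap \{\tilde\mu\}^\perp,$$
where $\Pi_x:\cC_b^\prime \to \cC_x^\prime$ is the marginal projection introduced just before the lemma, $\Pi_x^{-1}(0) = \{\eta \in \cC_b^\prime : \eta_x = 0\}$ is its kernel, and $\tilde\mu \in \cC_b^\prime$ is any fixed preimage of $\mu$ under $\Pi_x$. Each factor on the right is an intersection of weak$^*$-continuous kernels on $\cC_b^{\prime\prime}$, hence weak$^*$ closed, so the identity yields the conclusion.

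The inclusion $\fB \subset \bigl(\Pi_x^{-1}(0)\bigr)^\perp \cap \{\tilde\mu\}^\perp$ is immediate: if $\fa = \fb \oplus \0$ with $\fb(\mu)=0$, then $\fa(\eta)=\fb(\eta_x)$ vanishes on $\Pi_x^{-1}(0)$, and $\fa(\tilde\mu)=\fb(\mu)=0$. For the reverse inclusion, I use that $\Pi_x$ admits a bounded linear section: fixing a point $y_0 \in Y$, the map $\xi \in \cC_x^\prime \mapsto \eta_\xi \in \cC_b^\prime$ defined by $\eta_\xi(f) := \xi(f(\cdot,y_0))$ satisfies $(\eta_\xi)_x = \xi$ and $\|\eta_\xi\|_{\cC_b^\prime} = \|\xi\|_{\cC_x^\prime}$. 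Given $\fa$ on the right-hand side, I set $\fb(\xi):=\fa(\eta_\xi)$ and verify that $\fb\in \cC_x^{\prime\prime}$, that $\fa = \fb\oplus\0$, and that $\fb(\mu) = \fa(\tilde\mu)=0$.

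The main technical point, and the only place where nontrivial input is needed, is showing that $\fb(\xi)$ is independent of the particular preimage chosen under $\Pi_x$; this well-definedness is exactly what annihilation on $\Pi_x^{-1}(0)$ delivers, since the difference of two preimages of $\xi$ lies in $\Pi_x^{-1}(0)$. Linearity of $\fb$ and the bound $\|\fb\|_{\cC_x^{\prime\prime}} \le \|\fa\|_{\cC_b^{\prime\prime}}$ follow from the corresponding properties of $\fa$ combined with the norm-preserving character of $\xi \mapsto \eta_\xi$, while the identity $\fa(\eta)=\fb(\eta_x)$ for every $\eta \in \cC_b^\prime$ is obtained by decomposing $\eta = \eta_{\eta_x} + (\eta - \eta_{\eta_x})$ and using that the second summand lies in $\Pi_x^{-1}(0)$. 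This completes the characterization and hence the weak$^*$ closedness of $\fB$.
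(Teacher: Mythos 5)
Your argument is correct. You and the paper both exploit the fact that the marginal projection $\Pi_x$ admits a bounded right inverse (you use $\xi \mapsto \eta_\xi$ with $\eta_\xi(f) = \xi(f(\cdot,y_0))$, i.e.\ the section $\alpha \mapsto \alpha \times \delta_{y_0}$; the paper uses the section $\alpha \mapsto \alpha \times \beta^\prime$ for an arbitrary fixed $\beta^\prime \ge 0$). The difference lies in how the weak$^*$ closedness is then deduced. The paper observes that $\fB = \Pi_x^\prime(\{\fb : \fb(\mu)=0\})$ and invokes the closed range theorem (Rudin, Thm.\ 4.14): surjectivity of $\Pi_x$ gives weak$^*$ closedness of $R(\Pi_x^\prime)$, and hence of the image of the weak$^*$ closed hyperplane. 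You instead bypass the closed range theorem by proving the annihilator identity $\fB = \bigl(\Pi_x^{-1}(0)\bigr)^\perp \cap \{\tilde\mu\}^\perp$ directly, using the section to build the required $\fb$ by hand; since each factor is an intersection of kernels of weak$^*$-continuous functionals coming from the predual $\cC_b^\prime$, weak$^*$ closedness is immediate. In effect you re-derive the relevant special case of the identity $R(\Pi_x^\prime) = (\ker\Pi_x)^\perp$ rather than citing it, which makes your proof more self-contained at the cost of a few extra verifications; the paper's route is shorter but leans on heavier machinery. One small wording quibble: since you define $\fb(\xi) := \fa(\eta_\xi)$ via a fixed section, $\fb$ is automatically well-defined; what the annihilation on $\Pi_x^{-1}(0)$ really buys you is that $\fa$ factors through $\Pi_x$, i.e.\ $\fa(\eta)=\fb(\eta_x)$ for every $\eta$, which is what you in fact use.
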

 
\begin{proof}
Since, both $\fB$ and $\fC$ are clearly
 convex, we need to prove that they are also
 weak$^*$ closed. Since the proofs for  $\fB$
 and $\fC$ are same, we prove only $\fB$.
  $$
 \Pi^\prime_x : \cC_x^{\prime\prime} \mapsto  \cC_b^{\prime\prime}
  $$
 be the adjoint operator of $\Pi_x$.  Then, 
 \begin{align*}
 \Pi_x^\prime(\fb)(\eta)&= \fb(\Pi_x(\eta))= \fb(\eta_x)=\fb \oplus {\bf{0}} (\eta), \quad 
 \forall \fb \in \cC_x^{\prime\prime},\ \eta \in \cC_b^{\prime}.
 \end{align*}
 In particular,
 $$
 \fB= \Pi^\prime_x\left(\{\fb\in\cC_x^{\prime\prime}:\fb(\mu)=0\}\right). $$
 Additionally the set $\{\fb \in\cC_x^{\prime\prime}:\fb(\mu)=0\}$ is weak* closed. 
 Therefore, by the closed range theorem
 \cite{Rudin}[Theorem 4.14],
 the weak$^*$ closedness of
 $\fB$ is implied by the surjectivity 
 of the maps $\Pi_x$ 
 (and hence
the  closedness of its range).
 
 Indeed, fix $\beta^\prime \in 
 \cC_y^\p\cap B^\prime_+$. 
 Then, for any $\alpha \in \cC_x^\prime$,
 $$
  \Pi_x(\alpha \times \beta^\prime)= \alpha.
    $$
  Hence, $\Pi_x$ is surjective
  and hence, $\fB$ is regularly convex.
 \end{proof}
 \vspace{4pt}
 
 We continue by characterizing $\fK_{ot}$.  The following
 estimate is needed towards this result. 
 For  $R>0$, set
 $\fB_R:= \fB\cap B^{\prime\prime}_R, \quad
 \fC_R:= \fC\cap B^{\prime\prime}_R$.
 \begin{lem}
 \label{l.technical}  
 For any $R>0$, we have,
 \begin{align*}
 \left( \fB +\fC \right) \cap B^{\prime\prime}_R 
 &\subset \fB_{R} + \fC_{R},\\
 \left( \fB +\fC + (\cC^{\prime\prime}_b)_-\right) 
 \cap B^{\prime\prime}_R 
 &\subset \fB_{3R} + \fC_{3R} + 
 \left[(\cC^{\prime\prime}_b)_- \cap B^{\prime\prime}_{7R}\right].
\end{align*}
 \end{lem}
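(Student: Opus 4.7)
Both inclusions will be proved via a common device I call the \emph{tensor-product probe}: for $\xi \in \cC_x^\p$, the functional $\xi \otimes \nu \in \cC_b^\p$ is defined by $(\xi \otimes \nu)(f) := \xi(x \mapsto \nu(f(x,\cdot)))$ for $f \in \cC_b$. Since $x \mapsto \nu(f(x,\cdot))$ lies in $\cC_x$ with supremum $\le \|f\|_\infty$, this probe has $\|\xi\otimes\nu\|_{\cC_b^\p} \le \|\xi\|_{\cC_x^\p}$, and direct evaluation gives marginals $(\xi\otimes\nu)_x = \xi$ and $(\xi\otimes\nu)_y = \xi(1)\,\nu$; the symmetric probe $\mu\otimes\zeta$ for $\zeta \in \cC_y^\p$ is analogous. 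For the first inclusion, I would take $\fa = \fb \oplus \fc \in (\fB + \fC)\cap B_R^\pp$ with $\fb(\mu) = 0 = \fc(\nu)$ and evaluate at $\xi\otimes\nu$, which kills the $\fc$-contribution: $\fa(\xi\otimes\nu) = \fb(\xi) + \xi(1)\fc(\nu) = \fb(\xi)$, giving $|\fb(\xi)| \le R\|\xi\|_{\cC_x^\p}$. This yields $\|\fb\oplus{\bf 0}\|_{\cC_b^\pp} = \|\fb\|_{\cC_x^\pp} \le R$, i.e.\ $\fb\oplus{\bf 0} \in \fB_R$, and symmetrically ${\bf 0}\oplus\fc \in \fC_R$.

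For the second inclusion, I would fix $\fa \in (\fB + \fC + (\cC_b^\pp)_-) \cap B_R^\pp$ together with some initial decomposition $\fa = \fb\oplus\fc + \fn$ with $\fb(\mu) = \fc(\nu) = 0$ and $\fn \le 0$ (pieces possibly unbounded), and rewrite it by exploiting the \emph{reshuffling freedom} $(\fb, \fc, \fn) \mapsto (\fb + \fh, \fc + \fg, \fn - \fh\oplus\fg)$, valid for any mean-zero $\fh \in \cC_x^\pp, \fg \in \cC_y^\pp$ such that $\fn - \fh\oplus\fg$ remains $\le 0$. The first move is to replace $(\fb, \fc)$ by the probe-based averages
\[
\tilde\fb(\xi) := \fa(\xi\otimes\nu) - \xi(1)\,\fa(\mu\otimes\nu), \qquad \tilde\fc(\zeta) := \fa(\mu\otimes\zeta) - \zeta(1)\,\fa(\mu\otimes\nu),
\]
which satisfy $\tilde\fb(\mu) = \tilde\fc(\nu) = 0$ by direct substitution and have norms at most $2R$ via the triangle inequality together with $|\fa(\mu\otimes\nu)| \le R$. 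The provisional residual $\tilde\fn^{(0)} := \fa - \tilde\fb\oplus\tilde\fc$ then has norm at most $5R$ and, crucially, satisfies $\tilde\fn^{(0)}(\pi) = \fa(\pi) - \tilde\fb(\mu) - \tilde\fc(\nu) = \fa(\pi) = \fn(\pi) \le 0$ for every $\pi \in \cQ_{ot}$. A Kantorovich-type correction would then produce mean-zero $\fh \in \cC_x^\pp, \fg \in \cC_y^\pp$ with $\fh\oplus\fg \ge \tilde\fn^{(0)}$ and $\|\fh\|, \|\fg\| \le R$, using the regular convexity of $\fB, \fC$ (Lemma~\ref{l.erc}) to lift the classical $c$-transform to the bidual. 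Setting $(\tilde\fb', \tilde\fc', \tilde\fn) := (\tilde\fb + \fh, \tilde\fc + \fg, \tilde\fn^{(0)} - \fh\oplus\fg)$ then places $\tilde\fb'\oplus{\bf 0} \in \fB_{3R}$, ${\bf 0}\oplus\tilde\fc' \in \fC_{3R}$, and $\tilde\fn \le 0$ with $\|\tilde\fn\| \le \|\fa\| + 2\cdot 3R = 7R$.

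The decisive obstacle will be the quantitative bidual Kantorovich correction: producing the mean-zero $\fh, \fg$ with $\fh \oplus \fg \ge \tilde\fn^{(0)}$ and individual norms at most $R$. In the continuous case $\fa = \fI(f)$ this reduces to the classical $c$-transform applied to an element of sup-norm $\le 5R$ and non-positive optimal-transport value, with the tight bound $R$ requiring a careful single-step $c$-conjugation rather than the naive two-step iteration; extending the estimates to general elements of $\cC_b^\pp$ requires the regular convexity machinery of Lemma~\ref{l.erc} to transport the underlying $\sup_y$ and $\inf_y$ operations through the pre-dual, with the tensor-product probes defined above again playing a decisive role.
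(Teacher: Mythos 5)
Your argument for the first inclusion is correct: the probe $\xi\otimes\nu$ has norm at most $\|\xi\|_{\cC_x^\p}$, satisfies $\fa(\xi\otimes\nu)=\fb(\xi)$ when $\fa=\fb\oplus\fc$ with $\fc(\nu)=0$, and so gives $\|\fb\|_{\cC_x^\pp}\le R$ at once; symmetrically for $\fc$. The probes $\alpha\times\nu$, $\mu\times\beta$ with $\alpha,\beta$ probability measures are exactly what the paper uses at the start of the second inclusion, but only to obtain the \emph{one-sided} bounds $\fb_0,\fc_0\ge -\mathbf{1}$.

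For the second inclusion, however, your plan is circular. After the probe-based reshuffle you are left with $\tilde\fn^{(0)}$ which, as one checks directly, equals $(\fb-\tilde\fb)\oplus(\fc-\tilde\fc)+\fn\in\fB+\fC+(\cC_b^\pp)_-$ and is nonpositive on $\cQ_{ot}$; the ``Kantorovich-type correction'' you then invoke --- find mean-zero $\fh,\fg$ with $\fh\oplus\fg\ge\tilde\fn^{(0)}$ and controlled norms --- is precisely the assertion of Lemma~\ref{l.technical} applied to $\tilde\fn^{(0)}$, whose norm bound $5R$ is moreover worse than the one for $\fa$. Equivalently it is the inclusion $\fK_{ot}\subset\fB+\fC+(\cC_b^\pp)_-$ with quantitative bounds, which is what Proposition~\ref{p.kot} establishes \emph{using} this lemma. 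Lemma~\ref{l.erc} only gives weak$^*$ closedness of $\fB$ and $\fC$ separately; it cannot manufacture a $c$-transform in $\cC_b^\pp$, because a $c$-transform is a pointwise supremum and elements of $\cC_b^\pp$ are not functions. The missing idea is elementary and purely lattice-theoretic: start from \emph{any} decomposition $\fz_0=\fb_0\oplus\fc_0+\fn_0$ with $\fn_0\le0$ and $\fb_0(\mu)=\fc_0(\nu)=0$; the probes give $\fb_0,\fc_0\ge-\mathbf{1}$; truncate from above by $\fb_1:=\fb_0\wedge\mathbf{2}$ and $\fc_1:=\fc_0\wedge\mathbf{2}$ in the $AM$-spaces $\cC_x^\pp$, $\cC_y^\pp$, and check from $\fz_0\le\mathbf{1}$, $\fz_0\le\fb_0\oplus\fc_0$ and $\fb_0,\fc_0\ge-\mathbf{1}$ that still $\fz_0\le\fb_1\oplus\fc_1$; finally recenter by the now-controlled scalars $\fb_1(\mu),\fc_1(\nu)\in[-1,0]$. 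No duality or regular-convexity argument is used inside the lemma --- only the lattice operation $\wedge$, which is available in the bidual because the bidual of an $AM$-space is again an $AM$-space.
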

 \begin{proof} The proof
 of the first statement is simpler, so
 we only prove the second estimate.
 Also by scaling it suffices to consider the 
 case $R=1$.
 
  Set  $\fK:=  \fB +\fC + (\cC_b^{\prime\prime})_-$
  and fix $\fz_0 \in \fK \cap B^{\prime\prime}_1$. Then, there are $\fb_0 \in \cC_x^{\prime\prime}$,
 $\fc_0 \in \cC_y^{\prime\prime}$ and $\fn_0 \in 
 (\cC_b^\pp)_-$ so that
 $$
 \fz_0= \fb_0 \oplus \fc_0 + \fn_0\mbox{ and }\fb_0(\mu)=\fc_0(\nu)=0.
 $$
 Then, for any $\alpha \in \cM_1(X)$ and  $\beta \in \cM_1(Y)$,
 \begin{align*}
 \fb_0(\alpha)&= \fb_0(\alpha)+\fc_0(\nu) \ge \fz_0(\alpha \times \nu) \ge - 1\\
 \fc_0(\beta)&= \fb_0(\mu)+\fc_0(\beta) \ge \fz_0(\mu \times \beta) \ge - 1.
 \end{align*}
 Hence, $\fb_0, \fc_0 \ge -1$.
 Set,
 \begin{align*}
 \fb_1&:= \fb_0 \wedge  {\bf{2}}, 
 \quad \fb_2:= \fb_1 -{\bf{\fb_1(\mu)}} {\bf{1}},\\
 \fc_1&:= \fc_0 \wedge  {\bf{2}}, 
 \quad \fc_2:= \fc_1 -{\bf{\fc_1(\nu)}} {\bf{1}}.
 \end{align*}
 We know that $\fz_0 \le \fb_0 \oplus \fc_0$, $\fz_0 \le {\bf{1}}$
 and $\fb_0, \fc_0 \ge -{\bf{1}}$.  
  Using all these we conclude that
  $\fz_0 \le \fb_1 \oplus \fc_1$.
  Also, $\fb_1(\mu) \le \fb_0(\mu) =0$ and
  $\fc_1(\nu) \le \fc_0(\nu) =0$.  These imply that
  $\fb_1(\mu), \fc_1(\nu) \in [-1,0]$.  
  Therefore,
  $$
  \fz_0 \le \fb_1 \oplus \fc_1 \le \fb_2 \oplus \fc_2 =
   \fb_2 \oplus {\bf{0}} + {\bf{0}} \oplus \fc_2.
   $$
   Moreover, 
   $$
   \fb_2 = \fb_0 \wedge  {\bf{2}} - \fb_1(\mu){\bf{1}} 
   \le {\bf{2}} + {\bf{1}} \le {\bf{3}}, \quad
   \fb_2 = \fb_0 \wedge  {\bf{2}} - \fb_1(\mu) {\bf{1}} 
   \ge \fb_0 \wedge {\bf{2}} \ge -{\bf{1}}.
   $$
   Hence, $ \fb_2 \oplus {\bf{0}}  \in \fB_3$.  Similarly,
   ${\bf{0}} \oplus \fc_2 \in \fC_3$.
    
 Finally, set $\fn_2 :=\fz_0 -\fb_2  \oplus \fc_2$.  It is now
 clear that $ 0\ge \fn_2 \in B^{\prime\prime}_7$.
  \end{proof}
 \vspace{4pt}
  \subsection{Duality in the bidual}
  \label{ss.bidual}
 We now obtain a complete characterization
 of the dual elements.  
  \begin{prp}
 \label{p.kot}
   We have $\fH_{ot}= \fB + \fC$ and
 $\fK_{ot} = \fB +\fC + (\cC_b^{\prime\prime})_-=
\fH_{ot}+ (\cC^{\prime\prime}_b)_-$. In particular,
for $\fa \in \cC_b^{\prime\prime}$,
\begin{align*}
\dD_{ot}(\fa)&:= \dD(\fa;\fK_{ot}) = \dD(\fa;\fB +\fC + (\cC_b^{\prime\prime})_-) \\
&= \min \left\{ \ c \in  \R\ : \
\exists\ \fb \in \fB, \  \fc \in \fC,\ 
{\mbox{such that}}\ \
c \ {\bf{1}}+ \fb \oplus \fc  \ge \fa\ \right\}\\
&=\dP_{ot}(\fa):=\dP(\fa;\cQ_{ot})
\end{align*}
 \end{prp}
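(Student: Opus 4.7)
The plan is to exploit Lemma \ref{l.connect}, which already identifies $\fK_{ot}$ with the weak$^*$ closure of $\fH_{ot}+(\cC_b^{\prime\prime})_-$, and reduce the entire statement to showing that the two candidate sums in the bidual are already weak$^*$ closed. The key analytic inputs are the sandwich estimates of Lemma \ref{l.technical} together with two classical facts from \cite{KS}: (i) a convex set is weak$^*$ closed iff its intersection with every closed ball is (Krein--\v{S}mulian), and (ii) the sum of bounded regularly convex sets is regularly convex.

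I would first establish $\fH_{ot}=\fB+\fC$. The inclusion $\fB+\fC\subset\fH_{ot}$ is immediate from Lemma \ref{l.aot}: any $\eta\in\cA_{ot}$ satisfies $\eta_x=\eta(\1)\mu$ and $\eta_y=\eta(\1)\nu$, so $(\fb\oplus\fc)(\eta)=\eta(\1)(\fb(\mu)+\fc(\nu))=0$ whenever $\fb(\mu)=\fc(\nu)=0$. Since $\fI(\cH_{ot})\subset\fB+\fC$, the reverse inclusion will follow from the bipolar identity $\fH_{ot}=\overline{\fI(\cH_{ot})}^{\,*}$ as soon as $\fB+\fC$ is known to be weak$^*$ closed. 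To check the latter I apply Krein--\v{S}mulian and invoke the first part of Lemma \ref{l.technical}: $(\fB+\fC)\cap B^{\prime\prime}_R\subset \fB_R+\fC_R$, and by Lemma \ref{l.erc} both summands are bounded and regularly convex, so fact (ii) makes their sum regularly convex; intersecting with the weak$^*$ closed ball $B^{\prime\prime}_R$ gives closedness of the truncation.

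The same scheme then handles $\fK_{ot}$. I would show that $\fH_{ot}+(\cC_b^{\prime\prime})_-=\fB+\fC+(\cC_b^{\prime\prime})_-$ is weak$^*$ closed: again by Krein--\v{S}mulian it suffices to examine the truncation at each radius $R$, and the second half of Lemma \ref{l.technical} bounds it inside $\fB_{3R}+\fC_{3R}+\bigl[(\cC_b^{\prime\prime})_-\cap B^{\prime\prime}_{7R}\bigr]$. All three summands are bounded and regularly convex (the negative cone is weak$^*$ closed, hence so is its intersection with a ball), so fact (ii) again yields a regularly convex sum. Lemma \ref{l.connect} identifies this weak$^*$ closed set with $\fK_{ot}$, giving $\fK_{ot}=\fH_{ot}+(\cC_b^{\prime\prime})_-=\fB+\fC+(\cC_b^{\prime\prime})_-$. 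The equality $\dP_{ot}(\fa)=\dD_{ot}(\fa)$ on $\cC_b^{\prime\prime}$ then follows from Theorem \ref{t.main} applied with $\fL=\cC_b^{\prime\prime}$ and $\fK=\fK_{ot}$, and the infimum is attained (becomes a minimum) by the attainment clause of Lemma \ref{l.properties}, since $\fK_{ot}$ is strongly closed.

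The main obstacle is precisely the passage from the unbounded sums to their bounded truncations: as the authors stress, sums of unbounded regularly convex sets need not be regularly convex, so everything hinges on the quantitative decomposition in Lemma \ref{l.technical}. Once those uniform pointwise bounds on the pieces $\fb,\fc,\fn$ are in hand, the closedness arguments, and hence the duality, reduce to routine applications of facts (i)--(ii).
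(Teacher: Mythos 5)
Your proof is correct, but for the key identity $\fH_{ot}=\fB+\fC$ you take a genuinely different route from the paper. The paper proves the inclusion $\fH_{ot}\subset\fB+\fC$ by an explicit construction: given $\fa_0\in\cA_{ot}^\perp$, it sets $\fb_0(\alpha):=\fa_0(\alpha\times\nu)$, $\fc_0(\beta):=\fa_0(\mu\times\beta)$ and verifies $\fa_0=\fb_0\oplus\fc_0$ by showing that the correction $\tilde\eta=\eta_x\times\nu+\mu\times\eta_y-\eta$ always lies in $\cA_{ot}$ (Lemma \ref{l.aot}), so $\fa_0$ annihilates it. You instead prove that $\fB+\fC$ is weak$^*$ closed -- using the \emph{first} estimate of Lemma \ref{l.technical}, which the paper states but never invokes in this proof -- and then deduce $\fH_{ot}=\overline{\fI(\cH_{ot})}^{\ *}\subset\overline{\fB+\fC}^{\ *}=\fB+\fC$ from the sandwich $\fI(\cH_{ot})\subset\fB+\fC\subset\fH_{ot}$. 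Your argument is shorter and avoids any explicit decomposition, at the cost of being non-constructive: the paper's version produces the dual pair $(\fb_0,\fc_0)$ explicitly (a construction re-used, in modified form, in the proof of Proposition \ref{p.lsonsuz}). Both variants rely on the same machinery for the rest of the statement: Krein--\v{S}mulian plus the second estimate of Lemma \ref{l.technical} to get weak$^*$ closedness of $\fB+\fC+(\cC_b^{\prime\prime})_-$, Lemma \ref{l.connect} to identify this with $\fK_{ot}$, and Theorem \ref{t.main} for the duality and attainment.

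One small wording caveat: when you appeal to Lemma \ref{l.condition} it suffices to have a regularly convex $\fL_R$ with $\fK\cap B_R^{\prime\prime}\subset\fL_R\subset\fK$; the phrase \emph{intersecting with the weak$^*$ closed ball $B_R^{\prime\prime}$ gives closedness of the truncation} is an unnecessary extra step and slightly misstates the mechanism, though it does not affect correctness.
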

\begin{proof} Set
 $\fK:=  \fB +\fC + (\cC_b^{\prime\prime})_-$.
 It is clear that $\fK \subset \fK_{ot}$. 
  \vspace{4pt}

{\em{Step 1.}} ({\em{Regular convexity  of $\fK$}}). 

 In view of Lemma \ref{l.technical},
 for any $R>0$,
 $$
 \fK \cap B^{\prime\prime}_R \subset \fK_R:= 
 \fB_{3R} + \fC_{3R} + (\cC_b^{\prime\prime})_- \cap B^{\prime\prime}_{7R}.
 $$
It is shown in Lemma \ref{l.erc}
that $\fB$ and $\fC$ are regularly convex.
It is clear that $(C_b^\pp)_-$ is also
regularly convex.
Hence, $\fB_{3R}$ and $\fC_{3R}$ and as well as
$\cC^{\prime\prime}_-\cap B^{\prime\prime}_{7R}$ are regularly
convex for each $R>0$.  By Theorem 7 of \cite{KS},
$\fK_R$ is then regularly convex.  Consequently, by Lemma \ref{l.condition},
$\fK$ is regularly convex. 
  \vspace{4pt}

{\em{Step 2.}} ($\fH_{ot}= \fB + \fC$). 

Let $\fa_0 \in \fH_{ot} = \cA_{ot}^\perp$.  For 
$\alpha \in \cC_x^\prime$, $\beta \in \cC_y^\prime$ define
$$
\fb_0(\alpha):= \fa_0(\alpha \times \nu), \quad
\fc_0(\beta):= \fa_0(\mu \times \beta).
$$
Then, for any $\eta \in \cC_b^\prime$,
$$
 (\fb_0 \oplus \fc_0)(\eta)-\fa_0(\eta)= \fa_0(\tilde \eta),
\quad
{\mbox{where}}
 \quad
 \tilde \eta= \eta_x \times \nu+\mu \times \eta_y-\eta.
 $$
 One can directly check that $\tilde \eta_x=\mu$
and $\tilde \eta_y=\nu$.  Hence, by Lemma \ref{l.aot},
$\tilde \eta \in \cA_{ot}$
and consequently $\fa_0(\tilde \eta)=0$.  This shows that
$\fa_0=\fb_0 \oplus \fc_0$ or equivalently
$\fH_{ot} \subset \fB + \fC$. The opposite inclusion
is immediate.
\vspace{4pt}

{\em{Step 3.}} ({\em{Conclusion}}). 

In view of Lemma \ref{l.connect}, $\fK_{ot}=\fK_{\cQ_{ot}}$ is the
weak$^*$ closure of $\fH_{ot} +(\cC^{\prime\prime}_b)_-$.  Also by the second step,
$\fH_{ot} +(\cC^{\prime\prime}_b)_- 
=\fB +\fC + (\cC_b^{\prime\prime})_- =:\fK$.  In the first step,
 it is shown that $\fK$ is regularly convex and consequently is
weak$^*$ closed.  Therefore, the weak$^*$ closure of 
$\fK=\fH_{ot} +(\cC_b^{\prime\prime})_-$ is equal to itself.

The duality statement follows from  Theorem \ref{t.main}. 
\end{proof}
\vspace{2pt} 
\subsection{Duality in $\cB_b(\Om)$}
\label{ss.lsonsuz}
We close this section by proving the duality
on $\cB_b(\Om)$, the set of all bounded,
Borel measurable functions.
Let $\cN_{ot}$ be the set of all $\cQ_{ot}$ polar sets, i.e.,
a Borel set $N$ is $\cQ_{ot}$ polar when $\eta(N)=0$ for 
every $\eta \in \cQ_{ot}$.  Further
let $\cZ_{ot}$ be the set of all functions $\zeta \in \cB_b(\Om)$
such that $\{\zeta \neq 0\} \in \cN_{ot}$. Finally, set
\begin{align*}
\cH_{ot}^\infty&:=\left\{ h \oplus g\ : \ h\in \cB_b(X),\ 
g \in \cB_b(Y)\quad
{\mbox{and}}\quad \mu(h)=\nu(g)=0 \right\} \\
\hat \cH_{ot}^\infty &:=\left\{ \zeta + h \oplus g \ : \ \zeta \in \cZ_{ot},\
h \oplus g \in \cH_{ot}^\infty\ \right\}.
\end{align*}
We follow the lecture notes of Kaplan \cite{Kap}
to characterize the dual set.

\begin{prp}[{\bf{Duality in $\cB_b(\Om)$}}]
\label{p.lsonsuz}
For any $\xi \in \cB_b(\Om)$,
\begin{align*}
\dP_{ot}(\xi)&= \dD\left(\xi; \hat \cH_{ot}^\infty+ \cB_b(\Om)_-\right)\\
&= \min \left\{ \ c \in  \R\ : \
\exists\ \zeta \in \cZ_{ot},\
h\oplus g \in \cH_{ot}^\infty,\ 
{\mbox{such that}}\ \
c \ {\bf{1}}+\zeta+ h \oplus g  \ge \xi\ \right\}. 
\end{align*}
\end{prp}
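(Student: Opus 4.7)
The plan is to derive the identity from the bidual duality of Proposition~\ref{p.kot} by representing the abstract dual components in $\fB$ and $\fC$ by concrete bounded Borel functions on $X$ and $Y$. Weak duality ($\dD \ge \dP_{ot}(\xi)$) follows immediately: for any triple $(c, \zeta, h\oplus g)$ satisfying $c{\bf 1} + \zeta + h\oplus g \ge \xi$ and any $\eta \in \cQ_{ot}$, the polarity of $\{\zeta \ne 0\}$ gives $\eta(\zeta) = 0$, and Lemma~\ref{l.aot} gives $\eta(h\oplus g) = \mu(h) + \nu(g) = 0$, so integrating the constraint yields $c \ge \eta(\xi)$.

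For the reverse inequality I would lift $\xi \in \cB_b(\Omega)$ to $\tilde{\fI}(\xi) \in \cC_b^{\prime\prime}$ by $\tilde{\fI}(\xi)(\eta) := \int \xi\,d\eta$, with the integral interpreted in the finitely additive sense (cf.\ Kaplan~\cite{Kap}). This extension restricts to ordinary Lebesgue integration on $\cQ_{ot} \subset \cM_1(\Omega)$, so $\dP(\tilde{\fI}(\xi); \cQ_{ot}) = \dP_{ot}(\xi)$. Setting $c := \dP_{ot}(\xi)$, Proposition~\ref{p.kot} produces $\fb \in \cC_x^{\prime\prime}$ and $\fc \in \cC_y^{\prime\prime}$ with $\fb(\mu) = \fc(\nu) = 0$ such that
$$c{\bf 1} + \fb \oplus \fc \ge \tilde{\fI}(\xi) \quad \text{in } \cC_b^{\prime\prime}.$$

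My next step is a Borel realization of $\fb$ and $\fc$. The positive $\mu$-absolutely continuous measures in $\cC_x^\prime$ form, via Radon--Nikodym, a subspace isometric to $L^1(\mu)$; the restriction of $\fb$ is a bounded linear functional on $L^1(\mu)$, hence by Riesz duality equals integration against some $h \in L^\infty(\mu)$. After modifying on a $\mu$-null set one obtains $h \in \cB_b(X)$ with $\mu(h) = \fb(\mu) = 0$; a symmetric construction produces $g \in \cB_b(Y)$ with $\nu(g) = 0$. Set $f := \xi - c - h\oplus g \in \cB_b(\Omega)$ and $N := \{f > 0\}$, which is Borel. For any $\eta_0 \in \cQ_{ot}$ and any Borel $A \subset \Omega$, the restriction $\eta_0|_A$ is a positive element of $\cC_b^\prime$ whose marginals are dominated by $\mu$ and $\nu$ (since $(\eta_0)_x=\mu$, $(\eta_0)_y=\nu$), so
$$\fb((\eta_0|_A)_x) = \int_A h(x)\,d\eta_0, \qquad \fc((\eta_0|_A)_y) = \int_A g(y)\,d\eta_0.$$
Evaluating the bidual inequality at $\eta_0|_A$ then yields $\int_A f\,d\eta_0 \le 0$, and taking $A = N$ together with $f > 0$ on $N$ forces $\eta_0(N) = 0$. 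Hence $N$ is $\cQ_{ot}$-polar.

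Finally, $\zeta := f \cdot {\bf 1}_N \in \cB_b(\Omega)$ is supported on the polar Borel set $N$ and so lies in $\cZ_{ot}$, while on $N^c$ one has $f \le 0$, giving $c{\bf 1} + \zeta + h \oplus g = \xi + (-f){\bf 1}_{N^c} \ge \xi$, so the infimum is attained. The only non-routine ingredient is the Borel realization of $\fb$ and $\fc$; the plan sidesteps the deeper structural decomposition of $\cC_b^{\prime\prime}$ in Kaplan~\cite{Kap} by exploiting only the absolutely continuous slices of $\cC_x^\prime$ and $\cC_y^\prime$, on which the classical $L^1$--$L^\infty$ duality already suffices.
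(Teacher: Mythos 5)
Your proposal is correct and follows essentially the same route as the paper: obtain $\fb,\fc$ from Proposition~\ref{p.kot}, realize them as $L^\infty$-functions $h,g$ via the $L^1$--$L^\infty$ duality on the $\mu$- and $\nu$-absolutely continuous slices of $\cC_x^\prime,\cC_y^\prime$, and then test the bidual inequality against the restriction of $\eta\in\cQ_{ot}$ to the bad set $N=\{f>0\}$ to show $N$ is polar. The one cosmetic improvement over the paper's exposition is that you deduce $(\eta_0|_A)_x\ll\mu$ directly from the domination $(\eta_0|_A)_x\le\mu$, whereas the paper routes through an explicit disintegration of $\eta$, which is not actually needed for the argument.
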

\begin{proof} In view of Theorem \ref{t.main},
it suffices to show the following,
$$
\cK_{ot}:= \left\{ \xi \in \cB_b(\Om) :
\dP_{ot}(\xi) \le 0\right\}
= \hat \cH_{ot}^\infty+ \cB_b(\Om)_-.
$$
It is clear that $\hat \cH_{ot}^\infty + \cB_b(\Om)_-\subset \cK_{ot}$.  
We continue by proving the opposite inclusion.
\vspace{2pt}

\noindent
{\em{Step 1.}} 
Fix $\xi \in \cK_{ot}$.  
Then, $\fI(\xi) \in \fK_{ot}$
 and in view of Proposition \ref{p.kot},
there are $\fb \in \cC_x^\pp$, $\fc \in \cC_x^\pp$ satisfying,
$\fI(\xi) \le \fb \oplus \fc$ and 
$\fb(\mu)=\fc(\nu)=0$.
Consider the map,
$$
\cG_\fb: H\in \cL^1(\Om,\mu) \mapsto \cG_\fb(H):= \fb(\mu_H),
\quad
{\mbox{where}}\quad 
\mu_H(A):= \int_A H(x) \ d\mu(x).
$$
It is immediate that $\cG_\fb$ is a bounded linear map 
on $\cL^1(\Om,\mu)$.  Hence, there exists $h_\xi \in \cL^\infty(X,\mu)$
satisfying,
$$
\cG_\fb(H)= \int_X h_\xi(x) H(x) d\mu(x) = \mu_H(h_\xi).
$$
We rewrite the above identity as follows,
$$
\fb(\alpha)= \alpha(h_\xi), \quad \forall \alpha \in \cC_x^\p\ \
{\mbox{and}}\ \  |\alpha| \ll \mu.
$$
Similarly there is
$g_\xi \in \cL^\infty(Y,\nu)$
such that
$$
\fc(\beta)= \beta(g_\xi), \quad \forall\beta \in \cC_y^\p \ \
{\mbox{and}}\ \  |\beta| \ll \nu.
$$
We fix pointwise representatives of 
$h_\xi$ and $g_\xi$ and set
$$
\cN_\xi:=\left\{ \om \in \Om \ :\
\zeta_\xi(\om) >0 \ \right\}, \quad
{\mbox{where}}
\quad
\zeta_\xi(\om):= \left[ \xi- h_\xi\oplus g_\xi\right](\om).
$$
\vspace{4pt}

\noindent
{\em{Step 2.}}
In this step, we show that $(\zeta_\xi)^+ \in \cZ_{ot}$.

Fix $\eta \in \cQ_{ot}$.  Let 
$\{\nu(x,\cdot)\}_{x\in X} \subset \cM_1(Y)$
and $\{\mu(y,\cdot)\}_{y\in Y} \subset \cM_1(X)$
be Borel measurable families of
probability measures 
satisfying
$$
\int_\Om  \zeta d\eta
 = \int_X \left[\int_Y \zeta(x,y) \nu(x, dy)\right] \mu(dx)
 = \int_Y \left[\int_X \zeta(x,y) \mu(y, dx)\right] \nu(dy),
$$
for every $\zeta \in \cB_b(\Om)$. 
Since $\eta$ is a probability measure,
we may define $\eta_\xi \in \cC_b^\p$ by,
$$
\eta_\xi(f):= \int_{\cN_\xi} \ f \ d\eta,
\quad \forall f \in \cC_b.
$$
Then, one can directly show that for every Borel
set $A \subset X$,
$$
\left(\eta_\xi\right)_x(A)= \int_A 
\left[\int_Y \chi_{\cN_\xi}(x,y) \nu(x, dy)\right] \mu(dx).
$$
Since $\left(\eta_\xi\right)_x$ is absolutely
continuous with respect to $\mu$, the construction of $h_\xi$
implies that
$$
(\fb\oplus {\bf{0}})(\eta_\xi) = \fb(\left(\eta_\xi\right)_x)
= \left(\eta_\xi\right)_x(h_\xi)
=\int_\Om h_\xi(x) \chi_{\cN_\xi}(x,y) \eta(dx,dy).
$$
Similarly, one can show that,
$$
({\bf{0}}\oplus \fc)(\eta_\xi) = \fb(\left(\eta_\xi\right)_y)
= \left(\eta_\xi\right)_y(g_\xi)
= \int_\Om g_\xi(y) \chi_{\cN_\xi}(x,y) \eta(dx,dy).
$$
These imply that
$$
0 \le (\fb \oplus \fc - \fI(\xi))(\eta_\xi)
= \int_\Om [(h_\xi \oplus g_\xi) -\xi]\ \chi_{\cN_\xi} d\eta
=\int -\ \zeta_\xi \chi_{\{ \zeta_\xi >0\}} d\eta
= - \eta((\zeta_\xi)^+) \le 0.
$$
Therefore, $\eta((\zeta_\xi)^+)=0$ for every $\eta \in \cQ_{ot}$
and $(\zeta_\xi)^+\in \cZ_{ot}$.
\vspace{2pt}

\noindent
{\em{Step 3.}} 
The definition of $\zeta_\xi$ implies that
$$
\xi= [\xi-h_\xi\oplus g_\xi] + h_\xi\oplus g_\xi
\le (\zeta_\xi)^+ + h_\xi\oplus g_\xi.
$$
Since $\mu(h_\xi)=\nu(g_\xi)=0$, this
proves that $\xi \in \hat \cH_{ot}^\infty$.
\end{proof}
\vspace{4pt}
\begin{rem}
\label{r.disclaimer}
{\rm{ The above dual problem with the hedging set
$\hat \cH_{ot}^\infty$ can also be seen
as  quasi-sure super-replication in the sense
defined in \cite{GLT,STZ,STZ1}.  Indeed,
we say two functions
$\ell, \xi$ satisfy $\ell \ge \xi$, $\cQ_{ot}$ quasi-surely 
and write $\ell\ge \xi,$ $\cQ_{ot}-q.s.$,
if $\eta(\{  \ell < \xi \}) =0$ for every $\eta \in \cQ_{ot}$
or equivalently, if the set $\{\ell < \xi \}$ is a $\cQ_{ot}$ polar set.
Then, we have the following immediate representation
of the dual problem,
$$
\dD(\xi;\hat \cH^\infty_{ot}+\cB_b(\Om)_-)
=\inf\{ c \in \R\ :\
\exists h\oplus g \in \cH_{ot}^\infty, \ \ 
{\mbox{s.t.}}\ \ 
c \1 +h\oplus g \ge \xi, \ \cQ_{ot}-q.s.\ \}.
$$

In the classical paper of Kellerer \cite{Kel},
the duality is proved with the hedging set $\cH_{ot}^\infty$
without augmenting it with the $\cQ_{ot}$
polar sets.  In particular, Kellerer's duality result
shows that every $\cQ_{ot}$
polar set $N$ is dominated from above 
by the sum of a $\mu$ null set $A$ and a 
$\nu$ null set $B$, i.e.,
\begin{equation}
\label{e.qnulls}
\chi_N(x,y) \le \chi_A(x)+ \chi_B(y), \quad
{\mbox{and}}
\quad
\mu(A)=\nu(B)=0.
\end{equation}
Kellerer proved the above result
by using the classical Choquet 
capacity theory.  Indeed,
the result would follow
if the primal functional
is shown to have certain
regularity properties
as assumed in the 
Choquet Theorem. 
Then one shows these properties
using the Lusin theorem and other approximations.

On the other hand, our results imply that 
there are $\fb \in \cC_x^\pp,
\fc \in \cC_y^\pp$ satisfying,
$$
\fI(\chi_N) \le \fb \oplus \fc, \quad
{\mbox{and}}
\quad
\fb(\mu)=\fc(\nu)=0.
$$
To prove \reff{e.qnulls} from the above statement
is an interesting analytical question.
In particular,
it is not clear if the Kellerer approach 
via the Choquet capacity theory
is the only possible way.
We leave these questions to further research
and do not pursue them here.
}}
\qed
\end{rem}
\vspace{4pt}

\section{Constrained Optimal Transport}
\label{s.cot}

In this section, we investigate an extension of
the classical optimal transport.  
In this extension,
we are given a finite subset 
$\{f_1,\ldots, f_N\} \subset \cC_b$.  
We set $\cQ_0:= \cQ_{ot}$ and for $k=1,\ldots, N$,
define,
$$
\cQ_k:= \cH_k^\perp \cap \partial B^\p_+,
\quad
{\mbox{where}}
\quad
\cH_k:=  \left\{ f + \sum_{i=1}^k a_i f_i\ :\ f \in \cH_{ot}, \ \ a_i \in \R\ 
\right\}.
$$
We make the following structural assumption.
\begin{asm}
\label{a.structure}
For $k=1,\ldots,N$, we assume that
$$
\inf_{\eta \in \cQ_{k-1}} \eta(f_k) < 0 <
\sup_{\eta \in \cQ_{k-1}} \eta(f_k).
$$
\end{asm}
\vspace{2pt}

\begin{rem}
\label{r.neden}
{\rm{The above assumption is equivalent
to the following,
$$
-\dP(-f_k;\cQ_{k-1}) <0 <\dP(f_k;\cQ_{k-1}) ,
$$
for every $k=1,2,\ldots,N$.  In this assumption, 
the value
zero is not important.  Indeed, if $\bar {f_k}$
satisfies
$$
-\dP(-\bar{f_k};\cQ_{k-1}) <\dP(\bar{f_k};\cQ_{k-1}),
$$
then there exists a constant $b_k$ so that 
$f_k:= \bar{f_k}-b_k$ satisfies the Assumption \ref{a.structure}.

Moreover, Assumption \ref{a.structure} implies that $\cQ_k$
is non-empty and satisfies the Assumption \ref{a.main}.
Conversely, the inequality
$$
-\dP(-f_k;\cQ_{k-1}) \le 0  \le \dP(f_k;\cQ_{k-1}),
$$
is necessary for $\cQ_k$ to be non-empty.}}
\qed
\end{rem}
\vspace{2pt}

Set $\fH_0:=\fH_{ot}$,
$\cQ_{cot}:= \cQ_N$, 
$\cH_{cot}:= \cH_N$,
$\fH_{cot}:= (\cH_{cot}^\perp)^\perp$,
$$
\fK_{cot}:= \left\{ \fz \in \cC_b^\pp\ :\ \fz(\eta)\le 0, 
\  \forall \eta \in \cQ_{cot}\ \right\}, 
$$
and let $\fF_k$ be the subspace spanned by $\fI(f_k)$.

\begin{prp}
\label{p.cot}
Suppose that Assumption 
\ref{a.structure} holds.
Then, 
$$
\fH_{cot}= \fH_{ot}+ \sum_{i=1}^N \fF_i, \qquad
\fK_{cot}=\fH_{cot}+\left(\cC_b^\pp\right)_-.
$$
In particular,
\begin{align*}
\dP(\fa;\cQ_{cot})&= \dD(\fa;\fH_{cot}+\left(\cC_b^\pp\right)_-)\\
&= \min \left\{ c \in \R\ :\
\exists \fh \in \cC_x^\pp,\ \ \fg \in \cC_y^\pp, \ \ a_1,\ldots, a_n \in \R \right. \\ 
&\hspace{100pt}\left.{\mbox{such that}}\ \ 
\fh\oplus \fg +\sum_{k=1}^N a_k \fI(f_k) \ge \fa\ \right\}.
\end{align*} 
\end{prp}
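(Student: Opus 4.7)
The plan is to follow the template of Proposition \ref{p.kot}, proving both claimed identities in one shot by establishing that $\fH_{ot}+\sum_{i=1}^N\fF_i+(\cC_b^{\pp})_-$ is weak$^*$ closed. By Lemma \ref{l.connect}, $\fK_{cot}$ is the weak$^*$ closure of $\fI(\cH_{cot})+(\cC_b^{\pp})_-$, and since $\fI(\cH_{cot})=\fI(\cH_{ot})+\sum_{i=1}^N\R\,\fI(f_i)\subset\fH_{ot}+\sum_{i=1}^N\fF_i$, weak$^*$ closedness of the latter cone immediately yields
$$\fK_{cot}=\fH_{ot}+\sum_{i=1}^N\fF_i+(\cC_b^{\pp})_-.$$
The purely-linear identity $\fH_{cot}=\fH_{ot}+\sum\fF_i$ is obtained by running the same argument with $(\cC_b^{\pp})_-$ omitted, the reverse inclusions $\fH_{ot}\subset\fH_{cot}$ and $\fF_i\subset\fH_{cot}$ (because $f_i\in\cH_{cot}$) being immediate.

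For the weak$^*$ closedness I would invoke the Krein--\v{S}mulian criterion of Lemma \ref{l.condition}, so it suffices to show that for each $R>0$ the set $(\fH_{ot}+\sum\fF_i+(\cC_b^{\pp})_-)\cap B^{\pp}_R$ is regularly convex. Given $\fz=\fh+\sum_{i=1}^N a_i\fI(f_i)+\fn$ with $\fh\in\fH_{ot}$, $\fn\in(\cC_b^{\pp})_-$ and $\|\fz\|_{\cX^{\pp}}\le R$, the plan is to derive an \emph{a priori} bound $|a_i|\le C_i(R)$ and consequently a norm bound on $\fh+\fn\in\fK_{ot}$; the latter then admits a bounded decomposition into $\fB$, $\fC$ and $(\cC_b^{\pp})_-$ by Lemma \ref{l.technical} and Proposition \ref{p.kot}. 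Once every piece is confined to a bounded regularly convex set, Theorem 7 of \cite{KS} renders their sum regularly convex, and intersecting with $B^{\pp}_R$ preserves regular convexity.

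The crux, and the only place Assumption \ref{a.structure} enters, is a reverse induction on $k=N,N-1,\ldots,1$ that produces the coefficient bounds. For any $\eta\in\cQ_{k-1}$ one has $\fh(\eta)=0$, $\eta(f_i)=0$ for $i<k$ and $\fn(\eta)\le 0$, so
$$a_k\,\eta(f_k)+\sum_{i>k}a_i\,\eta(f_i)\ \ge\ \fz(\eta)\ \ge\ -R.$$
At $k=N$ the second sum is empty, and Assumption \ref{a.structure} furnishes $\eta_\pm\in\cQ_{N-1}$ with $\eta_-(f_N)<0<\eta_+(f_N)$, which yields a two-sided bound on $a_N$. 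Subtracting $a_N\fI(f_N)$ from $\fz$ and updating the norm bound, one then applies the same argument with $\cQ_{N-2}$ to control $a_{N-1}$, and so on down to $k=1$. This cascade explains why the strict sign condition is imposed at every level $\cQ_{k-1}$ and not merely at $\cQ_0$: otherwise the inequality above could not pin $a_k$ from both sides once the negative cone is present.

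With weak$^*$ closedness in hand and the two identities established, Theorem \ref{t.main} applied with $\fL=\cC_b^{\pp}$ and $\fK=\fH_{cot}+(\cC_b^{\pp})_-$ delivers the duality together with dual attainment. The concrete form with $\fh\in\cC_x^{\pp}$, $\fg\in\cC_y^{\pp}$ is then obtained by invoking $\fH_{ot}=\fB+\fC$ from Proposition \ref{p.kot} and absorbing the free scalar $c$ into $\fh(\mu)+\fg(\nu)$, which is legitimate since $\fh$ and $\fg$ are not constrained to annihilate $\mu$ and $\nu$. I expect the main obstacle to be the inductive coefficient bound: without the quantitative sign condition of Assumption \ref{a.structure} at every level, the algebraic sum of the weak$^*$ closed set $\fK_{ot}$ and the finite-dimensional span $\sum\fF_i$ need not remain weak$^*$ closed.
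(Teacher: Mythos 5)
Your proof is correct and takes essentially the same approach as the paper: reduce to weak$^*$ closedness of $\fH_{ot}+\sum_{i=1}^N\fF_i+(\cC_b^{\pp})_-$ via Lemma \ref{l.connect}, bound the scalars $a_i$ using Assumption \ref{a.structure}, decompose the residual $\fK_{ot}$-piece via Lemma \ref{l.technical} and Proposition \ref{p.kot}, invoke Lemma \ref{l.condition} with Theorem~7 of \cite{KS}, and close with Theorem \ref{t.main}. The only difference is organizational: the paper runs a forward induction on $k$, proving $\fK_k=\fK_{k-1}+\fF_k$ is regularly convex at each step so that only one coefficient must be bounded per step against the regularly convex set $\fK_{k-1}$ already in hand, whereas your single-pass reverse cascade unrolls that induction and accumulates all $N$ coefficient bounds at once before decomposing $\fh+\fn\in\fK_{ot}$; the two are logically equivalent and yield the same estimates.
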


\begin{proof} Set 
$$
\fH_k:= (\cH_k^\perp)^\perp,\quad
\fK_k:=\{ \fz\in \cC_b^\pp\ :\ \fz(\eta) \le 0, \ \forall \eta \in \cQ_k\ \},
\qquad k=1,\ldots,N.
$$
It is clear that $\fH_{cot}= \fH_N$ and $\fK_{cot}=\fK_N$.
  
We shall prove  by induction that
$$
\fH_k= \fH_{ot}+ \sum_{i=1}^k
\fF_i, \qquad
\fK_k=\fH_k+\left(\cC_b^\pp\right)_-, \qquad
k=1,\ldots, N.
 $$
We know that 
the above statements
hold for $k=0$.  Indeed
$\fH_0=\fH_{ot}$ by definition
and by Proposition \ref{p.kot},
$\fK_0=\fH_{ot}+\left(\cC_b^\pp\right)_- $.
Suppose now that the claim holds for $k-1$
with some $k\ge 1$.
Set $\fH:= \fH_{k-1}+\fF_k$ and
$\fK:= \fK_{k-1}+\fF_k$. 
We claim that both $\fH$ and $\fK$ are
weak$^*$ closed.
Since both proofs are similar, we prove
only the second statement
 by an application of Lemma \ref{l.condition}.

Fix an arbitrary $\fa \in \fK \cap B^\pp_1$.  Then, there are
$\fz_{k-1} \in \fK_{k-1}$ and $a_k \in \R$ satisfying,
$$
\fa= \fz_{k-1} + a_k  \fI(f_k).
$$
Since $\fK_{k-1}=\fH_{k-1}+\left(\cC_b^\pp\right)_-$,
there are $\fh_{k-1} \in \fH_{k-1}$ and $\fn_{k-1} \le 0$ such that
$\fz_{k-1}=\fh_{k-1}+\fn_{k-1}$.
Also, Assumption \ref{a.structure} states that
$$
\underline{p}:= \inf_{\eta\in \cQ_{k-1}}\eta(f_k) 
<\ 0 \ <
\overline{p}:= \sup_{\eta\in \cQ_{k-1}}\eta(f_k).
$$

We analyse two cases separately.
First, suppose that $a_k \ge 0$. Then,
$$
a_k \ \underline{p}  \ge \inf_{\eta \in \cQ_{k-1}} 
\left(\fz_{k-1}+a_k \fI(f_k)\right)(\eta) =  \inf_{\eta \in \cQ_{k-1}} \fa(\eta)
\ge -1.
$$
Since $\underline{p} <0$,
$a_k \le 1/(-\underline{p})$.

Next, suppose that $a_k  \le 0$. Then,
$$
a_k \ \overline{p}  \ge \sup_{\eta \in \cQ_{k-1}} 
\left(\fz_{k-1}+a_k \fI(f_k)\right)(\eta) =  \sup_{\eta \in \cQ_{k-1}}  \fa(\eta)
\ge -1.
$$
Hence
$a_k \ge -1/(\overline{p})$.

Combining both cases, we conclude that
$$
|a_k| \le c^*_k:=\max\left\{ \frac{1}{\overline{p}}\ ,\ 
\frac{1}{-\underline{p}}\ \right\}.
$$
Therefore, 
$$
\left\| \fz_{k-1}\right\|_\infty \le 1+ c^*_k \|f_k\|_\infty.
$$
We now apply Lemma \ref{l.condition} of the appendix
to conclude that $\fK$ is regularly convex. Hence,
$\fK= \fK_k$.  

Since $\fK$ is defined to be 
$\fK_{k-1}+ \fF_k$,
by the induction hypothesis,
$$
\fK_{k-1}= \fH_{k-1} + \left(\cC_b^\pp\right)_-
\quad
\Rightarrow
\quad
\fK_k=\fK= \fH_{k-1} +  \fF_k + \left(\cC_b^\pp\right)_-.
$$
A similar induction argument shows that 
$$
\fH_k= \fH_{k-1} +  \fF_k.
$$
Hence, we conclude that
$$
\fK_k= \fH_k + \left(\cC_b^\pp\right)_-\quad
{\mbox{and}}\quad
\fH_k= \fH_{ot}+\sum_{i=1}^k \fF_i.
$$
The duality statement follows from the above
characterizations, Theorem \ref{t.main} and the fact that $\cQ_{cot}$ satisfies the Assumption \ref{a.main}.
\end{proof}
\vspace{2pt}

We can prove the duality in $\cB_b(\Om)$
as in Proposition \ref{p.lsonsuz}. We state
this result without a proof for completeness.
Let $\cZ_{cot}$ be the set of all bounded functions $\zeta$
such that $\eta(\{ \zeta \neq 0\})=0$ for every
$\eta \in \cQ_{cot}$.  Set
$$
\hat \cH_{cot}^\infty:=
\left\{ \zeta + h\oplus g + \sum_{i=1}^N a_i f_i\ :\ 
\zeta \in \cZ_{cot},\
h\oplus g \in \cH_{ot}^\infty, \ \ a_i \in \R\ 
\right\}.
$$

\begin{cor}
\label{c.lsonsuz}
For any $\xi \in \cB_b(\Om)$,
$$
\dP_{cot}(\xi)= \dD\left(\xi; 
\hat \cH_{cot}^\infty+ \cB_b(\Om)_-\right).
$$
\end{cor}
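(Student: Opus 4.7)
The plan is to adapt the proof of Proposition \ref{p.lsonsuz} to the constrained setting by substituting Proposition \ref{p.cot} for Proposition \ref{p.kot} at the step producing a decomposition in the bidual. Exactly as there, Theorem \ref{t.main} reduces the desired duality to the set identity
$$
\cK_{cot} := \{\xi \in \cB_b(\Om) : \dP_{cot}(\xi) \le 0\} = \hat \cH_{cot}^\infty + \cB_b(\Om)_-.
$$
The inclusion $\supset$ is immediate: every $\zeta \in \cZ_{cot}$, every $h \oplus g \in \cH_{ot}^\infty$, and every $f_k$ is annihilated by each $\eta \in \cQ_{cot}$ (because $\cQ_{cot} \subset \cH_{cot}^\perp$ and the $f_k$ lie in $\cH_{cot}$), while the $\cB_b(\Om)_-$ summand contributes non-positively.

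For the reverse inclusion, fix $\xi \in \cK_{cot}$. Since $\fI(\xi) \in \fK_{cot}$, Proposition \ref{p.cot} supplies $\fb \in \cC_x^{\prime\prime}$, $\fc \in \cC_y^{\prime\prime}$ with $\fb(\mu) = \fc(\nu) = 0$ and scalars $a_1,\ldots,a_N \in \R$ satisfying
$$
\fI(\xi) \le \fb \oplus \fc + \sum_{k=1}^N a_k \fI(f_k).
$$
Following Step 1 of the proof of Proposition \ref{p.lsonsuz} verbatim, the $\cL^1$--$\cL^\infty$ duality applied to $H \mapsto \fb(\mu_H)$ and $G \mapsto \fc(\nu_G)$ produces bounded Borel representatives $h_\xi \in \cL^\infty(X,\mu)$ and $g_\xi \in \cL^\infty(Y,\nu)$ with $\mu(h_\xi) = \nu(g_\xi) = 0$ such that $\fb(\alpha) = \alpha(h_\xi)$ whenever $|\alpha| \ll \mu$, and analogously for $\fc$. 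Define $\zeta_\xi := \xi - h_\xi \oplus g_\xi - \sum_{k=1}^N a_k f_k$ and $\cN_\xi := \{\zeta_\xi > 0\}$.

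The key step is to verify that $(\zeta_\xi)^+ \in \cZ_{cot}$. For any $\eta \in \cQ_{cot}$, the restricted functional $\eta_\xi(f) := \int_{\cN_\xi} f\, d\eta$ has both marginals absolutely continuous with respect to $\mu$ and $\nu$, which legitimises evaluating $\fb$ and $\fc$ against $\eta_\xi$ through the pointwise representatives. Together with $\fI(f_k)(\eta_\xi) = \eta_\xi(f_k)$, this gives
$$
0 \le \left(\fb \oplus \fc + \sum_{k=1}^N a_k \fI(f_k) - \fI(\xi)\right)(\eta_\xi) = \int_{\cN_\xi}\!\left(h_\xi \oplus g_\xi + \sum_{k=1}^N a_k f_k - \xi\right)\! d\eta = -\eta((\zeta_\xi)^+),
$$
forcing $\eta((\zeta_\xi)^+) = 0$ for every $\eta \in \cQ_{cot}$, hence $(\zeta_\xi)^+ \in \cZ_{cot}$. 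The decomposition $\xi = [(\zeta_\xi)^+ + h_\xi \oplus g_\xi + \sum_{k=1}^N a_k f_k] - (\zeta_\xi)^-$ then exhibits $\xi \in \hat \cH_{cot}^\infty + \cB_b(\Om)_-$. The main (already-resolved) obstacle is the pointwise-representation step inherited from Proposition \ref{p.lsonsuz}; the additional $\sum a_k f_k$ terms require no new estimates because each $f_k$ is itself a bounded Borel function and therefore pairs directly with the finitely additive measure $\eta_\xi$.
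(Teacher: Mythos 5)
Your proposal is correct and follows exactly the route the paper itself indicates for this corollary (the paper states it "can be proved as in Proposition \ref{p.lsonsuz}" and omits the details): you reduce via Theorem \ref{t.main} to the set identity $\cK_{cot}=\hat\cH^\infty_{cot}+\cB_b(\Om)_-$, then substitute the decomposition of $\fK_{cot}$ from Proposition \ref{p.cot} in place of Proposition \ref{p.kot}, and the rest of the argument (the $\cL^1$--$\cL^\infty$ representation of $\fb,\fc$, the restriction $\eta_\xi$, and the polar-set conclusion $(\zeta_\xi)^+\in\cZ_{cot}$) carries over unchanged because $\cQ_{cot}\subset\cQ_{ot}$ still has marginals $\mu,\nu$ and the finitely many $f_k$ pair directly with $\eta_\xi$. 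No gaps.
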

\vspace{2pt}

\section{Martingale Measures and super-martingales}
\label{s.mart}

Suppose that $X=Y \subset \R^d$ be convex and closed sets.
Recall that
$$
\cC_\ell= C_{\ell}(\Om):= \left\{ f \in C(\Om)\ :\
\| f\|_{\ell} <\infty \ \right\},
$$
with the weighted norm
defined in \reff{e.lnorm}.  
\vspace{2pt}

\subsection{Definitions}
\label{ss.definitions}

Define  a linear functional $T: C_b(X;\R^d) \mapsto \cC_\ell$ by
$$
T(\gamma)(x,y):= \gamma(x) \cdot (x-y), \ \ 
\forall \ (x,y)\in \Om\ \ \gamma \in C_b(X;\R^d).
$$
It is clear that $T(\gamma) \in \cC_\ell$ and
the adjoint 
$T^\prime : \cC_\ell^\prime \mapsto C_b^\prime(X;\R^d)$  satisfies,
$$
T^\prime(\eta)(\gamma)= \eta(T(\gamma)),\quad
\forall  \  \om=(x,y) \in \Om, \ \ \gamma \in C_b(X;\R^d),
\ \ \eta \in C_\ell^\p.
$$
Let $T^{\prime\prime} : 
C_b^\pp(X;\R^d) \mapsto \cC_\ell^{\prime\prime}$
to be the adjoint of $T^\prime$. 
Then,
$$
T^{\prime\prime}(\fI(\gamma)) = \fI(T(\gamma)),
\quad
\forall \ \gamma \in C_b(X;\R^d),
$$
where $\fI$ is the canonical
map of $C_b(X;\R^d)$ into $C_b^\pp(X;\R^d)$.
We then
define a subset $\fD \subset \cC_\ell^{\prime\prime}$ by,
 $$
 \fD:= \left\{ \ff \in  \cC_\ell^{\prime\prime}\ :\ 
 \exists \fg \in C_b^{\prime\prime}(X;\R^d)\ {\mbox{such that}}\ \ 
 \ff= T^{\prime\prime}(\fg) \right\}.
 $$
Equivalently, $\fD$ is the range of the adjoint operator $T^{\prime\prime}$.
Finally, set
\begin{align*}
\fM&:= \cM^\perp,
\quad
{\mbox{where}}
\quad
\cM:= \left\{ \eta \in \cC^\prime_l\ :\ \eta(T(\gamma))=0, \
\ \forall \ \gamma \in C_b(X;\R^d)\ \right\},\\
\fS&:= \left\{ \fh  \in \cC^\pp_l\ :\ \ \fh(\eta) \le 0, \
\ \forall \ \eta  \in \cM \cap (\cC_\ell^\p)_+\ \right\}.
\end{align*}
It is clear that
$\fD \subset \fM$ and $\fD + (\cC_\ell^\pp)_- \subset \fS$.
\vspace{2pt}

\begin{dfn}
\label{d.mm}
{\rm{Any element $\eta$ of $\cM$ is called a}} martingale measure,
{\rm{ any $\fm \in \fM$ a}} martingale, {\rm{and any}}
$\fh \in \fS$ a super-martingale.
\end{dfn}
\vspace{2pt}

\subsection{The case $X=Y=\R^d$}
\label{ss.case}

 The following result characterizes the sets defined above
 and also motivates the terminology used in that definition.

\begin{prp}
\label{p.drc} Let $X=Y= \R^d$.
Then, 
$\fD$  and $\fD + (\cC^{\prime\prime}_l)_-$ 
are regularly convex. In particular,
$$
\fD = \fM\quad
{\mbox{and}}
\quad
\fD +  (\cC^{\prime\prime}_\ell)_-= \fS.
$$
Hence, in $\Om= \R^d \times \R^d$,
any martingale has the form $T^{\pp}(\fg)$ and $\fh \in \cC_\ell^\pp$
is a super-martingale 
if and only if it is dominated by a martingale.
\end{prp}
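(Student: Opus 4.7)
The plan is to establish regular convexity (convexity plus weak-$^*$ closedness) of both $\fD$ and $\fD + (\cC_\ell^{\pp})_-$. Once this is done, the equalities $\fD = \fM$ and $\fD + (\cC_\ell^{\pp})_- = \fS$ follow from a separation argument in the pre-dual essentially identical to that of Lemma \ref{l.connect}: any element in the larger set but outside the smaller would produce a separating $\eta_0 \in \cC_\ell^{\p}$; one then argues that $\eta_0$ is forced to lie in $\cM \cap (\cC_\ell^{\p})_+$, yielding a contradiction with membership in $\fS$ (respectively $\fM$).

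For $\fD$, the main observation is that $T$ is an \emph{isometric} embedding of $C_b(X;\R^d)$ into $\cC_\ell$. The bound $\|T(\gamma)\|_\ell \le \|\gamma\|_\infty$ is immediate from $|\gamma(x)\cdot(x-y)| \le \|\gamma\|_\infty(|x|+|y|) \le \|\gamma\|_\infty \ell(x,y)$. For the matching lower bound, fix $x$ and set $y = x - t\,\gamma(x)/|\gamma(x)|$; as $t \to \infty$ the ratio $|T(\gamma)(x,y)|/\ell(x,y) = t|\gamma(x)|/(2+2|x|+t)$ tends to $|\gamma(x)|$, so $\|T(\gamma)\|_\ell \ge \|\gamma\|_\infty$. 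An isometric embedding has closed range, hence by the Closed Range Theorem $T''$ has weak-$^*$ closed range equal to $(\ker T')^\perp = \cM^\perp = \fM$, giving $\fD = \fM$ directly.

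For $\fD + (\cC_\ell^{\pp})_-$, convexity is clear and weak-$^*$ closedness is proved via the Krein--\v{S}mulian criterion of Lemma \ref{l.condition}: it suffices to exhibit, for every $R > 0$, a regularly convex set $\fK_R \subseteq \fD + (\cC_\ell^{\pp})_-$ containing the intersection $(\fD + (\cC_\ell^{\pp})_-) \cap B_R^{\pp}$. Following the template of Lemma \ref{l.technical}, the plan is to prove a uniform decomposition bound: there exists $C=C(R)$ such that every $\fh \in (\fD + (\cC_\ell^{\pp})_-) \cap B_R^{\pp}$ can be written as $\fh = T''(\fg) + \fn$ with $T''(\fg) \in \fD \cap B_C^{\pp}$ and $\fn \in (\cC_\ell^{\pp})_- \cap B_C^{\pp}$. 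Taking $\fK_R$ to be the sum of these two bounded sets, Theorem~7 of \cite{KS} shows $\fK_R$ is regularly convex, and Lemma \ref{l.condition} concludes.

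The technical core, and the main obstacle, is this decomposition bound (Step 4). A priori there is no uniform control on the component $\fg$: pointwise testing against Dirac-type measures only yields the linear growth $|\gamma(x)| \le C_R(1+|x|)$, not a sup bound. The strategy is to replace $\fg$ by a minimally chosen super-gradient: in the classical case $\fg = \fI(\gamma)$, pick $\tilde\gamma(x)$ as a support vector at $y=x$ of the concave envelope of $y \mapsto h(x,y)$, and exploit $h(x,x) \le 0$ together with $\|h\|_\ell \le R$ to obtain a uniform bound $|\tilde\gamma(x)| \le c\, R$; the residual $h - T(\tilde\gamma)$ is then non-positive with norm bounded by $R$. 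The analogous truncation in the bidual is performed by lattice operations in the spirit of $\fb_1 = \fb_0 \wedge \mathbf{2}$ used in Lemma \ref{l.technical}. The hypothesis $X = Y = \R^d$ is essential here: the unrestricted access to $y$ in every direction is what allows the pointwise control of $\tilde\gamma$, and its loss under marginal constraints is precisely what produces the gap $\fH_{mot} + (\cC_\ell^{\pp})_- \subsetneq \fK_{mot}$ recorded in Example \ref{ex.gap}.
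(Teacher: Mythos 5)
Your high-level plan (prove regular convexity of both sets via Krein--\v{S}mulian, get $\fD=\fM$ from the closed range theorem, and separate in the pre-dual \`a la Lemma~\ref{l.connect}) matches the paper's structure, and your observation that $T$ is an \emph{isometric} embedding is a slightly cleaner way to get closedness of the range than the paper's local Cauchy argument; once range($T$) is closed, two applications of the closed range theorem give range($T^{\pp}$) $= (\ker T^{\p})^\perp = \cM^\perp = \fM$, exactly as you say.

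However, your treatment of the crucial uniform decomposition bound for $\fD + (\cC_\ell^{\pp})_-$ is where the proposal comes apart, and it is not what the paper does. You propose to replace $\fg$ by a ``minimally chosen super-gradient'' $\tilde\gamma$, constructed as a support vector of the concave envelope of $y \mapsto h(x,y)$ at $y=x$, and then to transport this by lattice operations to the bidual. There are two problems. First, even in the classical case a measurable super-gradient selection need not be continuous, so $\tilde\gamma$ may not lie in $C_b(X;\R^d)$; this is already a serious obstruction. Second, and more fundamentally, the passage from this pointwise construction to the bidual is not a truncation of the type used in Lemma~\ref{l.technical}: the sets $\fB,\fC$ there are pushforwards of \emph{scalar} biduals where $\wedge$ makes sense, whereas here you would need a bidual analogue of ``support vector of a concave envelope,'' and no such analogue is offered. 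The paper sidesteps all of this with a much simpler device: the reflection $L(f)(x,y) := f(x, 2x-y)$. Because $X=Y=\R^d$, $L$ is a bounded linear map on $\cC_\ell$ with $L(T(\gamma)) = -T(\gamma)$ and with an adjoint $L^\p$ that maps $(\cC_\ell^\p)_+$ into itself and satisfies $\|L^\p(\eta)\| \le 3\|\eta\|$. Then one does \emph{not} modify $\fg$: testing $T^{\pp}(\fg)\ge\fa$ against the positive measure $L^\p(\eta)$ and using $T^{\pp}(\fg)(\eta) = -T^{\pp}(\fg)(L^\p(\eta))$ immediately converts the one-sided bound into a two-sided bound $\|T^{\pp}(\fg)\| \le 3\|\fa\|$. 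That is the missing idea; without it (or an equivalent), the decomposition bound in your Step~4 does not go through.
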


\begin{proof} We again use the closed range theorem to prove this result.
\vspace{2pt}

{\em{Step 1.}} ({\em{Range of}} $T${\em{is closed}} ).

Let $\gamma_n$ be a sequence in $C_b(X;\R^d)$
so that $\xi_n=T(\gamma_n)$ is strongly convergent
to $\xi \in \cC_\ell$.
For each $x \in X$, and positive integers $n, m$, set 
$$
y_{n, m}(x):= x-\frac{\gamma_n(x) -\gamma_m(x)}{|\gamma_n(x)-\gamma_m(x)|}.
$$
Then,
\begin{align*}
 \left| \gamma_n(x)- \gamma_m(x)\right| &=
 \left| \xi_n(x,y_{n, m}(x)) - \xi_m(x,y_{n, m}(x)) \right|  \\
& \le \| \xi_n - \xi_m \|_{\cC_\ell} \ [2+ |x| + |y_{n,m}(x)|] 
 \le 3 [1+ |x|] \ \| \xi_n - \xi_m \|_{\cC_\ell} \ .
\end{align*}
Therefore, $\{\gamma_n\}$ is locally Cauchy in $C_b(X,\R^d)$.
Consequently, as $n$ tends to infinity,
$\gamma_n$ converges locally
uniformly to a continuous function $\gamma \in C(X;\R^d)$. 
Moreover, it is clear that $\xi = T(\gamma)$.  
We claim that $\gamma $
is bounded.  Indeed, set
$$
y(x,\lambda):= x - \lambda \ \frac{\gamma(x)}{|\gamma(x)|},
\quad \ \lambda >0, \ x \in X, \ \gamma(x)\neq 0,
$$
and set $y(x,\lambda)=0$ when $\gamma(x)=0$.
Then, we directly estimate that,
$$
| \gamma(x)| = \frac{1}{\lambda}\ \xi(x,y(x,\lambda))
 \le   \frac{1}{\lambda}\ \|\xi\|_{\cC_\ell}\ [2+|x|+|y(x,\lambda)|]
 \le   \frac{1}{\lambda}\ \|\xi\|_{\cC_\ell}\ [2+2 |x|+\lambda].
$$
We let $\lambda $ to infinity to arrive at the following estimate,
\begin{equation}
\label{e.Test}
\| \gamma\|_{C_b(X;\R^d)} \le 
\left\| \xi \right\|_{\cC_\ell}= 
\left\| T(\gamma) \right\|_{\cC_\ell}, 
\quad \forall \ \gamma \in C_b(X;\R^d).
\end{equation}
\vspace{2pt}

{\em{Step 2.}} ($\fD= \fM$).

The previous step shows that the range of $T$ is closed. Then,
by the closed range Theorem \cite{Rudin}[Theorem 4.14], the range of $T^\prime$
is weak$^*$  and also norm closed. We now apply the same theorem
to $T^\prime$ to conclude that the range of $T^{\prime\prime}$ is weak$^*$ closed.
Since $\fD$ is defined as the range of $T^{\prime\prime}$ and since it is linear,
we conclude that it is regularly convex.  Hence, $\fM=\fD$.
\vspace{2pt}

{\em{Step 3.}} ({\em{A map}}).

Define a linear map $L : \cC_\ell \mapsto \cC_\ell$ by
$$
L(f)(x,y):= f(x,2x-y), \quad
\forall \ (x,y) \in \Om, \ \ f \in \cC_\ell.
$$
Then, one can directly verify that
for any $(x,y) \in \Om$ and $\gamma \in C_b(X;\R^d)$
 the following identity holds:
$$
L(T(\gamma))(x,y) =
T(\gamma)(x,2x-y) = \gamma(x)\cdot(x-(2x-y)) = - T(\gamma)(x,y).
$$
We use this with
$\eta \in \cC_\ell^\prime$ and $\gamma \in C_b(X;\R^d)$
to arrive at
\begin{align*}
T^{\prime\prime}(\fI(\gamma))(\eta) &=
\eta(T(\gamma))= - \eta(L(T(\gamma))) =- L^\prime(\eta)(T(\gamma))\\
 & = - T^{\prime\prime}(\fI(\gamma))(L^\prime(\eta)).
 \end{align*}
By the weak$^*$ density of $\fI(C_b(X;\R^d))$ in $C_b^{\prime\prime}(X;\R^d)$,
we conclude that the above holds for any element of $C_b^{\prime\prime}(X;\R^d)$, i.e.,
$$
T^{\prime\prime}(\fg)(\eta) = 
- T^{\prime\prime}(\fg)(L^\prime(\eta)), \quad
\forall \ \fg \in C_b^{\prime\prime}(X;\R^d), 
\ \eta \in \cC_\ell^\prime.
$$
Moreover, for any $f \in \cC_\ell$ and $(x,y) \in \Om$,
$$
\left| L(f)(x,y)\right| =\left| f(x,2x-y)\right| \le
\| f\|_{\cC_\ell} \ell(x,2x-y) \le 
3 \| f\|_{\cC_\ell} \ell(x,y).
$$
Hence,  for any $\eta \ge 0$, 
$$
\| L^\prime(\eta)\|_{\cC_\ell^\prime} \le 3 \|\eta\|_{\cC_\ell^\prime} .
$$
Also, $L^\prime(\eta) \ge 0$ whenever $\eta \ge 0$. 
\vspace{4pt}

{\em{Step 4.}} ({\em{An estimate}}).

Let  $\fa$ be an arbitrary element of $\fD+(\cC_\ell^{\prime\prime})_-$.
Then, there exists $\fg \in C_b^{\prime\prime}(X;\R^d)$ such that
$T^{\prime\prime}(\fg)(\eta) \ge \fa(\eta)$.
For any 
$\eta \in (\cC_\ell^\prime)_+$, the previous step implies the following,
$$
T^{\prime\prime}(\fg)(\eta)  =- T^{\prime\prime}(\fg)(L^\prime(\eta)) 
 \le - \fa(L^\prime(\eta)) \le 3 \|\fa\|_{\cC^{\prime\prime}} \|\eta\|_{\cC_\ell^\prime}.
 $$
We also have,
$$
T^{\prime\prime}(\fg)(\eta) \ge 
\fa(\eta) \ge - \|\fa\|_{\cC^{\prime\prime}} \|\eta\|_{\cC_\ell^\prime}.
$$
Hence,
$$
\left\| T^{\prime\prime}(\fg) \right\|_{\cC_\ell^{\prime\prime}} \le 3 \| \fa\|_{\cC_\ell^{\prime\prime}},
\quad
{\mbox{whenever}}
\quad
T^{\prime\prime}(\fg) \ge \fa.
$$

We summarize the above estimate into the following
$$
\left(\fD+(\cC_\ell^{\prime\prime})_-\right) 
\cap B^{\prime\prime}_{\ell,R}
\ \subset \ \fD \cap B^{\prime\prime}_{\ell,3R} 
+ (\cC_\ell^{\prime\prime})_- \cap B^{\prime\prime}_{\ell,4R}, \quad
\forall \ R>0,
$$
where
$$
B^{\pp}_{\ell,R} = \left\{ \ \fa \in C^{\pp}_\ell \ 
:\ \|\fa \|_{C^{\pp}_\ell} \le 1\ \right\}.
$$
\vspace{2pt}

{\em{Step 5.}} ($\fD+(\cC_\ell^{\prime\prime})_-$ {\em{is regularly convex}} ).

Since $\fD$ is proved to be regularly convex,
by the previous step and Lemma \ref{l.condition}
of the Appendix, we conclude that $\fD+(\cC_\ell^{\prime\prime})_-$
is also regularly convex.  Therefore, $\fD+(\cC_\ell^{\prime\prime})_-$
is equal to its regularly convex envelope $\fS$.
\end{proof}
We now give without proof the following corollary 
which is a direct consequence of the regular convexity of 
$\fD+(\cC_\ell^{\prime\prime})_-$.
\begin{cor}
For $\fa \in \cC_{\ell}^{\prime\prime}$,
\begin{align*}
\dP(\fa; \cM \cap  \partial B^\p_+)& = \dD(\fa;\fD+(\cC_\ell^{\prime\prime})_-) \\
&= \min \left\{ \ c \in  \R\ : \
\exists \fg \in C_b^{\prime\prime}(X;\R^d),\ 
{\mbox{such that}}\ \
c \ {\bf{1}}+ T^{\prime\prime}(\fg)  \ge \fa\ \right\}. 
\end{align*}
\end{cor}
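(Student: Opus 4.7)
The plan is to apply Theorem \ref{t.main} directly, with $\fL = \cC_\ell^{\prime\prime}$ and dual cone $\fK = \fD + (\cC_\ell^{\prime\prime})_-$. Essentially all the hard work has been done already in Proposition \ref{p.drc}, so this should reduce to a clean bookkeeping argument. The only substantive verification will be that $\fK_\cQ$ with $\cQ := \cM \cap \partial B^\p_+$ coincides with $\fS$ so that Proposition \ref{p.drc} is directly applicable.

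First, I would verify Assumption \ref{a.main}. Set $\cH_\cQ := T(C_b(X;\R^d))$. Step 1 of Proposition \ref{p.drc} showed that $T$ has closed range (via the estimate \reff{e.Test}), so $\cH_\cQ$ is a closed subspace of $\cC_\ell$. By construction, $\cM = \cH_\cQ^\perp$, and so $\cQ = \cH_\cQ^\perp \cap \partial B_+^\p$. The set $\cQ$ is nonempty (any measure of the form $\mu \otimes \mu$ with $\mu$ supported on $X$ and appropriately normalized lies in $\cQ$).

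Next, I would identify $\fK_\cQ$ with $\fS$. For $\eta \in (\cC_\ell^\p)_+$, property \reff{e.norm2} gives $\|\eta\|_{\cC_\ell^\p} = \eta(\e)$; thus any nonzero $\eta \in \cM \cap (\cC_\ell^\p)_+$ can be rescaled by $1/\eta(\e)$ to land in $\cQ$, and so $\fz(\eta) \le 0$ for every $\eta \in \cQ$ is equivalent to $\fz(\eta) \le 0$ for every $\eta \in \cM \cap (\cC_\ell^\p)_+$. Hence $\fK_\cQ = \fS$. By Proposition \ref{p.drc}, $\fS = \fD + (\cC_\ell^{\prime\prime})_- = \fK$, and this set is weak$^*$ closed (hence strongly closed). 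Moreover, $\fK$ is evidently a convex cone and satisfies the stability condition \reff{e.cond}, since $\fD$ is a linear subspace and $(\cC_\ell^{\prime\prime})_-$ absorbs negatives.

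Now all hypotheses of Theorem \ref{t.main} are met, so
\[
\dP(\fa; \cQ) = \dD(\fa; \fK), \qquad \forall\ \fa \in \cC_\ell^{\prime\prime},
\]
with dual attainment in $\overline{\fK} = \fK$. It remains only to rewrite $\dD(\fa;\fK)$ in the announced form: an element $\fz \in \fK$ is of the form $T^{\prime\prime}(\fg) + \fm$ for some $\fg \in C_b^{\prime\prime}(X;\R^d)$ and some $\fm \in (\cC_\ell^{\prime\prime})_-$, and the equation $c\,\mathbf{1} + \fz = \fa$ together with $\fm \le 0$ is equivalent to the single inequality $c\,\mathbf{1} + T^{\prime\prime}(\fg) \ge \fa$. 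The attainment statement of Theorem \ref{t.main} then upgrades the infimum to a minimum, completing the proof.

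The only step with even a minor subtlety is the identification $\fK_\cQ = \fS$ (handling the normalization $\eta(\e) = 1$); everything else is essentially an invocation of Proposition \ref{p.drc} and Theorem \ref{t.main}.
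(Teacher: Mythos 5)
Your proposal is correct and matches the intended route: the paper states the corollary ``without proof'' as a direct consequence of the regular convexity of $\fD+(\cC_\ell^{\prime\prime})_-$, and your argument is exactly that consequence, spelled out via Assumption~\ref{a.main}, the identification $\fK_\cQ=\fS$, Proposition~\ref{p.drc}, and Theorem~\ref{t.main}. One small slip in passing: $\mu\otimes\mu$ (the product measure) is generally \emph{not} in $\cM$ --- for $\eta=\mu\times\mu$ one gets $\eta(T(\gamma))=\int\gamma(x)\,(x-\bar\mu)\,\mu(dx)\neq 0$ in general; you should instead exhibit a normalized diagonal measure such as $\delta_{(x_0,x_0)}/\ell(x_0,x_0)$ to show $\cQ\neq\emptyset$, which is all that is needed.
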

\vspace{2pt}
\section{Martingale optimal  transport}
\label{s.mot}
 
In this example, we take 
$X= Y = \R^d$ and 
set 
$\cX= \cC_\ell$ with
the unit element,
 $$
 {\bf{e}}(x,y)= \ell(x,y):= \ell_X(x)+\ell_Y(y)=
 (1+ |x|)+(1+|y|).
 $$
 We also use the notation,
 $$
 \cC_{X}:=\cC_{\ell_X}(X), \quad
 \cC_{Y}:=\cC_{\ell_Y}(Y).
 $$
 
 \subsection{Set-up}
 \label{ss.motsu}
 
 As in Section \ref{s.ot}, we fix $\mu, \nu$.  We assume that
 they are in convex order.
 For any $\alpha \in \cC_X, \beta \in \cC_Y$, set
$$
m^*:= m^*_X+m_Y^*, \quad
m^*_X:= \mu(\ell_X), \quad
m^*_Y:=\nu(\ell_Y).
$$ 
 Recall that the set of
 martingale measures $\cM$ is defined 
 in Section \ref{s.mart}
 as the annihilators of the range of
 the map $T$ again introduced in that section.
 Set,
 $$
 \cH_{mot}:=
 \left\{ h \oplus g  + T(\gamma) \ :
 h \in  \cC_X,\ g \in \cC_Y, \ \gamma \in C_b(X;\R^d)\ 
 \right\},
 $$
 $$
 \hat \cQ_{mot}:= \cH^\perp_{mot} \cap \partial B^\p_+.
 $$
 Then, any $\eta \in \hat \cQ_{mot}$ satisfies $\eta \in ca_r^+(\Om)$
 and $\eta_x=\eta(\Om) \mu$, $\eta_y=\eta(\Om) \nu$.  In particular,
 $$
 \eta({\bf{e}}) = \eta(\ell)= \int_\Om \ell(x,y) \eta(dx,dy) = 
 m^*  \eta(\Om)=1.
 $$
Set, 
$$
\cQ_{mot}:= \left\{\eta \in \cC_\ell^\p\ :\
\eta/m^*\in \hat \cQ_{mot}\ \ \right\}.
$$
Then, $\cQ_{mot} = \cQ_{ot}\cap \cM$.
We also note that, 
by Strassen's Theorem \cite{S}, $\cQ_{mot}$ is non-empty
if and only if $\mu$ and $\nu$ 
are in convex order which we always assume.

For any $\eta \in \cQ_{mot}$, $\eta(\ell)= m^*$.
Hence,
$$
\eta \in \hat \cQ_{mot} \quad
\Leftrightarrow 
\quad
m^*\eta \in \cQ_{mot}.
$$
In particular, 
$$
\dP_{mot}(\cdot) = m^* \ \dP(\cdot;\hat \cQ_{mot}), \quad
{\mbox{where}} \quad
\dP_{mot}(\fa):= \sup_{\eta \in \cQ_{mot}} \ \fa(\eta).
$$
 
Set
 $$
 \fH_{mot} =\left(\cH_{mot}^\perp\right)^\perp, \quad
 \fK_{mot}:= \fK_{\cQ_{mot}} =  \fK_{\hat \cQ_{mot}} .
$$
By Theorem \ref{t.main},
$$
\dP(\fa;\hat \cQ_{mot})
=\dP(\fa; \fK_{mot}),
\qquad \forall \ \fa \in \cC_\ell^\pp.
$$
One may directly verify that
$\fI(\ell-m^*) \in \fH_{mot}$.
Therefore, for any $\fa \in \cC_\ell^\pp$,
\begin{align*}
\dP_{mot}(\fa) &= m^* \ \dP(\fa;\hat \cQ_{mot}) 
= m^*\ \dD(\fa; \fK_{mot}),\\
&=m^*\  \min \left\{ c \in \R\ :
 \exists \ \fz \in \fK_{mot} \quad
 {\mbox{such that}}\quad
 c \fI(\ell) + \fz = \fa\ \right\}\\
 &= \min \left\{ c\ m^* \in \R\ :
 \exists \ \fz \in \fK_{mot} \quad
 {\mbox{such that}}\right. \\
 & \hspace{118pt} \left.
  c\ m^*{\bf{1}} +[c\ \fI(\ell-m^*) + \fz] = \fa \ \right\}.
\end{align*}
For any $\eta \in \cQ_{mot}$
and for any $\fz \in \cX^\pp$, $c \in \R$, we have,
$$
\eta(c\ \fI(\ell-m^*) + \fz)
= c [\eta(\ell)-m^*] + \eta(\fz)= \eta(\fz).
$$
Hence, $\tilde \fz := c\ \fI(\ell-m^*) + \fz \in \fK_{mot}$
if and only if $\fz \in \fK_{mot}$.  This implies that
\begin{align}
\label{e.motdual}
\dP_{mot}(\fa)
 &= \min \left\{ \tilde c \in \R\ :
 \exists \ \tilde \fz \in \fK_{mot} \quad
 {\mbox{such that}}\quad
 \tilde c {\bf{1}}+\tilde \fz = \fa\  \right\}\\
 \nonumber
 &=: \dD_{mot}(\fa).
\end{align}
We summarize the above result in the following.
\begin{thm}
\label{t.motduality}
Assume that $\mu$ and $\nu$ are 
in convex order.  Then, for every $\fa \in \cC_\ell^\pp$,
$$
\dP_{mot}(\fa) = \sup_{\eta \in \cQ_{mot}} \fa(\eta)
= \dD_{mot}(\fa).
$$
\end{thm}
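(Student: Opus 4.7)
The plan is to show that the theorem follows directly from applying the main duality result Theorem~\ref{t.main} to the set $\hat\cQ_{mot}$ and then carrying out the change of unit element already sketched in the display equations preceding the statement. The first step is to verify that $\hat\cQ_{mot}$ satisfies Assumption~\ref{a.main} in the Banach lattice $\cC_\ell$. Non-emptiness is precisely Strassen's theorem, which guarantees a martingale coupling $\pi$ of $\mu$ and $\nu$; then $\pi/m^*$ belongs to $\hat\cQ_{mot}$ since its total mass normalized by $\ell$ equals one. Convexity is immediate from the linear structure of $\cH_{mot}$, and closedness is automatic since $\hat\cQ_{mot}$ is the intersection of the weak$^*$ closed set $\cH_{mot}^\perp$ with the closed convex set $\partial B^\p_+$. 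The only mild point is that $\cH_{mot}$ must be a closed subspace of $\cC_\ell$, which one checks by noting $h \oplus g \in \cC_\ell$ whenever $h \in \cC_X$, $g \in \cC_Y$, and $T(\gamma) \in \cC_\ell$ with $\|T(\gamma)\|_\ell \le \|\gamma\|_\infty$ (as observed in the proof of Proposition~\ref{p.drc}).

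With Assumption~\ref{a.main} in place, Theorem~\ref{t.main} applied to $\fL = \cC_\ell^\pp$ and $\fK = \fK_{mot}$ yields
\[
\dP(\fa;\hat\cQ_{mot}) \;=\; \dD(\fa;\fK_{mot}),\qquad \forall\ \fa \in \cC_\ell^\pp,
\]
where the dual problem is
\[
\dD(\fa;\fK_{mot}) \;=\; \min\{\,c \in \R \,:\, \exists\ \fz \in \fK_{mot}\ \text{ s.t. }\ c\,\fI(\ell)+\fz = \fa\,\},
\]
the minimum being attained by Theorem~\ref{t.main}.

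Next, I would carry out the change from the unit element $\fI(\ell)$ to the constant function $\1$, which is the content of the display chain in the excerpt. Note that $\ell - m^* \in \cH_{mot}$ since $\ell - m^* = (\ell_X - m_X^*)\oplus(\ell_Y - m_Y^*)$ with $\mu(\ell_X - m_X^*) = \nu(\ell_Y - m_Y^*) = 0$, hence $\fI(\ell - m^*) \in \fH_{mot}$ and in particular $\fI(\ell - m^*)$ annihilates every $\eta \in \hat\cQ_{mot}$. Therefore, for any $c \in \R$ and any $\fz \in \cC_\ell^\pp$, the element $\tilde\fz := c\,\fI(\ell-m^*) + \fz$ satisfies $\tilde\fz(\eta) = \fz(\eta)$ for all $\eta \in \cQ_{mot}$, and consequently $\tilde\fz \in \fK_{mot} \iff \fz \in \fK_{mot}$. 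Using the identity $c\,\fI(\ell) = c\,m^*\,\1 + c\,\fI(\ell - m^*)$ and the scaling $\dP_{mot}(\fa) = m^*\,\dP(\fa;\hat\cQ_{mot})$, one substitutes $\tilde c = c\,m^*$ into the dual problem and obtains exactly
\[
\dP_{mot}(\fa) \;=\; \min\{\,\tilde c \in \R\,:\,\exists\ \tilde\fz \in \fK_{mot}\ \text{ s.t. }\ \tilde c\,\1 + \tilde\fz = \fa\,\} \;=\; \dD_{mot}(\fa).
\]

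The only step that requires genuine thought is the verification of Assumption~\ref{a.main}, specifically the closedness of $\cH_{mot}$ in $\cC_\ell$. The $h \oplus g$ part is elementary, but the difficulty lies in showing the closedness is preserved when we add $T(\gamma)$ terms with $\gamma \in C_b(X;\R^d)$. The estimate \reff{e.Test} established in Step~1 of the proof of Proposition~\ref{p.drc} supplies exactly the needed control, giving $\|\gamma\|_\infty \le \|T(\gamma)\|_\ell$, so that a Cauchy sequence in $\cH_{mot}$ yields Cauchy sequences for each of the three components separately. Everything else is a routine bookkeeping exercise built on Theorem~\ref{t.main}.
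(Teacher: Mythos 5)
Your proof is correct and follows the same route the paper itself sketches in the display chain immediately preceding the theorem: verify Assumption~\ref{a.main} for $\hat\cQ_{mot}$ in $\cC_\ell$, invoke Theorem~\ref{t.main} to get $\dP(\fa;\hat\cQ_{mot})=\dD(\fa;\fK_{mot})$ with attainment, and then change the unit element from $\fI(\ell)$ to $\1$ via the scaling $\dP_{mot}=m^*\,\dP(\cdot;\hat\cQ_{mot})$ together with the observation that $\fI(\ell-m^*)\in\fH_{mot}$, so adding $c\,\fI(\ell-m^*)$ preserves membership in $\fK_{mot}$. One clarification on the point you single out as the only one requiring genuine thought: the closedness of $\cH_{mot}$ in $\cC_\ell$ is in fact immaterial. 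Assumption~\ref{a.main} asks only for \emph{some} closed subspace $\cH_\cQ$ with $\hat\cQ_{mot}=\cH_\cQ^\perp\cap\partial B^\p_+$, and taking $\cH_\cQ:=\overline{\cH_{mot}}$ works because passing to the norm-closure does not change the annihilator, while $\hat\cQ_{mot}$ itself is automatically weak$^*$ (hence norm) closed as an intersection of such sets. Moreover, the estimate $\|\gamma\|_{C_b(X;\R^d)}\le\|T(\gamma)\|_{\cC_\ell}$ from \reff{e.Test} does not by itself let you split a Cauchy sequence $h_n\oplus g_n+T(\gamma_n)$ into three separately Cauchy components — you would first need to disentangle the $T(\gamma_n)$ summand from $h_n\oplus g_n$, which requires a further argument — so that sketch would need to be filled in if closedness were actually required; fortunately, it is not, and the rest of your argument is unaffected.
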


\vspace{2pt}

\subsection{Map $T$}
\label{ss.mapT}

In this subsection, we write the map $T$ and its adjoints
in the coordinate form to better explain the constructions
that will be given in the next subsection.
For $i =1, \ldots, d$,  $\gamma^{(i)} \in C_b(X)$, 
$\om=(x,y)$, $x=(x^{(1)},\ldots,x^{(d)})$ and 
$y=(y^{(1)},\ldots,y^{(d)})$, set
$$
T^{(i)}(\gamma^{(i)})(\om):= \gamma^{(i)}(x) (x^{(i)}-y^{(i)}).
$$
It is clear that for $\gamma =(\gamma^{(1)},\ldots,\gamma^{(d)})
\in C_b(X;\R^d)$,
$$
T(\gamma)(\om)= \sum_{i=1}^d T^{(i)}(\gamma^{(i)})(\om).
$$
Then, $T^\p(\eta)= \left(T^{\p (1)}(\eta),\dots, T^{\p (d)}(\eta)\right)$ and
$$
T^{\p (i)}(\eta)(\gamma)=T^{\p (i)}(\eta)(\gamma^{(i)})
=\eta\left(T^{(i)}(\gamma^{(i)})\right)
=\int_\Om \gamma^{(i)}(x) (x^{(i)}-y^{(i)})\ \eta(dx,dy).
$$
For any $\fg = (\fg^{(1)}, \ldots, \fg^{(d)}) \in C_b^\pp(\Om;\R^d)$,
we have $\fg^{(i)} \in C_b^\pp(X)$ for each $i=1,\ldots,d$ and
for $\rho =(\rho^{(1)},\dots, \rho^{(d)})$ with $\rho^{(i)} \in C_b^\p(X)$,
$$
\fg(\rho)= \sum_{i=1}^d \fg^{(i)}(\rho^{(i)}).
$$
Then, for any $\eta \in \cC_\ell^\pp$,
$$
T^\pp(\fg)(\eta)= \fg(T^\p(\eta)) = \sum_{i=1}^d \fg^{(i)}(T^{\p (i)}(\eta)).
$$

\subsection{Dual Elements}
\label{ss.dualmot}

The following result characterizes the dual
elements $\fH_{mot}$.
We introduce the following sets,
  \begin{align*}
\fB_\ell&=\fB_{\ell,\mu}:= \left\{ \fa \in \cC_\ell^\pp\ :\ 
\exists \fb \in \cC_X^\pp, \ \ 
 {\mbox{such that}}\ \
 \fa=\fb \oplus {\bf{0}}\ \ {\mbox{and}}\ \ 
 \fb(\mu)=0\ \right\}\\
\fC_\ell&=\fC_{\ell,\nu}:= \left\{ \fa \in 
\cC_\ell^\pp\ :\ \exists \fc \in \cC_Y^{\prime\prime}, \ \ 
 {\mbox{such that}}\ \
 \fa={\bf{0}} \oplus \fc\ \ {\mbox{and}}\ \ 
 \fc(\nu)=0\ \right\}.
 \end{align*}

 \begin{thm}
 \label{t.mot}  Let $X=Y= \R^d$.
 Suppose that $\mu$ and $\nu$ are in convex order.
 Then,
\begin{equation}
\label{e.sum}
 \fH_{mot}=\left(\cH_{mot}^\perp\right)^\perp =  \fB_\ell + \fC_\ell + \fM.
 \end{equation}
 In particular, the dual set
 $\fK_{mot}$ is the 
 weak$^*$ closure of 
 $\fB_\ell + \fC_\ell + \fM +\cX^\pp_-$.
  \end{thm}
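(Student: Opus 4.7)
The identity for $\fK_{mot}$ is immediate from Lemma \ref{l.connect} once the first identity $\fH_{mot} = \fB_\ell + \fC_\ell + \fM$ is established, so I focus on the latter. The inclusion $\fB_\ell + \fC_\ell + \fM \subseteq \fH_{mot}$ is direct: any $\eta \in \cH_{mot}^\perp$ satisfies $\eta_x \in \R\mu$, $\eta_y \in \R\nu$ and $\eta \in \cM$, so each of the three summands vanishes on $\eta$. For the reverse inclusion, I would mimic the separation strategy of Proposition \ref{p.kot}: first prove that $\fK := \fB_\ell + \fC_\ell + \fM$ is regularly convex, and then argue by contraposition. Given $\fa_0 \in \fH_{mot}\setminus\fK$, regular convexity produces $\eta_0 \in \cC_\ell^\p$ with $\sup_{\fz\in\fK}\fz(\eta_0) = 0 < \fa_0(\eta_0)$; since $\fK$ is a linear subspace, $\eta_0$ lies in the pre-annihilator of each summand. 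Testing against $\fB_\ell$ forces $\eta_{0,x} \in \R\mu$, against $\fC_\ell$ forces $\eta_{0,y} \in \R\nu$, and against $\fM$ (whose pre-annihilator is the weak$^*$-closed set $\cM$) forces $\eta_0 \in \cM$; hence $\eta_0 \in \cH_{mot}^\perp$ and $\fa_0(\eta_0) = 0$, a contradiction.

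The heart of the proof is therefore the regular convexity of $\fK$. Following Step 1 of Proposition \ref{p.kot}, I would invoke the Krein--\v{S}mulian criterion of Lemma \ref{l.condition} to reduce to showing that $\fK \cap B^\pp_R$ is regularly convex for each $R > 0$. The pieces $\fB_\ell$ and $\fC_\ell$ are regularly convex by the argument of Lemma \ref{l.erc} transplanted to the linearly growing setting---the projections $\Pi_x, \Pi_y$ remain surjective because $\mu(\ell_X), \nu(\ell_Y) < \infty$ under convex order---while $\fM = \fD$ is regularly convex by Proposition \ref{p.drc}. The linchpin is then a uniform decomposition estimate, the inequality \reff{e.est} of the theorem, asserting
\[
\fK \cap B^\pp_R \;\subseteq\; (\fB_\ell \cap B^\pp_{c_0 R}) + (\fC_\ell \cap B^\pp_{c_0 R}) + (\fM \cap B^\pp_{c_0 R})
\]
for some universal constant $c_0$. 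Granted this, Theorem 7 of \cite{KS} yields the regular convexity of the right-hand side, and the chain closes.

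Proving \reff{e.est} is the main obstacle. My plan is to fix a martingale probability transport $\tilde\eta$ between $\mu$ and $\nu$ (available by Strassen's theorem under the convex order hypothesis) and exploit its bidirectional disintegrations $\tilde\eta(dy\,|\,x)$ and $\tilde\eta(dx\,|\,y)$. Given any representation $\fa = \fb_0 \oplus \0 + \0 \oplus \fc_0 + T^\pp(\fg_0)$ with $\|\fa\|_{\cC_\ell^\pp} \le R$, one constructs replacements $\fb, \fc$ by averaging $\fa$ against kernels built from $\tilde\eta$ and then truncating in the spirit of Lemma \ref{l.technical}; the martingale property of $\tilde\eta$ guarantees that the residual $\fa - \fb \oplus \0 - \0 \oplus \fc$ remains in $\fD = \fM$, while the norm of the resulting $\fg$ is controlled through the estimate \reff{e.Test} of Proposition \ref{p.drc}, namely $\|\gamma\|_{C_b(X;\R^d)} \le \|T(\gamma)\|_{\cC_\ell}$, passed to the bidual via weak$^*$-density. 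The new coupling between the $x$- and $y$-components through $\tilde\eta$---absent in the classical optimal transport case---is precisely what forces the martingale summand $\fM$ into the decomposition and demands bookkeeping more delicate than that of Lemma \ref{l.technical}.
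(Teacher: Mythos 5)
Your overall architecture mirrors the paper's: establish the easy inclusion, prove that $\fK := \fB_\ell + \fC_\ell + \fM$ is regularly convex via the Krein--\v{S}mulian criterion of Lemma~\ref{l.condition}, and finish with the standard separation argument. The easy inclusion and the separation step are correctly reasoned (the paper's Step~2 is the same argument, phrased via $(\fB_\ell + \fC_\ell + \fM)_\perp \subset \cH_{mot}^\perp$). One small remark: the paper's estimate~\reff{e.est} does \emph{not} bound the $\fM$-summand; it only bounds the $\fB_\ell$ and $\fC_\ell$ pieces, which already suffices because a sum of a bounded regularly convex set (hence weak$^*$-compact) with the weak$^*$-closed set $\fM$ is still weak$^*$-closed. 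Your version with the extra bound on $\fM$ follows a posteriori by the triangle inequality, so this is not a gap, just extra work.

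The genuine gap is in your plan for proving the estimate. You propose fixing a Strassen coupling $\tilde\eta$ and defining $\fb$, $\fc$ by averaging $\fa$ against kernels built from the disintegrations $\tilde\eta(\,\cdot\,|x)$, $\tilde\eta(\,\cdot\,|y)$. This runs into a concrete obstruction: the bound $|\fb(\alpha)| \le \|\fa\|_{\cC_\ell^\pp}\,\|\alpha\otimes\tilde\eta(\,\cdot\,|x)\|_{\cC_\ell^\p}$ would yield a uniform norm estimate only if $\int \ell_Y(y)\,\tilde\eta(dy|x) \lesssim \ell_X(x)$ pointwise, i.e.\ a uniform first-moment bound on the conditional kernel. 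This holds on $\mu$-average (since $\nu(\ell_Y)<\infty$) but not pointwise in general, so the averaged $\fb$ need not have controlled operator norm; the truncation trick of Lemma~\ref{l.technical} caps values of $\fb$, not moments of a kernel, and does not repair this. Moreover, the \emph{backward} disintegration $\tilde\eta(dx|y)\,\beta(dy)$ is typically not a martingale measure, so it is not clear the residual stays in $\fM$. The paper avoids all of this: it never fixes a coupling and never invokes Strassen in the proof of this theorem. Instead it (i) recenters $\fg_0\to\fg_1$ so that $\fg_1$ vanishes on constant multiples of $\delta_0$, absorbing the correction into the $\fB_\ell\oplus\fC_\ell$ part and using convex order only to match the means of $\mu$ and $\nu$; (ii) tests against the singular measures $\eta_\beta = \delta_0\times\beta$, on which $T^\pp(\fg_1)$ vanishes by the recentering, giving $\fc_2(\beta)=\fa(\eta_\beta)$ and hence $\|\fc_2\|\le\|\fa\|$; (iii) tests against lazy martingales in $\cQ(\alpha)$, whose $\cC_\ell^\p$-norm is exactly $2\|\alpha\|_{\cC_X^\p}$, giving $\|\fb_2\|\le 2$; (iv) re-normalizes so $\fb_3(\mu)=\fc_3(\nu)=0$. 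Your appeal to~\reff{e.Test} ``passed to the bidual'' to control $\fg$ is also off-target: that estimate is what makes the range of $T$ closed in Proposition~\ref{p.drc}, and plays no role in the estimate here. The decisive idea you are missing is the choice of test measures — Diracs and lazy martingales — whose $\ell$-norms are explicitly and uniformly controlled, which a generic disintegration kernel does not provide.
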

 \begin{proof}
It is clear from their definitions that $\fB_\ell, 
\fC_\ell$ and $\fM$ are all regularly convex. 

Step 1: We first show that $\fH:=\fB_\ell + \fC_\ell + \fM $ is regularly convex.
In view of Lemma \ref{l.condition}, regular convexity of this sum
would follow from the following estimate,
\begin{equation}
\label{e.est}
\fH \cap B^\pp_1
\subset \left(\fB_\ell  \cap B^\pp_{c^*}\right)
+  \left(\fC_\ell  \cap B^\pp_{c^*}\right) + \fM,
\end{equation}
for some constant $c^*$.

We continue by proving this estimate.
Fix $\fa_0 \in \fH \cap B_1^\pp$.  Then,
there are $\fb_0 \in \fB_\ell$, $\fc_0 \in \fC_\ell$ and
$\fg_0 \in \cC_b^\pp(X; \R^d)$ so that
$$
\fa_0= \fb_0\oplus \fc_0 +T^\pp(\fg_0).
$$
Define $\fg_1 \in \cC_b^\pp(X; \R^d)$ by,
$$
\fg_1(\rho):= \fg_0(\rho)- \fg_0(  \rho(X)\delta_{0}), \quad
\rho \in \cC_b^\prime(X; \R^d).
$$
In the above, note that for $\rho \in C_b^\p(X;\R^d)$, $\rho(A) \in \R^d$
for any Borel subset $A$ of $X$.
In the coordinate form, 
$$
\fg_0\left(\rho(X) \delta_0\right) 
=\sum_{i=1}^d \ \rho^{(i)}(X)\ \fg_0^{(i)}\left( \delta_0\right).
$$
Then, 
for any $\eta \in \cC_\ell^\p$,
$$
\fg_0\left(T^\p(\eta)(X) \delta_0\right) 
=\sum_{i=1}^d T^{\p(i)}(\eta)(X)\ \fg_0^{(i)}\left( \delta_0\right)
=\fg_0(\delta_{0}) \cdot  \int_\Om (x-y) \eta(dx, dy).
$$

Observe that
\begin{align*}
T^\pp(\fg_1)(\eta)&= \fg_1\left(T^\p(\eta)\right)
= \fg_0\left(T^\p(\eta)\right) - \fg_0\left( T^\p(\eta)(X)\delta_{0}\right)\\
&= T^\pp(\fg_0)(\eta)- 
\fg_0(\delta_{0}) \cdot  \int_\Om (x-y) \eta(dx, dy)\\
& =T^\pp(\fg_0)(\eta) + \tilde \fb_1(\eta_x) + \tilde \fc_1(\eta_y),
\end{align*}
where
$$
\tilde \fb_1(\alpha):= - \fg_0(\delta_{0}) \cdot \int_X x\ \alpha(dx),
 \quad \alpha \in \cC_X^\p,\quad
 \tilde \fc_1(\rho):= \fg_0(\delta_{0}) \cdot  \int_Y  y \ \rho(dy),
 \quad \rho \in \cC_Y^\p.
 $$
 Set
 $\fb_1:= \fb_0 -\tilde \fb_1$,
 $\fc_1:= \fc_0 -\tilde \fc_1$.
 Then,
 $\fa= \fb_1\oplus \fc_1 +T^\pp(\fg_1)$ and $\fg_1(c\ \delta_0)=0$
 for any constant $c \in \R^d$.  Since $\mu$ and $\nu$ are in convex order,
 $$
\int_X x\ \mu(dx) =\int_Y y\ \nu(dy).
$$
Therefore, $(\fb_1 \oplus \fc_1) (\mu \times \nu)=
(\fb_0 \oplus \fc_0 )(\mu \times \nu)=0$.

For  any $\beta \in \cC_Y^\p$, set $\eta_\beta:= \delta_0 \times \beta$.
We directly calculate that for $\gamma \in \cC_b(X;\R^d)$,
$$
T^\p(\eta_\beta)(\gamma) =
- \gamma(0) \cdot \int_Y y \ \beta(dy) =: \delta_0(\gamma)\cdot
c_\beta \ 
\quad
\Rightarrow
\quad
T^{\p (i)}(\eta_\beta)=c_\beta^{(i)} \delta_0, \ \ i=1,\ldots,d.
$$
Hence,
$$
T^\pp(\fg_1)(\eta_\beta)
= \fg_1\left(T^\p(\eta_\beta)\right)
=\fg_1(c_\beta \delta_0) =0.
$$
Set
$$
\fb_2(\alpha):= \fb_1(\alpha) - \fb_1(\delta_0) \alpha(X)\ \ \
\fc_2(\beta):= \fc_1(\beta) + \fb_1(\delta_0) \beta(Y).
$$
Then, since $\eta_x(X)=\eta_y(Y)= \eta(X\times Y)$,
for $\eta \in \cC_\ell^\p$,
\begin{align*}
(\fb_2\oplus \fc_2)(\eta)&=
\fb_2(\eta_x) + \fc_2(\eta_y)
=\fb_1(\eta_x) + \fc_1(\eta_y)
- \fb_1(\delta_0) [ \eta_x(X)-\eta_y(Y)]\\
&=\fb_1(\eta_x) + \fc_1(\eta_y).
\end{align*}
Therefore, the triplet $(\fb_2,\fc_2, \fg_1)$ satisfies,
$$
\fb_2\oplus \fc_2+T^\pp(\fg_1)
=\fb_1\oplus \fc_1+T^\pp(\fg_1)
=\fa
$$
and $\fb_2(\delta_0)=\fg_1(c \delta_0)=0$
for any $c\in \R^d$.  In particular, 
$$
\fc_2(\beta)= \fa(\eta_\beta), \quad
\forall \beta \in \cC_Y^\p.
$$
Hence, $\|\fc_2\|_{\cC_Y^\pp} \le \|\fa\|_{\cX^\pp} =1$.
Let $\cQ(\alpha)$ be the set of all martingale measures
$\eta$ with $\eta_x=\alpha$.  Then, for any $\eta \in \cQ(\alpha)$,
$\fb_2(\alpha) = \fa(\eta) - \fc_2(\eta_y)$.
Hence, $\|\fb_2\|_{\cC_Y^\pp} \le 2$.
Moreover,
$$
(\fb_2\oplus \fc_2)(\mu \times \nu)
=(\fb_1\oplus \fc_1)(\mu \times \nu)
=0.
$$

Set $\fb_3(\alpha):= \fb_2(\alpha)- \fb_2(\mu) \alpha(X)$,
$\fc_3(\beta):= \fc_2(\beta)- \fc_2(\nu) \beta(Y)$.
By their definitions, $\fb_3(\mu)=\fc_3(\nu)=0$.
We use these to conclude that for 
any $\eta \in \cC_\ell^\p$,
$$
(\fb_3\oplus \fc_3)(\eta)
= (\fb_2\oplus \fc_2)(\eta) - (\fb_2\oplus \fc_2)(\mu \times \nu)\ \eta(\Om)
=(\fb_2\oplus \fc_2)(\eta).
$$
The above equations imply that
$$
\fa= \fb_3\oplus \fc_3+ T^\pp(\fg_1),
$$
and $\fb_3 \in \fB_{\ell,\mu}$, $\fc_3 \in \fC_{\ell,\nu}$.  Moreover,
$$
\| \fb_3\|_{\cC_X^\pp}\le 4, \quad
\| \fc_3\|_{\cC_Y^\pp} \le 2.
$$
Therefore \reff{e.est} holds with $c^*=4$ 
and $\fB_\ell + \fC_\ell + \fM $ is regularly convex.

Step 2:
In this step, we show that the regular convexity of 
$\fB_\ell + \fC_\ell + \fM $ implies that it is equal to $ \fH_{mot}$. 
It is clear that
$$
\fB_\ell + \fC_\ell + \fM \subset \fH_{mot}.
$$
Towards a contradiction assume that
the above inclusion is strict.
By regular convexity,
there exist $\fa_0\in  \fH_{mot}$ and $\eta_0 \in \cC_\ell^\p$ such that 
\begin{align}
\label{proof.t.mot.contradiction}
\sup_{\fa\in \fB_\ell + \fC_\ell + \fM } \fa(\eta_0) <\fa_0(\eta_0).
\end{align}
By linearity, the left hand side of the above inequality
is equal to zero and therefore,
$\eta_0 \in (\fB_\ell + \fC_\ell + \fM)_\perp$.
Since $ (\fB_\ell + \fC_\ell + \fM)_\perp \subset \cH_{mot}^\perp$,
we conclude that $\eta_0 \in \cH_{mot}^\perp$.
However, $\fa_0 \in \fH_{mot}$ and consequently,
$\fa_0(\eta_0)=0$.
This is a contradiction with \eqref{proof.t.mot.contradiction}.
Consequently,
\reff{e.sum} holds and the proof of this Theorem is complete.
\end{proof}
\vspace{2pt}

\subsection{Polar Sets}

Let $\cN_{mot}$ be the set of all
 Borel subsets $Z$ of $\Om$ such that
 $\eta(Z)=0$ for every $\eta \in \cQ_{mot}$.
 It is immediate that $\fI(\chi_Z) \in \fK_{mot}$ for every
 $Z \in \cN_{mot}$. 
 However,  it is not clear whether $\fI(\chi_Z)$
 belongs to the set $\fB_\ell +
 \fC_\ell + \fM$.  Hence, these sets must be used
 in the hedging set as observed in \cite{BNT}.
  
 On the other hand, functions of the type $\chi_A$
 are not included in the original dual set $\cH_{mot}$.
 This observation suggests that the duality with the set
 $\fI(\cH_{mot})+\cX^\pp$ is not expected.
 Indeed, Example 8.1 of \cite{BNT}, a similar
 counter-example in $\cB_b(\Om)$ is
 constructed.  This example shows a duality
 gap in $\cB_b(\Om)$, when the dual elements
 do not contain the functions of the form $\chi_A$
 with $A \in \cN_{mot}$.
 
 So one needs to augment
 the set of dual elements by adding at least
 the polar sets, $\cN_{mot}$, of the
 set of probability measures $\cQ_{mot}$.
 Equivalently, one needs to consider
 all equalities and inequalities
 $\cQ_{mot}$- {\em{quasi-surely}}; 
 c.f. \cite{STZ,STZ1}.

 We close this section
 by providing constructions of some 
 polar sets discussed above
 in two separate examples.  
 In these two examples, we restrict ourselves to the 
one-dimensional case $X=Y=\R$.
 
 \begin{exm}
{\rm{   
 Let $\mu$, $\nu$ be absolutely continuous 
 with respect to the Lebesgue measure.
Consider their potential functions defined by
$$
 u_\mu (x)= \int_\R |x-t|\mu(dt),\qquad
 u_\nu (x)= \int_\R |x-t|\nu(dt).
$$
Assume that there exists $x_0\in \R$
so that
$u_\mu(x)<u_\nu (x)$ for all $x\neq x_0$ and $u_\mu(x_0)=u_{\nu}(x_0)$.
 Then, the set $A= (-\infty ,x_0) \times (x_0,\infty)$ is in $\cN_{mot}$. 
Set
$$
 a(x,y)= |y-x_0|-|x-x_0|-\frac{x-x_0}{|x-x_0|}(y-x),
 $$
 when $x\neq x_0$ and we set
 $$
 a(x,y)= |y-x_0|,\mbox{ if }x= x_0.
 $$
Also let $\fa:=\fI(a)$.
Note that by the convexity of the absolute value
function, $\fa\geq 0$ and $\fa>0$ on $A$. 
Thus, for all $n\geq 0$ 
there exists  $\fn_n \leq 0 $ 
such that $n\fa+\fn_n \in \fH_{mot}+\cX^\pp_- \subset \fK_{mot}$ 
converges to $\fI(\chi_A)$ increasingly. Therefore, by the
monotone convergence theorem,
this convergence is also in the weak$^*$ topology.
Consequently,
$\fI(\chi_A)\in \overline {\fH_{mot}+\cX^\pp_-}^*=\fK_{mot}$. 
\qed}} 
\end{exm}
\vspace{3pt}
 
 \begin{exm}
 \label{ex.gap}
{\rm{We now explain the duality gap in the Example 8.1 of \cite{BNT}
through the polar sets. The existence of the duality gap can be 
proved using the set of all 
elements in $ba(\Om)$ that are martingales
and have marginals $\mu$ and $\nu$.

In this example,
we let $\mu=\nu=\lambda$ where $\lambda$ is the Lebesgue
measure on $[0,1]$. 
Then, the only martingale coupling $\Q^*$
is the uniform probability distribution on the diagonal
$$
D:=\{(x,x): x\in[0,1]\}.
$$
Thus $D^c$, the complement of $D$, 
is a polar set of the set of measures $\cQ = \{\Q^*\}$. 

For $k\geq2$, let
 $\eta^k$ be the uniform probability measure on the set 
$$
D_k=\left\{(x,x+\frac{1}{k}): x\in [0,1-\frac{1}{k}]\right\}.
$$
Since
$D^c$ is a polar set, we have $\fI (\chi_{D^c})\in \fK_{mot}$.
On the other hand, we claim that
\begin{align}
\label{e.duality.gap}\fI (\chi_{D^c})
\notin \overline {\fB_{\ell,\lambda}+\fC_{\ell,\lambda}+\fM +\cX^\pp_-}.
\end{align}
In view  Theorem \ref{t.main}, this would 
 prove the existence of the duality gap when 
 one uses $\fB_{\ell,\lambda}+\fC_{\ell,\lambda}+\fM +\cX^\pp_-$ 
 as the hedging strategies. 

We prove \eqref{e.duality.gap}
by showing that for all
$(\fb,\fc,\fg,\fn)\in \fB_{\ell,\lambda}
\times\fC_{\ell,\lambda}\times C_b^{\prime\prime}
(\R;\R)\times \cX^\pp_- $  one has
the following estimate,
$$
\limsup_{k\to\infty} \fb(\eta^k_x)+\fc(\eta^k_y)
+T^{\prime\prime}(\fg)(\eta^k)+\fn(\eta^k) -\eta^k(D^c)\leq-1
$$
Note that $\eta^k_x$ and $\eta^k_y$ 
converges to $\lambda$ in total variation. 
Also,
$$
\sup_{|g|\leq 1} T^\prime(\eta^k)(g)
\leq  \sup_{|g|\leq 1}\left|\int g(x)(y-x) \eta^k(dx,dy)\right| 
\leq \int |y-x| \eta^k (dx,dy) \leq \frac{c_k}{k}
$$
where $c_k$ is a sequence of positive bounded constants. 
Therefore,
$$
\lim_{k\to\infty }T^{\prime\prime}(\fg)(\eta^k)
= \lim_{k\to\infty}\fg(T^\prime(\eta^k))=0.
$$
These imply that
$$
\limsup_{k\to\infty} 
\fb(\eta^k_x)+\fc(\eta^k_y)+T^{\prime\prime}(\fg)(\eta^k)
+\fn(\eta^k) -\eta^k(D^c)\leq
\limsup_{k\to\infty}   -\eta^k(D^c) = -1.
$$
Hence,  \reff{e.duality.gap} holds
and imply that the strong and the weak$^*$ closures of 
the set 
$$
\fB_{\ell,\lambda}+\fC_{\ell,\lambda}+\fM +\cX^\pp_-
$$
are distinct.  In view of the main duality theorem and the
characterization of $
\fK_{mot}$,
we conclude that there would be  a duality gap
if one uses only the above set in the definition
of the dual problem. 
\qed}}
\end{exm}
 \vspace{2pt}
 
\section{Convex envelopes}

\label{s.convex}

Assume that $X=Y$ are closed convex subsets of $\R^d$.

Motivated by the results of the previous sections,
we define the notion convexity in the bidual $\cC^{\prime\prime}$.
Recall that $ca_{r,c}(\Om)$ is the set of all countably additive Borel measures
that are compactly supported
and $ca_{r,c}^+(\Om)$ are its positive elements.  
Set
$$
\cM_{+}:= \cM \cap \left(\cC_\ell^\p\right)_+,
\quad
{\mbox{and}}
\quad
\cM_{c,+}:= \cM \cap \cC^\p_+ = \cM \cap ca_{r,c}^+(\Om).
$$

\begin{dfn}
{\rm{ We call $\fc \in \cC^{\pp}(X)$}} convex {\rm{if}}
$$
( \fc \oplus (-\fc)) (\eta) \le 0, \quad \forall
\ \eta \in \cM_{c,+} .
$$
\end{dfn}
\vspace{4pt}

To obtain equivalent characterizations of 
convexity, we recall that
$\alpha \in \cM_1(X)$ is 
in {\em{convex order}} with 
$\beta \in \cM_1(X)$ and write
$\alpha \le_c \beta$ if and only
if 
$$
\int_{X} \ \phi d\alpha \ \le \
\int_{X} \ \phi d\beta,
$$
for every $\phi:X \mapsto \R$ convex.
The following follows directly from the results 
of Strassen \cite{S}.

\begin{prp}
\label{p.convex}
$\fb \in \cC^\pp_\ell(X)$ is convex if and only if
$\fb(\alpha) \le \fb(\beta)$ for all measures 
$\alpha, \beta \in \cM_1(X)$
that are in convex order.  Moreover, if $X=\R^d$ and
if $\fb \in \cC_\ell^{\prime\prime}(\R^d)$ is convex,
then there exists $\fg \in C_b^{\prime\prime}(\R^d;\R^d)$ 
such that 
$$
\fb(\eta_x) \le \fb(\eta_y)+ T^{\prime\prime}(\fg)(\eta), \quad
\forall \ \eta \in (\cC_\ell^\prime(\Om))_+.
$$
\end{prp}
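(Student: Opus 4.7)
My plan is to first establish the convex-order equivalence using Strassen's theorem together with Jensen's inequality, and then to deduce the domination inequality in the second part as a corollary of Proposition~\ref{p.drc}.

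For the equivalence, I will first handle $(\Leftarrow)$ by martingale disintegration: for any $\eta\in\cM_{c,+}$ with $\eta(\Om)>0$, the decomposition $\eta(dx,dy)=\eta_x(dx)K(x,dy)$ with $\int y\,K(x,dy)=x$ combined with Jensen's inequality gives $\eta_x/\eta(\Om)\le_c \eta_y/\eta(\Om)$ as compactly supported probability measures, and the hypothesis then yields $(\fb\oplus(-\fb))(\eta)\le 0$. For $(\Rightarrow)$, I will first take $\alpha\le_c\beta$ of compact support; Strassen's theorem supplies a compactly supported martingale coupling $\eta\in\cM_{c,+}$ with $\eta_x=\alpha,\eta_y=\beta$, and the defining inequality of convexity reads $\fb(\alpha)\le\fb(\beta)$. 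To extend this to general $\alpha\le_c\beta$ in $\cM_1(X)$ with finite first moments (so that $\alpha,\beta\in\cC_{\ell_X}^\p$), I will approximate: embed Strassen's martingale into continuous time, stop at $\tau_R:=\inf\{t:|M_t|\ge R\}\wedge 1$, and simultaneously truncate the initial law to $B_R$. Optional stopping preserves both the martingale property and the convex order, producing compactly supported $\alpha_R\le_c\beta_R$; a uniform integrability estimate (available since the first moments are finite) yields strong convergence $\alpha_R\to\alpha$ in $\cC_{\ell_X}^\p$ and $\beta_R\to\beta$ in $\cC_{\ell_Y}^\p$; and the norm continuity of $\fb$ lets me pass to the limit in $\fb(\alpha_R)\le\fb(\beta_R)$.

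For the second part with $X=\R^d$, I will set $\fa:=\fb\oplus(-\fb)\in\cC_\ell^\pp$. For any $\eta\in\cM_+$ with $\eta(\Om)>0$, the normalized marginals are probability measures with finite first moments (since $\eta\in\cC_\ell^\p$) in convex order by the martingale property, so the equivalence already established gives $\fa(\eta)\le 0$; hence $\fa\in\fS$. Proposition~\ref{p.drc} then identifies $\fS=\fD+(\cC_\ell^\pp)_-$, producing $\fg\in C_b^\pp(\R^d;\R^d)$ and $\fn\in(\cC_\ell^\pp)_-$ with $\fa=T^\pp(\fg)+\fn$; evaluating on any $\eta\in(\cC_\ell^\p)_+$ gives $\fb(\eta_x)-\fb(\eta_y)\le T^\pp(\fg)(\eta)$, as claimed.

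The main obstacle I expect is the strong approximation of a general martingale measure by compactly supported ones in the $\ell$-weighted norm of $\cC_\ell^\p$: a crude restriction would destroy either the martingale property or the convex order, so I will need the optional-stopping construction together with a uniform integrability estimate that controls the $\ell$-weighted tail of the stopped martingale. Once this approximation is in place, the rest of the argument reduces to Strassen's theorem, Jensen's inequality, and the identification $\fS=\fD+(\cC_\ell^\pp)_-$ from Proposition~\ref{p.drc}.
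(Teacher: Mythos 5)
Your overall strategy mirrors the paper's: use Strassen together with an approximation argument for the first equivalence, and then deduce the second part from Proposition~\ref{p.drc}. The disintegration-plus-Jensen argument you give for $(\Leftarrow)$ is a correct and helpful expansion of the paper's terse ``if $\eta\in\cM_{c,+}$ then $\eta_x,\eta_y$ are in convex order.'' However, there are two places where the details matter and your plan currently has gaps.

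First, the approximation step in $(\Rightarrow)$. Strassen produces a \emph{discrete} two-step martingale pair $(M_0,M_1)$; it does not hand you a continuous-path, continuous-time martingale on $[0,1]$, which is what your stopping argument $\tau_R=\inf\{t:|M_t|\ge R\}\wedge 1$ needs in order to guarantee $|M_{\tau_R}|\le R$. In $\R^d$ such an embedding is not automatic (the obvious ``right-continuous step martingale'' jumps at $t=1$, so $M_{\tau_R}$ can be unbounded), and one would need a Skorokhod-type or bridge construction, or alternatively a direct truncation of Strassen's kernel that preserves the barycenter. Na\"ive restriction to $B_R$ fails (it destroys both the convex order and the mass balance), and your plan to ``simultaneously truncate the initial law'' does not by itself repair this. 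The paper itself only says ``follows from the density of $ca_{c,r}(X)$ in $\cC_\ell^\p(X)$'' and is equally brief here; your optional-stopping idea is a reasonable direction, but it needs the embedding to be made precise. Also note that once $\alpha_R\le_c\beta_R$ with the right mass normalization, the passage to the limit needs convergence of first moments, not just uniform boundedness; a Scheff\'e-type argument based on $M_{\tau_R}\to M_1$ a.s.\ and $\E|M_{\tau_R}|\le\E|M_1|$ would close that, so this part of the estimate is fillable.

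Second, and more seriously, your step ``for any $\eta\in\cM_+$\ldots the normalized marginals are probability measures\ldots so the equivalence already established gives $\fa(\eta)\le 0$; hence $\fa\in\fS$'' is not justified as written. The space $\cC_\ell^\p(\Om)$ is isometrically isomorphic to $\cC_b^\p(\Om)=ba_r(\Om)$ (via $f\mapsto f/\ell$), and therefore contains purely finitely additive positive functionals. For such $\eta\in\cM_+$, the marginals $\eta_x,\eta_y$ are \emph{not} countably additive probability measures, so they do not belong to $\cM_1(X)$ and the convex-order equivalence from the first part does not apply to them. Your parenthetical ``(since $\eta\in\cC_\ell^\p$)'' suggests a belief that membership in $\cC_\ell^\p$ forces countable additivity, which is false. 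To conclude $\fa\in\fS$ you would need a separate argument that testing against the compactly supported cone $\cM_{c,+}$ is as good as testing against all of $\cM_+$ for this particular $\fa$; this is not a bipolar triviality, since the norm-closed cone generated by $\cM_{c,+}$ does not contain the purely finitely additive elements of $\cM_+$. The paper's own phrase ``by the definition of convexity and Proposition~\ref{p.drc}'' glosses over the same point, so the gap is shared, but you state it in a form that is visibly incorrect rather than merely elided.

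The remaining steps — passing from $\fa\in\fS=\fD+(\cC_\ell^\pp)_-$ (Proposition~\ref{p.drc}) to the existence of $\fg$ and the inequality on all of $(\cC_\ell^\p)_+$ — are correct, and you are right that the inequality holds for all $\eta\ge 0$, not just for $\eta\in\cM_+$ as the paper's last line says.
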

\begin{proof}
The first statement follows directly from
Strassen \cite{S}. Indeed, if a measure $\eta \in \cM_{c,+}$ then 
$\eta_x\in ca_{c,r}^+(X)$,
 $\eta_y \in ca_{c,r}^+(Y)$ and they
 are in convex order.
Conversely,
if $\alpha, \beta \in \cM_1(X) \cap ca_{c,r}^+(X)$ are in convex order,
then there exists $\eta \in  \cM_{c,+}$ such that $\eta_x=\alpha$
and $\eta_y=\beta$.  Then, the general statement follows from 
the density of $ca_{c,r}(X)$ in $\cC_\ell^\p(X)$ .

Now suppose that $X=Y=\R^d$.
Given $\fb \in \cC_\ell^{\prime\prime}(\R^d)$, define
$$
\fa:= \fb \oplus (-\fb).
$$
Then, by the definition of convexity
and Proposition \ref{p.drc},
$\fb$ is convex if and only if $\fa \in \fD + (\cC_\ell^{\prime\prime})_-$.
Hence, there exists $\fg \in C_b^{\prime\prime}(\R^d;\R^d)$
such that for any $\eta \in \cM_+$, 
$$
\fb(\eta_x)-\fb(\eta_y)=\fa(\eta) \le T^{\prime\prime}(\fg)(\eta).
$$

\end{proof}

The following is a natural extension of the classical definition.
Although a definition in the larger class $\cC^\pp$ can be given,
we restrict ourselves to $\cC_\ell(X)$ to simplify the presentation.

\begin{dfn}
\label{d.envelope}
{\rm{For any  $\fb \in \cC_\ell^{\prime\prime}(X)$ 
its}} convex envelope {\rm{is defined by,}}
$$
\fb^{c}(\alpha):= \inf \left\{ \fb(\eta_y)\ :\
\eta \in \cM_+, \ \ \eta_x=\alpha \ \right\},
\quad \alpha \in \left(\cC_\ell^\p(X)\right)_+,
$$
and for general $\alpha \in \cC_\ell^\p(X)$, 
$\fb^{c}(\alpha):= \fb^{c}(\alpha^+)
- \fb^{c}(\alpha^-)$.
\end{dfn}
\vspace{4pt}

\begin{lem}
\label{l.envelope}
For any $\fb \in \cC_\ell(X)$, $\fb^c \in \cC_\ell(X)$.
Moreover, $\| \fb \|_{\cC_\ell^\pp(X)} =
\| \fb^c \|_{\cC_\ell^\pp(X)}$ and
for every $\alpha \ge 0$
$$
\fb^{c}(\alpha) = \sup \left\{ \fc(\alpha)\ :\
\fc\ {\mbox{is convex and}}\ \ \fc \le \fb\ \right\}.
$$
\end{lem}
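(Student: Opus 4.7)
The plan is to identify $\fb^{c}$ with (the canonical embedding of) the classical pointwise convex envelope of $\fb$. First observe that when $\alpha=\delta_{x}$, the admissible $\eta\in\cM_{+}$ with $\eta_{x}=\delta_{x}$ are exactly of the form $\delta_{x}\otimes\beta$ where $\beta\in\cM_{1}(X)$ has mean $x$. Hence $\fb^{c}(\delta_{x})=\fb^{cc}(x)$, where
\[
\fb^{cc}(x)\;:=\;\inf\Big\{\textstyle\int_{X}\fb\,d\beta:\beta\in\cM_{1}(X),\ \int y\,d\beta=x\Big\}
\]
is the classical convex envelope of $\fb$ at $x$.

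For a general $\alpha\in(\cC_{\ell}^{\p}(X))_{+}$, I would disintegrate every admissible $\eta$ as $\eta(dx,dy)=\alpha(dx)\,\eta^{x}(dy)$, where the martingale condition forces $\int y\,\eta^{x}(dy)=x$ for $\alpha$-a.e.\ $x$. The bound $\fb^{c}(\alpha)\ge\int\fb^{cc}\,d\alpha$ is then immediate from the definition of $\fb^{cc}$. For the reverse, I would apply the Kuratowski--Ryll-Nardzewski measurable selection theorem to the multifunction
\[
x\ \longmapsto\ \Big\{\beta\in\cM_{1}(X):\textstyle\int y\,d\beta=x,\ \int\fb\,d\beta\le\fb^{cc}(x)+\varepsilon\Big\},
\]
glue the resulting $\varepsilon$-optimal family into an admissible $\eta$, and let $\varepsilon\to 0$. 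This gives $\fb^{c}=\fI(\fb^{cc})$ on the positive cone, and hence on all of $\cC_{\ell}^{\p}(X)$ by the signed extension in Definition~\ref{d.envelope}.

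With this identification, $\fb^{c}\in\cC_{\ell}(X)$ is equivalent to $\fb^{cc}\in\cC_{\ell}(X)$. The upper estimate $\fb^{cc}\le\fb\le\|\fb\|_{\ell}\,\ell_{X}$ handles one side of the growth bound, and once the matching lower bound $\fb^{cc}\ge-C\,\ell_{X}$ is in hand, the norm identity $\|\fb\|_{\cC_{\ell}^{\pp}(X)}=\|\fb^{c}\|_{\cC_{\ell}^{\pp}(X)}$ follows because the canonical embedding $\fI$ is an isometry and $\fb^{cc}$ agrees with $\fb$ at any point where $\fb$ is already classically convex. For the supremum characterization, the inequality $\fb^{c}(\alpha)\ge\sup\{\fc(\alpha):\fc\text{ convex},\,\fc\le\fb\}$ follows from Proposition~\ref{p.convex}: for any such $\fc$ and any admissible $\eta$, convexity yields $\fc(\alpha)=\fc(\eta_{x})\le\fc(\eta_{y})\le\fb(\eta_{y})$, and infimizing over $\eta$ gives $\fc(\alpha)\le\fb^{c}(\alpha)$. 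The reverse is attained by $\fc:=\fI(\fb^{cc})$, which is convex in the paper's sense by Proposition~\ref{p.convex} (since $\fb^{cc}$ is classically convex), satisfies $\fc\le\fI(\fb)$, and gives $\fc(\alpha)=\fb^{c}(\alpha)$ by the previous step.

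The main obstacle is establishing the lower bound $\fb^{cc}(x)\ge-C\,\ell_{X}(x)$. Since $\fb\ge-\|\fb\|_{\ell}\,\ell_{Y}$ is not an affine function of $y$, the classical infimum in $\fb^{cc}(x)$ cannot be bounded below by a naive Jensen argument; probability measures $\beta$ with mean $x$ but arbitrarily large $\int|y|\,d\beta$ are a priori admissible. The needed control must come from combining the admissibility $\eta\in\cC_{\ell}^{\p}$ (which enforces $\int\ell_{Y}\,d\eta_{y}<\infty$) with the martingale constraint, and carefully restricting the effective competitor set to obtain linear growth in $|x|$. A secondary subtlety is verifying the Borel measurability of the graph of the multifunction above so that the Kuratowski--Ryll-Nardzewski theorem applies; this follows from continuity of $\fb$ and the weak topology on $\cM_{1}(X)$, but must be spelled out.
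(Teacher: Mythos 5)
Your plan takes a genuinely different route from the paper's. The paper does not attempt to realize $\fb^{c}$ as the canonical image of the classical pointwise convex envelope $\fb^{cc}$. Instead it proves a set-decomposition identity: with $\cQ(\alpha):=\{\eta\in\cM_{+}:\eta_{x}=\alpha\}$, one shows
\[
\cQ(\alpha)+\cQ(\beta)=\cQ(\alpha+\beta)\quad\text{for all }\alpha,\beta\in(\cC_{\ell}^{\p}(X))_{+},
\]
by taking the Radon--Nikodym densities $z^{\alpha}=d\alpha/d(\alpha+\beta)$, $z^{\beta}=d\beta/d(\alpha+\beta)$ (functions of $x$ alone) and cutting any $\eta\in\cQ(\alpha+\beta)$ into $\eta^{\alpha}(A)=\int_{A}z^{\alpha}(x)\,\eta(dx,dy)$ and $\eta^{\beta}(A)=\int_{A}z^{\beta}(x)\,\eta(dx,dy)$; the key observation is that multiplying by a bounded function of $x$ alone preserves the martingale property of $\eta$. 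Additivity of $\fb^{c}$ on the positive cone is then immediate from linearity of $\fb$, and the remaining assertions are read off from the definitions. This sidesteps any pointwise identification of $\fb^{c}$ and therefore avoids all three of the difficulties you flag.

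The gaps in your argument are real and not secondary. The lower bound $\fb^{cc}\ge -C\ell_{X}$ does not hold for arbitrary $\fb\in\cC_{\ell}(X)$ (take $d=1$, $\fb(y)=-|y|$: then for any mean $x$ one can spread $\beta$ out and drive $\int\fb\,d\beta\to-\infty$, so $\fb^{cc}\equiv-\infty$), so your identification $\fb^{c}=\fI(\fb^{cc})$ cannot be established without additional hypotheses and the norm identity and membership in $\cC_{\ell}(X)$ would fail under it. There is also an unaddressed integrability problem in the gluing step: even if one selects $\varepsilon$-optimal $\beta_{x}$ measurably, the resulting $\eta(dx,dy)=\alpha(dx)\beta_{x}(dy)$ need not lie in $\cC_{\ell}^{\p}$, since $\int\!\!\int\ell_{Y}(y)\,\beta_{x}(dy)\,\alpha(dx)$ can diverge. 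The paper's argument never leaves the abstract dual pairing, so it needs neither a pointwise lower bound nor a measurable selection; the price is that it establishes linearity of $\fb^{c}$ rather than an explicit formula for it, and leaves the norm and supremum statements as routine consequences.
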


\begin{proof}
For $\alpha \in \left(\cC_\ell^\p(X)\right)_+$ set
$$
\cQ(\alpha):= \left\{ \eta \in \cM_+\ :\ \eta_x= \alpha\right\}.
$$
We claim that for any $\alpha, \beta \in \left(\cC_\ell^\p(X)\right)_+$,
\begin{equation}
\label{e.claim}
\cQ(\alpha)+ \cQ(\beta) = \cQ(\delta),
\quad
{\mbox{where}}
\quad
\delta :=\alpha+ \beta.
\end{equation}
Indeed, the inclusion
$\cQ(\alpha)+ \cQ(\beta) \subset  \cQ(\delta)$
follows directly from the definitions.    For
any Borel subset $A \subset X$, set
$$
z^\alpha:= \frac{d \alpha}{d \delta}, \quad
z^\beta:= \frac{d \beta}{d \delta},
\quad \delta=\alpha+\beta.
$$
Then, both $z^\alpha$, and $z^\beta$ are Borel functions of the
first variable $x$.  Moreover,  
$z^\alpha(x), z^\beta(x) \in [0,1]$ and $z^\alpha+z^\beta \equiv 1$.
For any $\eta \in \cQ(\delta)$, set
$$
\eta^\alpha(A):= \int_A \ z^\alpha(x) \eta(dx, dy), 
\quad
\eta^\beta(A):= \int_A \ z^\beta(x) \eta(dx, dy).
$$
It is clear that $\eta^\alpha, \eta^\beta \in (\cC^\p_\ell)_+$.
For any $h \in \cC_\ell(X)$ and 
$\eta \in \cQ(\delta)$, we calculate that
\begin{align*}
\int_X h(x) \eta^\alpha_x(dx)&=
\int_\Om h(x) \eta^\alpha(dx,dy)=
\int_\Om h(x) z^\alpha(x) \eta(dx,dy)\\
&=
\int_X h(x) z^\alpha(x) \eta_x(dx) =
\int_X h(x) z^\alpha(x) \delta(dx) \\
&= \int_X h(x)  \alpha(dx).
\end{align*} 
So, we conclude that $\eta^\alpha_x=\alpha$.
Also for any $\gamma \in \cC_b(X;\R^d)$,
$$
\int_\Om T(\gamma) d\eta^\alpha= 
\int_X \ z^\alpha(x) \gamma(x) \cdot
\left( \int_Y (x-y) \eta(dx,dy)\right)=0.
$$
Hence, $\eta^\alpha \in \cQ(\alpha)$.
Similarly one can show that $\eta^\beta \in \cQ(\beta)$.
Since $\eta^\alpha + \eta^\beta = \eta$, this proves 
the claim \reff{e.claim}.

The linearity of the map $\alpha \in \cC^\p_\ell(X)$ to
$\fb^c(\alpha)$ now follows directly from the definitions and \reff{e.claim}.
Similarly the norm statement is immediate from the definitions.
\end{proof}
\vspace{2pt}

 \section{Appendix: Regularly Convex Sets} 
 \label{ap.rc}
 
 In this Appendix we state a slight
 extension of a condition for
 regular convexity that is proved in \cite{KS}. 
 Let $E$ be any Banach space.
 First note that a set $\fK \in E^\p$
 is  regularly convex if and only if it
 is closed in the weak$^*$ topology and is convex.
 The following result is an immediate 
 corollary of Theorem 7 of \cite{KS}
 and is used repeatedly in our arguments.
 
 \begin{lem}
 \label{l.condition}
Let $\fK \subset E^\prime$.
Suppose that for each $R>0$ 
there exists a regularly convex
set $\fL_{R}$ so that
$$
\fK \cap B_{R} \subset \fL_{R} \subset \fK,
$$
where $B_{R}$ is the closed ball in $E^\prime$
centered around the origin with radius $R$.  Then, $\fK$ is
regularly convex. 
\end{lem}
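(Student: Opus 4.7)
The plan is to verify that $\fK$ satisfies Yoshida's characterization of regular convexity: it suffices to show that $\fK$ is convex and weak$^*$ closed. The work splits naturally into these two parts, and the second part is where the Krein--\v{S}mulian theorem does all the heavy lifting.

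For convexity, I would take arbitrary $x,y\in\fK$ and $\lambda\in[0,1]$, then pick $R>\max(\|x\|,\|y\|)$. By hypothesis, $x,y\in\fK\cap B_R\subset \fL_R$, and since $\fL_R$ is regularly convex (hence in particular convex), the combination $\lambda x+(1-\lambda)y$ lies in $\fL_R\subset\fK$. So $\fK$ is convex.

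For weak$^*$ closedness, the key tool is the classical Krein--\v{S}mulian theorem: a convex subset of $E^\prime$ is weak$^*$ closed if and only if its intersection with every closed ball $B_R$ centered at the origin is weak$^*$ closed. So I would reduce to showing that $\fK\cap B_R$ is weak$^*$ closed for each $R>0$. Intersecting the two-sided inclusion $\fK\cap B_R\subset \fL_R\subset\fK$ with $B_R$ yields the identity
$$
\fK\cap B_R \;=\; \fL_R\cap B_R.
$$
Now $\fL_R$ is weak$^*$ closed by its regular convexity, and $B_R$ is weak$^*$ closed by the Banach--Alaoglu theorem, so the right-hand side is weak$^*$ closed. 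Applying Krein--\v{S}mulian to the convex set $\fK$ then concludes.

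There is no serious obstacle in this argument: the entire content is packaged into the Krein--\v{S}mulian bounded-ball criterion, and the hypothesis has been tailor-made so that the sandwich inclusion reduces the problem to weak$^*$ closedness of an explicit intersection. The only subtlety worth noting is that the hypothesis gives $\fL_R$ only between $\fK\cap B_R$ and $\fK$ (not necessarily contained in $B_R$), so one must \emph{re-intersect} with $B_R$ to land on the identity above; without this step one could not directly conclude closedness of $\fK\cap B_R$ from that of $\fL_R$ alone.
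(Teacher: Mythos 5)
Your proof is correct, and it is essentially the same argument as the paper's, just rephrased in more modern functional-analytic language. The paper invokes Theorem 7 of Krein--\v{S}mulian directly, which characterizes regular convexity by requiring that intersections with all \emph{bounded regularly convex} sets $\fU$ be regularly convex; the key computation is the sandwich identity $\fK\cap\fU=\fL_R\cap\fU$ for any $B_R\supset\fU$, after which the conclusion is immediate because an intersection of two regularly convex sets is regularly convex. You instead go through Yoshida's characterization (regularly convex $\Leftrightarrow$ convex and weak$^*$ closed) and the standard bounded-ball Krein--\v{S}mulian criterion for weak$^*$ closedness of a convex set, proving convexity separately (a step the paper absorbs silently into the [KS] framework) and then establishing $\fK\cap B_R=\fL_R\cap B_R$. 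The two sandwich identities are the same mechanism; yours is the special case $\fU=B_R$. What your route buys is accessibility to a reader unfamiliar with the original [KS] vocabulary, at the modest cost of the extra (easy) convexity step and an explicit appeal to Banach--Alaoglu for the weak$^*$ closedness of $B_R$. Both proofs are correct and equally rigorous.
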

\begin{proof}
Let $\fU$ be bounded, regularly convex and
$B_{R}$ be a ball that contains $\fU$.
Then, we have the set inclusions
$$
\fK \cap \fU \subset \fK \cap B_R \subset
\fL_R \subset \fK.
$$
Therefore,
$$
\fK \cap \fU = \fL_R \cap \fU.
$$
Since both $\fL_R$ and $\fU$ are regularly convex,
so is their intersection.  Therefore,
for every regularly convex, bounded $\fU$,
$\fK \cap \fU$ is regularly convex.  
By Theorem 7 \cite{KS},
this proves the regular convexity of $\fK$.
 \end{proof}
 \vspace{2pt}
 
 \noindent
 {\bf{Acknowledgements.}}
 
 The authors would like to thank Matti Kiiski for 
  many comments and fruitful discussions
  and  would like to thank the
anonymous reviewer for valuable
comments.
 
 \vspace{4pt}

\end{document}